\documentclass[10pt]{amsart}

\input{GG1.sty}

\title[Homotopy types of $\SU(n)$-gauge groups over spheres]{Lower bounds on the number of homotopy types of $\SU(n)$-gauge groups over spheres}
\author{Martin Frankland and Yang Hu}
\address{Department of Mathematics and Statistics, University of Regina}
\email{Martin.Frankland@uregina.ca}
\address{Department of Mathematics and Statistics, University of Regina}
\email{Yang.Hu@uregina.ca} 
\date{}         

\subjclass[2020]{Primary 55P15; Secondary 55Q15}
\keywords{homotopy types, gauge groups, unitary groups, principal bundles, Samelson product} 

\begin{document}

\begin{abstract}
We explore the classical question of enumerating the homotopy types of gauge groups of principal $\SU(n)$-bundles over $\Smr^{2m+1}$ in the unstable range $m\geq n$. Using computations of certain $p$-local Samelson products, we obtain lower bounds on the number of such homotopy types. We also present a discussion of the last stable case.
\end{abstract}

\maketitle

\tableofcontents

\section{Introduction} \label{sec:intro}

Gauge groups are fundamental structures studied across mathematics and physics. 
Given a compact connected Lie group $\G$, and a principal $\G$-bundle $\sfP$ over a connected finite CW complex $X$, the gauge group $\Gcal_{\sfP}$ of $\sfP$ is defined as the group of $\G$-equivariant bundle automorphisms. 

\begin{q} \label[q]{main_q}
How many homotopy types of $\Gcal_{\sfP}$ are there, as $\sfP$ ranges over all principal $\G$-bundles over a fixed base complex $X$?
\end{q}

The first step towards answering \Cref{main_q} is to enumerate the principal bundles. That is, to compute
\[
\Prin_{\G}(X) \cong [X, \BG],
\]
where $\Prin_{\G}(X)$ denotes the set of isomorphism classes of principal $\G$-bundles over $X$, before enumerating the homotopy types of $\Gcal_{\sfP}$, $\sfP\in \Prin_{\G}(X)$.
In general, taking $\Gcal_{(-)}$ compresses information, as non-isomorphic bundles can give rise to homotopy equivalent gauge groups. 
Indeed, Crabb and Sutherland \cite{CS00} prove that the answer to \Cref{main_q} is always a finite number, 
although $\Prin_{\G}(X)$ can be an infinite set.

\smallskip

This general discrepancy between the two enumeration questions is well demonstrated in the cases extensively studied in literature, namely when the group $\G$ is a classical Lie group $\SU(n)$ or $\text{Sp}(n)$, and $X$ is a 4-manifold (see \cite{So19} for a list of such results) or a higher-dimensional sphere.
For example, for the principal $\SU(3)$-bundles over $\Smr^6$, their associated rank $3$ complex vector bundles are in the stable range, and are therefore classified as:
\[
\Prin_{\SU(3)}(\Smr^6) \, \cong \, [\Smr^6, \BSU(3)] \, \cong \, \KUt^0(\Smr^6) \, \cong \, \ZZ.
\]
Furthermore, the third Chern class $c_3\in 2\ZZ \subset \ZZ \cong \Hmr^6(\Smr^6; \ZZ)$ completely determines these bundles. 
Denote by $\Gcal_k$ the gauge group of the bundle $\sfP$ with $c_3(\sfP) = 2k$, $k\in \ZZ$. Hamanaka and Kono \cite{HK07} prove that $\Gcal_k \simeq \Gcal_{k'}$ precisely when $\gcd(120, k) = \gcd(120, k')$. So there are exactly $16$ distinct homotopy types of gauge groups, although there exist a $\ZZ$-worth of principal bundles.

\smallskip

For general $\SU(n)$-gauge groups over spheres $\Smr^r$, obtaining a complete answer to \Cref{main_q} becomes significantly harder as $n$ and $r$ become larger. When $r \leq 2n$,  rank $n$ complex vector bundles over $\Smr^r$ are stable and are uniquely determined by the Chern class $c_n$, which is of the form $(n-1)! \cdot k$, $k\in \ZZ$. In these cases, some lower bounds on the number of homotopy types of gauge groups are present. For example, for $\SU(4)$-gauge groups over $\Smr^8$, Mohammadi and Asadi-Golmankhaneh \cite{MAG19} prove that if $\Gcal_k\simeq \Gcal_{k'}$ then $\gcd(420, k) = \gcd(420, k')$. In contrast, very little information is known in the unstable range $r>2n$.

\smallskip

This paper is aimed at producing systematic lower bounds for the number of homotopy types of $\SU(n)$-gauge groups over general spheres $\Smr^{r}$, both in the last stable case $r=2n$ (\Cref{sec:sun}) and in a number of unstable cases $r=2m+1$, $m\geq n$ (\Cref{sec:unstable}). 

\smallskip

In the last stable case $r=2n$, we follow the classical methods.
Write $\sfP_k$ for the principal $\SU(n)$-bundle over $\Smr^{2n}$ with $c_n = (n-1)!\cdot k$, $k\in \ZZ$. Its classifying map is $k\epsilon: \Smr^{2n} \rightarrow \BSU(n)$, where $\epsilon$ denotes the standard generator of $\pi_{2n}\BSU(n) \cong \pi_{2n-1}\SU(n)\cong \ZZ$. Write $\G_k$ for the gauge group of $\sfP_k$.
By the work of Gottlieb \cite{Go72}, or that of Atiyah and Bott \cite{AB83}, the classifying space $\B\Gcal_k$ fits into a fiber sequence
\[
\begin{tikzcd}
\Gcal_k \ar[r] & \SU(n) \ar[r, "\partial_k"'] & \Omega_0^{2n-1}\SU(n) \ar[r] & \B\Gcal_k \ar[r] & \BSU(n).
\end{tikzcd}
\]
By a result of Lang \cite{La73}, the map $\partial_k$ in the sequence is adjoint to the Samelson product
\[
\langle k\epsilon, 1_{\SU(n)} \rangle:
\begin{tikzcd}
 \Smr^{2n-1} \sma \SU(n) \ar[rr, "k\epsilon \sma 1_{\SU(n)}"'] & &  \SU(n)\sma \SU(n) \ar[r, "c"'] & \SU(n)
\end{tikzcd}
\] 
where $c$ denotes the map taking the commutator. Using methods developed by Hamanaka and Kono \cite{HK07}, in \Cref{sec:sun} we prove:
\begin{thm}[See \Cref{thm:lowerboundnodd} and \Cref{thm:lowerboundneven}] \label{thm:mainstable}
If $n$ is odd, then $\Gcal_k\simeq \Gcal_{\ell}$ implies that
\[
\gcd(2n(n+1)(n+2), \, k) = \gcd(2n(n+1)(n+2), \, \ell).
\]
If $n$ is even, then $\Gcal_k\simeq \Gcal_{\ell}$ implies that
\[
\begin{cases}
\gcd(n(n+1)(n+2)/2, \, k) = \gcd(n(n+1)(n+2)/2, \, \ell), & \text{ if } n \equiv 0 \text{ mod } 4 \\
\gcd(n(n+1)(n+2), \, k) = \gcd(n(n+1)(n+2), \, \ell), & \text{ if } n \equiv 2 \text{ mod } 4.
\end{cases}
\]
\end{thm}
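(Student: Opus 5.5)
The plan is to follow the Hamanaka--Kono method. From the fiber sequence
\[
\Gcal_k \to \SU(n) \xrightarrow{\partial_k} \Omega_0^{2n-1}\SU(n)
\]
we extract a homotopy invariant of $\Gcal_k$ that depends on $\gcd(N,k)$, where $N$ is the order of the Samelson product relevant to a suitable homotopy group. Throughout we use that $\partial_k = k\partial_1$, by linearity of the Samelson product in the first variable (Lang's identification).

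\textbf{Step 1: choose a detecting homotopy group.} Take the piece of the long exact sequence
\[
\pi_{2n}\SU(n) \xrightarrow{(\partial_k)_*} \pi_{2n}\Omega^{2n-1}_0\SU(n)=\pi_{4n-1}\SU(n) \to \pi_{2n-1}\Gcal_k \to \pi_{2n-1}\SU(n)\xrightarrow{(\partial_k)_*} \pi_{4n-2}\SU(n).
\]
A cleaner choice is to restrict $\partial_1$ to the bottom cells of $\SU(n)$, that is, to $\Sigma\CP^{n-1}$ or to $S^3$. The natural candidate is the Samelson product $\langle \epsilon, \epsilon_i\rangle \in \pi_{2n-1+2i-1}\SU(n)$ with the generator $\epsilon_i$ of $\pi_{2i-1}\SU(n)$. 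We use the cases $i=2$ (and $i=3$), landing in $\pi_{2n+2}\SU(n)$ or $\pi_{2n+4}\SU(n)$. These groups are known: $\pi_{2n+2}\SU(n)\cong \ZZ/(n+1)!\,\cdot$(a factor depending on parity), and $\pi_{2n+4}\SU(n)$ likewise, by Kervaire's and Matsunaga's computations.

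\textbf{Step 2: compute the orders of these Samelson products.} We use Bott's classical result that $\langle\epsilon_i,\epsilon_j\rangle$ in $\pi_{2i+2j-1}\U(i+j-1)$ has order $(i+j-1)!/((i-1)!(j-1)!)$. We then push forward along $\U(n)\to\SU(n)$, or rather work in $\SU(n)$ via the unstable groups just above the stable range.
\begin{itemize}
\item For $j=2$, this gives order $n(n+1)$ up to a factor of $2$.
\item Combining with $j=3$, which gives $n(n+1)(n+2)/2$, and with the $2$-primary information in $\pi_{2n+4}\SU(n)$ (which contains a $\ZZ/2$ summand depending on the parity of $n$), the order of $\partial_1$ restricted to the relevant skeleton of $\SU(n)$ becomes $N=2n(n+1)(n+2)$, $n(n+1)(n+2)/2$, or $n(n+1)(n+2)$ in the three cases.
\end{itemize}
The factor of $2$ is where the cases $n$ odd, $n\equiv 0$ and $n\equiv 2 \bmod 4$ separate.

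\textbf{Step 3: turn the order into a homotopy invariant.} Since $(\partial_k)_*=k(\partial_1)_*$ on a cyclic group $\ZZ/N$ generated by the image of $\partial_1$, the cokernel, or equivalently the order of the relevant torsion subgroup of $\pi_*\Gcal_k$ (or of $\pi_*\B\Gcal_k$), is $\ZZ/\gcd(N,k)$ extended by fixed groups independent of $k$. A homotopy equivalence $\Gcal_k\simeq\Gcal_\ell$ therefore forces the orders to agree, which gives $\gcd(N,k)=\gcd(N,\ell)$. To avoid extension ambiguities, argue prime by prime: localize at each $p\mid N$ and compare the $p$-parts of the order of a single homotopy group. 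Alternatively, take a homotopy group where the neighbouring terms $\pi_*\SU(n)$ are $0$ or torsion-free, so that the invariant is exactly $|\ZZ/\gcd(N,k)|$ times a constant.

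\textbf{Expected obstacle.} The main obstacle is Step 2 at the prime $2$ (and at $p=3$ for small $n$). Bott's order lives in the unitary group, and passing to $\SU(n)$ in the metastable dimension $2n+4$ requires identifying the Samelson product as a multiple of the generator of $\pi_{2n+4}\SU(n)$, including the $\ZZ/2$ summand. Here I would first try Hamanaka--Kono's approach: detect the Samelson product via the $e$-invariant, or via the Chern character on the cofiber $\Sigma^{2n-1}\Sigma\CP^{n-1}\to\SU(n)$, or on the quasi-projective space. This reduces the computation to divisibility of explicit integers built from binomial coefficients, with the answer depending on $n \bmod 4$.
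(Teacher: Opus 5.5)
You have a genuine gap at the prime $2$. Steps 1 and 3 detect $\Gcal_k$ through homotopy groups, that is, through the test complexes $\Smr^3$ and $\Smr^5$. Let $a$ and $b$ be the orders of $\langle\epsilon,\epsilon_2\rangle\in\pi_{2n+2}\SU(n)$ and $\langle\epsilon,\epsilon_3\rangle\in\pi_{2n+4}\SU(n)$. These invariants give only $\gcd(a,k)=\gcd(a,\ell)$ and $\gcd(b,k)=\gcd(b,\ell)$, hence at best $\gcd(\operatorname{lcm}(a,b),k)=\gcd(\operatorname{lcm}(a,b),\ell)$, and that is too weak.

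Already for $n=3$ this fails: $\pi_8\SU(3)\cong\ZZ/12$ and $\pi_{10}\SU(3)\cong\ZZ/30$, so you cannot get past $\gcd(60,k)=\gcd(60,\ell)$, whereas the statement asserts $\gcd(120,k)=\gcd(120,\ell)$.

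In general, $\pi_{2n+5}\Smr^{2n+1}=0$ and $\pi_{2n+5}\Smr^{2n+3}\cong\ZZ/2$ show that $\pi_{2n+4}\SU(n)\to\pi_{2n+4}\SU(n+2)$ has kernel of order at most $2$. The kernel is trivial for $n$ even, since then $\pi_{2n+4}\SU(n+1)\cong\ZZ/\frac{(n+2)!}{2}$ injects into $\ZZ/(n+2)!$. Combined with Bott's formula, this gives $b\mid n(n+1)(n+2)$, with $b=n(n+1)(n+2)/2$ for $n$ even. Similarly, $a=n(n+1)$ for $n$ odd and $a\mid 2n(n+1)$ for $n$ even. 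So the $2$-adic valuation of $\operatorname{lcm}(a,b)$ is one less than that of $2n(n+1)(n+2)$ when $n$ is odd, and one less than that of $n(n+1)(n+2)$ when $n\equiv 2 \bmod 4$. Only for $n\equiv 0 \bmod 4$ does your route work; there $\Smr^5$ together with Bott's formula already gives the bound. The values of $N$ asserted in Step 2 are therefore not produced by the data you propose to use. They are the actual content of the theorem, and you leave them uncomputed (your ``expected obstacle'').

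The missing idea is that the extra factor of $2$ comes from the $\eta$-attachment between the $3$- and $5$-cells of $\Sigma\CP^2$. The test complex must therefore be $\Sigma\CP^2$ itself, and the invariant is $|[\Sigma\CP^2,\B\Gcal_k]|$, the order of the cokernel of $(\partial_k)_*\colon\KUt^0(\CP^2)\to[\Sigma^{2n}\CP^2,\SU(n)]$. This image is generated by $k\langle\epsilon,\bar x\rangle$ and $k\langle\epsilon,\bar x'\rangle$ and is not cyclic. So your Step 3 model, ``$\ZZ/\gcd(N,k)$ extended by fixed groups'', must be replaced by a computation of both invariant factors of $\Im(\partial_1)_*$.

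The paper does this in several steps:
\begin{enumerate}
\item It embeds $[\Sigma^{2n}\CP^2,\SU(n)]$ into $[\Sigma^{2n}\CP^2,\U(n+1)]$. This is injective because $[\Sigma^{2n+1}\CP^2,\Smr^{2n+1}]=0$, which is exactly the vanishing that fails for $\Smr^{2n+2}$, since $\pi_{2n+3}\Smr^{2n+1}\neq 0$.
\item It presents $[\Sigma^{2n}\CP^2,\U(n+1)]$ as the cokernel of $(\Omega\pi)_*$ in $[\Sigma^{2n}\CP^2,\Omega\V_{n+1}]$. The classes $(a_{2n+2},a_{2n+4})$ identify the latter with a parity-dependent sublattice of $\ZZ^2$.
\item It computes $\Im(\Omega\pi)_*$ from $(\Omega\pi)^*a_{2n}=n!\,\ch_n$, and the lifted Samelson products from $\tilde c^*a_{2n}=\sum_{s+t=n-1}x_{2s+1}\otimes x_{2t+1}$.
\item It chooses a basis adapted simultaneously to $M_k$ and $\Im(\Omega\pi)_*$.
\end{enumerate}
The index then comes out as a product of two factors. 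One has the form $N/\gcd(N,k)$; the other has the form $A/\gcd(A,k)$ or $A/\gcd(A,2k)$ with $A\mid N$. Such a product determines $\gcd(N,k)$ prime by prime.
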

The same lower bound in the cases $n \equiv 0, 1, 3$ mod $4$ was obtained by S.\ Mohammadi \cite{Moh22}, using similar Hamanaka--Kono methods. However, \Cref{thm:mainstable} gives an improved lower bound in the case $n \equiv 2$ mod $4$.

\smallskip

The unstable situation is drastically different. For an $\SU(n)$-bundle over $\Smr^{r}$, $r > 2n$, the associated complex vector bundle is unstable. In particular,
\[
\Prin_{\SU(n)}(\Smr^{r}) \cong \pi_{r}\BSU(n) \cong \pi_{r-1}\SU(n),
\]
which is a {\emph{finite}} abelian group. Given that taking $\Gcal_{(-)}$ often compresses lots of information,  it is natural to wonder if these finitely many principal bundles become homotopically indistinguishable after passing to gauge groups.

\begin{q} \label[q]{q2}
Does the homotopy type of 
the gauge group of 
a non-trivial $\SU(n)$-bundle over $\Smr^{r}$, $r > 2n$, always coincide with that of the trivial bundle?
\end{q}

In \Cref{sec:unstable} we address \Cref{q2} for unstable $\SU(n)$-bundles over odd spheres $\Smr^{2m+1}$, $m\geq n$, and prove that the answer is generally no. Moreover, we prove:

\begin{thm}[See \Cref{count} and \Cref{thm:counting}] \label{thm:main02}
For a pair of integers $(n, j)$ with $n\geq 3$ and $j\geq 0$, denote by $\theta(n, j)$ the number of odd primes satisfying that $p\geq j+2$, that $p\leq n+1$, and that the value of $n$ mod $p$ belongs to the set $\{0, p-1, \cdots. p-j\}$.
There are at least $2^{\theta(n, j)}$ distinct homotopy types of gauge groups $\Gcal_{\sfP}$, as $\sfP$ ranges over the principal $\SU(n)$-bundles over $\Smr^{2n+2j+1}$.
\end{thm}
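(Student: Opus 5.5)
The plan is to work one odd prime at a time, localized at $p$, and then combine the primes by a product/counting argument. For each odd prime $p$ counted by $\theta(n,j)$, I will show that $\pi_{2n+2j}\SU(n)$ has nontrivial $p$-torsion. I will also show that some bundle whose classifying element is a generator $\alpha_p$ of that $p$-torsion has a gauge group that is not $p$-locally equivalent to the trivial gauge group $\SU(n)\times\Omega^{2n+2j+1}\SU(n)$. The group $\Prin_{\SU(n)}(\Smr^{2n+2j+1})\cong\pi_{2n+2j}\SU(n)$ is finite abelian, so for each subset $S$ of these primes we can form the class $\alpha_S=\sum_{p\in S}\alpha_p$. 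Localizing at $p$ kills the components at the other primes, so $\Gcal_{\alpha_S}$ localized at $p$ is equivalent to $\Gcal_{\alpha_p}$ if $p\in S$ and to the trivial gauge group otherwise. Hence the $2^{\theta(n,j)}$ subsets give pairwise non-equivalent gauge groups. This works because the homotopy type of $\Gcal_\alpha$ localized at $p$ depends only on the $p$-primary part of $\alpha$: by Lang's result, the boundary map $\partial_\alpha$ is adjoint to the Samelson product $\langle\alpha,1\rangle$, which is bilinear in $\alpha$.

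The key steps, in order, are as follows.

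\begin{enumerate}
\item \emph{Existence of torsion.} Under the hypotheses $j+2\le p\le n+1$ and $n\equiv 0,-1,\dots,-j \pmod p$, the classical computations of unstable homotopy of $\SU(n)$ (e.g.\ via the fibration $\SU(n)\to\SU(n+1)\to\Smr^{2n+1}$, or via the known $p$-local splitting of $\SU(n)$ for $p$ small relative to $n$ into $B$-type spaces) give $\pi_{2n+2j}\SU(n)_{(p)}\neq 0$. The element in question comes from $\alpha_1$-type classes on the relevant sphere factor: the degree $2n+2j$ differs from $2i-1$ by $2p-3$ for a suitable $i$ in the range $n-j\le i\le n$, and the congruence condition is exactly what places an odd sphere $\Smr^{2i-1}$ in a suitable position.
\item \emph{Nontriviality of the Samelson product.} Show that $\langle\alpha_p,1_{\SU(n)}\rangle$ is nontrivial $p$-locally. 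It suffices to restrict to a bottom cell or a low skeleton such as $\Sigma\CP^{k}$, or to a single sphere factor in the $p$-local decomposition. There I would compute the Samelson product of $\alpha_p$ with the inclusion of the relevant cell, using the known Bott-type formulas and Toda's $p$-primary computations, and detect it with a Steenrod operation $\mathcal{P}^1$ or through the $J$-homomorphism.
\item \emph{Distinguishing homotopy types.} Deduce that $\Gcal_{\alpha_p}$ is not $p$-locally equivalent to the trivial gauge group. The cleanest invariant is a homotopy group: compare $\pi_*\Gcal$ in a degree where the long exact sequence of $\Gcal\to\SU(n)\xrightarrow{\partial}\Omega^{2n+2j}_0\SU(n)$ has a boundary map that vanishes for the trivial bundle but is nonzero for $\alpha_p$, so that the orders of the $p$-torsion differ.
\end{enumerate}

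I expect step 2 to be the main obstacle: the nontriviality of the $p$-local Samelson product at the right cell. The conditions $p\ge j+2$ and $p\le n+1$ should be precisely what make the relevant composite land in a stable-range $\alpha_1$ that $\mathcal{P}^1$ detects. Step 3 also needs care: I must choose a homotopy group degree small enough that the lower cells do not interfere, and then check that the $p$-torsion orders actually differ.
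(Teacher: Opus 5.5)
Your architecture matches the paper's. You identify $\partial_\alpha$ with $\langle\alpha,1\rangle$ via Lang, compare one homotopy group of $\B\Gcal$ through the exact sequence, and assemble over primes by bilinearity. Your subset sums $\alpha_S=\sum_{p\in S}\alpha_p$ are a fine substitute for the paper's Chinese-remainder choice of $k\,\omega_{n+j}$.

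\textbf{The gap is step 2.} It is the whole content of the theorem, and you leave it as a list of tools. You never say which class $\alpha_p$ is paired with, in which group the product lands, or why it is nonzero. As a result, the hypotheses $n\equiv 0,-1,\dots,-j \pmod p$ and $j+2\le p\le n+1$ are never used. What is needed:
\begin{enumerate}
\item Define $N\in\{p-2,\dots,p-2+j\}$ by $n\equiv p-2-N\pmod p$ and set $i=N-j\le p-2$. Since $\pi_{2i+1}\BSU(n)=0$, the group $\pi_{2i+1}\B\Gcal_\alpha$ is the cokernel of $\epsilon\mapsto\langle\epsilon,\alpha\rangle$ from $\pi_{2i+1}\SU(n)$ to $\pi_{2n+2N+1}\SU(n)$.
\item Show $\pi_{2n+2N+1}\SU(n)_{(p)}\cong\ZZ/p$, generated by $\omega_{n+N-(p-2)}\circ\alpha_1$. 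The paper gets this from $\pi^s_{2n+2N+1}(\CP_n^{n+N+1})$, where the congruence is exactly what makes the cell in dimension $2n+2N-(2p-4)$ split off.
\item Prove $\langle\epsilon_i,\omega_{n+j}\rangle=i!\,(n-1-(p-2-j))\,\omega_{n+N-(p-2)}\circ\alpha_1$. The ingredients are Hamanaka's lift of $\omega_{n+j}$ to $\Sigma\CP^{n-1-(p-2-j)}$, and the lift $\tilde c$ of the commutator into $\Omega\V_n$ with $\tilde c^*(a_{2k})=\sum_{u+v=k-1}x_{2u+1}\otimes x_{2v+1}$. One then collapses to a stunted projective space that is $p$-locally a wedge of spheres, and reads the $\alpha_1$-coefficient off $\mathcal{P}^1(\sigma x^{n-1-(p-2-j)})$.
\item Nonvanishing is then $i!\,(n-1-(p-2-j))\equiv-(i+1)!\not\equiv 0\pmod p$. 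This is where $i\le p-2$, i.e.\ $p\ge j+2$, enters.
\end{enumerate}

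\textbf{Your step 1 heuristic puts $\alpha_1$ in the wrong place.}
\begin{itemize}
\item The $p$-torsion of $\pi_{2n+2j}\SU(n)$ is $\ZZ/p^{\nu_p((n+j)!)}$. It comes from the top cell of $\SU(n+j+1)/\SU(n)$ via the connecting map.
\item It is generally not an $\alpha_1$-composite on a sphere factor: for $n=6$, $j=0$, $p=3$ it is $\ZZ/9$.
\item Your degree count also fails: $2n+2j-(2p-3)=2i-1$ with $n-j\le i\le n$ would force $p\le 2j+2$, which is impossible for $j=0$.
\end{itemize}
In the paper, $\alpha_1$ lives in the target $\pi_{2n+2N+1}\SU(n)$ of the Samelson product, and locating it there is what selects the test sphere $\Smr^{2i+1}$.

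\textbf{A caution if you adopt this route: it needs $i\ge 1$.} When $j=p-2$ and $p\mid n$ (e.g.\ $p=3$, $j=1$, $3\mid n$), one gets $N=p-2$ and $i=0$. Then $\pi_1\SU(n)=0$, so $\pi_1\B\Gcal$ is the same for every bundle. The paper's argument does not treat this case separately, and such a prime needs a different argument.
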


For example, taking $j=0$ and $j=1$ respectively and applying \Cref{thm:main02}, we obtain the following systematic lower bounds for the number of homotopy type of $\SU(n)$-gauge groups.

\begin{cor}[See \Cref{corank1}]
The number of homotopy types of $\SU(n)$-gauge groups over $\Smr^{2n+1}$ is at least
\[
 \begin{cases}
 2^{\omega(n)} & \text{if } n \text{ is odd} \\
 2^{\omega(n) - 1} & \text{if } n \text{ is even},
\end{cases}
\]
where $\omega(n)$ denotes the number of primes dividing $n$.
\end{cor}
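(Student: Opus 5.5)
The corollary is the specialization $j=0$ of \Cref{thm:main02}, so the plan is to compute $\theta(n,0)$ and compare it with $\omega(n)$. There is nothing geometric left to do; it is a counting argument about primes.

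By definition, $\theta(n,0)$ counts the odd primes $p$ with $p\ge 2$, $p\le n+1$, and $n \bmod p\in\{0\}$. For $j=0$ the set $\{0,p-1,\dots,p-j\}$ should be read as $\{0\}$ alone, since the list $p-1,\dots,p-j$ is empty. I will state this reading explicitly, because it is the only point where the indexing convention matters. The condition $p\ge 2$ is automatic for a prime. The condition $p\mid n$ forces $p\le n<n+1$, so the bound $p\le n+1$ is also automatic. Hence $\theta(n,0)$ is exactly the number of odd primes dividing $n$.

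Next I compare this count with $\omega(n)$. If $n$ is odd, every prime divisor of $n$ is odd, so $\theta(n,0)=\omega(n)$. If $n$ is even, $2$ is among the prime divisors of $n$ and it is the only one excluded, so $\theta(n,0)=\omega(n)-1$. Substituting into the bound $2^{\theta(n,0)}$ of \Cref{thm:main02}, with $2n+2j+1=2n+1$, gives the two cases in the statement.

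One caveat is that \Cref{thm:main02} assumes $n\ge 3$. For $n=1,2$ the claimed bounds are $2^{0}=1$ (since $\omega(1)=0$ and $\omega(2)-1=0$), which holds trivially because the trivial bundle already gives one homotopy type. I do not expect any real obstacle here; the only care needed is the reading of the set for $j=0$ and the handling of small $n$.
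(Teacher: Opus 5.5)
Your proposal is correct and follows the paper's argument: the paper also specializes the counting theorem to $j=0$, identifies $C(n,0)$ with the set of odd primes dividing $n$, and reads off $\theta(n,0)=\omega(n)$ or $\omega(n)-1$ according to the parity of $n$. Your explicit reading of the empty list $p-1,\dots,p-j$ and your handling of the trivial cases $n=1,2$ (the paper simply assumes $n\geq 3$) are harmless additions.
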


\begin{cor}[See \Cref{corank2}]
The number of homotopy types of $\SU(n)$-gauge groups over $\Smr^{2n+3}$ is at least \[2^{\omega(n) + \omega(n+1) - 1}.\]
\end{cor}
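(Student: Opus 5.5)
The plan is to specialize \Cref{thm:main02} to $j=1$, so that $2n+2j+1 = 2n+3$, and then count $\theta(n,1)$ in terms of prime divisors. Throughout I assume $n\geq 3$, as \Cref{thm:main02} requires.

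For $j=1$, $\theta(n,1)$ is the number of odd primes $p$ with $3\leq p\leq n+1$ such that $n \bmod p$ lies in $\{0,\,p-1\}$. The condition $n\equiv 0 \pmod p$ means $p\mid n$, and $n\equiv p-1\pmod p$ means $p\mid n+1$. So $\theta(n,1)$ counts odd primes $p\leq n+1$ dividing $n(n+1)$. The bound $p\leq n+1$ is automatic, since any prime dividing $n$ or $n+1$ is at most $n+1$. The bound $p\geq 3$ is just oddness. Hence $\theta(n,1)$ is exactly the number of odd primes dividing $n(n+1)$.

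Next I count these primes. Since $\gcd(n,n+1)=1$, the prime divisors of $n$ and of $n+1$ form disjoint sets. So the number of primes dividing $n(n+1)$ is $\omega(n)+\omega(n+1)$. Exactly one of $n$ and $n+1$ is even, so the prime $2$ occurs exactly once in this count. Removing it gives
\[
\theta(n,1)=\omega(n)+\omega(n+1)-1.
\]
\Cref{thm:main02} then yields at least $2^{\omega(n)+\omega(n+1)-1}$ distinct homotopy types of $\SU(n)$-gauge groups over $\Smr^{2n+3}$.

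There is no real obstacle; the only care needed is the bookkeeping. The two residue conditions must be identified with divisibility of $n$ and of $n+1$, and coprimality of consecutive integers is what prevents double counting. All the substance lies in \Cref{thm:main02} itself.
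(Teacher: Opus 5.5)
Your reduction is the paper's own: specialize \Cref{thm:main02} to $j=1$, read $n\equiv 0$ and $n\equiv p-1 \pmod p$ as $p\mid n$ and $p\mid n+1$, and use that $n$ and $n+1$ are coprime with exactly one of them even. The arithmetic is right. The problem is the step where you treat $p\geq j+2$ as ``just oddness''. That is where a degenerate prime slips in, and the black box you invoke does not actually cover it.

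Run the proof of \Cref{thm:appl} with $p=3$, $j=1$ and $3\mid n$. The unique $N\in\{1,2\}$ with $n\equiv 1-N \pmod 3$ is $N=1$, so $i=N-j=0$ and the test complex is $\Smr^1$. Since $\SU(n)$ is simply connected, $(\partial_{\sfP})_*:\pi_1\SU(n)\to\pi_{2n+3}\SU(n)$ is zero for every $\sfP$. Hence $\pi_1\B\Gcal_{\sfP}\cong\pi_{2n+3}\SU(n)$ for every bundle, and nothing is detected at $p=3$. Correspondingly, the class $\epsilon_0$ needed in \Cref{thm:Sam} does not exist, since $\epsilon_i$ is only defined for $1\leq i\leq n-1$. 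In general the argument needs $i\geq 1$, and this fails exactly when $p=j+2$ and $p\mid n$. So when $3\mid n$, the paper's method, as written, only yields $2^{\omega(n)+\omega(n+1)-2}$. For $n=3$, where $C(3,1)=\{3\}$, it gives no distinction at all between $\SU(3)$-gauge groups over $\Smr^9$. The stated bound in that case would need a separate $3$-local argument, for instance a different test complex or a Samelson product outside the range of \Cref{thm:Sam}; neither your proof nor the paper's supplies one. When $3\nmid n$ your argument is fine: $3\mid n+1$ gives $i=1$, and every $p\geq 5$ gives $i\geq p-3\geq 2$.
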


More concrete examples can be obtained by setting specific values of $n$ and $j$ and applying \Cref{thm:main02}. We present one example below, and more can be found in \Cref{subsec:application}.

\begin{ex}
Consider the principal $\SU(1012)$-bundles over $\Smr^{2027}$. We have:
\[
\Prin_{\SU(1012)}(\Smr^{2027}) \cong \pi_{2026}\SU(1012) \cong \ZZ/1013!\oplus \ZZ/2.
\]
So up to isomorphism there are $2\cdot (1013!)$ such bundles. Applying \Cref{thm:main02} with $n=1012$ and $j=1$, we find that
\[
\theta(1012, 1) = 3.
\]
So the $2\cdot (1013!)$ bundles give rise to at least $2^3 = 8$ distinct homotopy types of gauge groups.
Similarly, we find that
\[
\theta(1003, 10) = 10, \quad \text{and} \quad \theta(838, 175) = 40.
\]
So \Cref{thm:main02} implies that there are at least $2^{10}$ distinct homotopy types of $\SU(1003)$-gauge groups over $\Smr^{2027}$, and at least  $2^{40}$ distinct homotopy types of $\SU(838)$-gauge groups over $\Smr^{2027}$.
\end{ex}

The key to proving \Cref{thm:counting} is the calculation of a Samelson product (see \Cref{thm:Sam}). This is a generalized version of the calculation conducted in \cite{Ha07}. The details of the Samelson product calculation are presented in \Cref{subsec:Samelson}. In \Cref{subsec:application} we apply the calculation to obtain lower bounds on the number of homotopy types of the gauge groups in question.

\subsection*{Acknowledgment}
The authors thank Larry So for helpful conversations. 
Frankland 
acknowledges the support of the Natural Sciences and Engineering Research Council of Canada (NSERC), grant RGPIN-2026-07853, as well as the support of the Max-Planck-Institut für Mathematik, report number MPIM-Bonn-2026. 
Hu 
acknowledges the support of the Pacific Institute for the Mathematical Sciences (PIMS) through CRG41. 


\section{Homotopy types of \texorpdfstring{$\SU(n)$}{SUn}-gauge groups over \texorpdfstring{$\Smr^{2n}$}{S2n}} \label{sec:sun}

This section is dedicated to the last stable case.
Namely, we detect homotopy types of $\SU(n)$-gauge groups over $\Smr^{2n}$ for a general $n$. 
The starting point is the fiber sequence:
\begin{equation} \label{eq:funfibn}
\begin{tikzcd}
\Gcal_k \ar[r] & \SU(n) \ar[r, "\partial_k"'] & \Omega_0^{2n-1}\SU(n) \ar[r] & \B\Gcal_k \ar[r] & \BSU(n)
\end{tikzcd}
\end{equation}
where the connecting map
\[
\partial_k \in [\SU(n), \Omega_0^{2n-1}\SU(n)] \cong [\SU(n), \Omega^{2n-1}\SU(n)] \cong [\Sigma^{2n-1}\SU(n), \SU(n)]
\]
is the Samelson product
\[
\langle k\epsilon, 1_{\SU(n)} \rangle:
\begin{tikzcd}
 \Smr^{2n-1} \sma \SU(n) \ar[rr, "k\epsilon \sma 1_{\SU(n)}"'] & &  \SU(n)\sma \SU(n) \ar[r, "c"'] & \SU(n).
\end{tikzcd}
\]

We use the finite complex $\Sigma\CP^2$ as the test complex.
Applying the functor $[\Sigma\CP^2, -]$ to \eqref{eq:funfibn} yields an exact sequence:
\begin{equation} \label{eq:lesn}
\begin{tikzcd}[column sep= small]
{[\Sigma\CP^2, \Gcal_k]} \ar[r] & {[\Sigma\CP^2, \SU(n)]} \ar[r, "(\partial_k)_*"'] & {[\Sigma\CP^2, \Omega_0^{2n-1}\SU(n)]} \ar[r] & {[\Sigma\CP^2, \B\Gcal_k]} \ar[r] & 0.
\end{tikzcd}
\end{equation}
The last group in \eqref{eq:lesn} is the zero group, since for $n > 2$, 
\[
[\Sigma\CP^2, \BSU(n)] \cong [\Sigma\CP^2, \BU] \cong \KUt^0(\Sigma\CP^2) = 0.
\]
It follows that:
\begin{align*}
\Gcal_k \simeq \Gcal_{\ell} & \implies [\Sigma\CP^2, \B\Gcal_k] \cong[\Sigma\CP^2, \B\Gcal_{\ell}] \\
& \implies |\Im (\partial_k)_*| = |\Im (\partial_\ell)_*|.
\end{align*}
We calculate $\Im (\partial_k)_*$ to obtain necessary conditions for $\Gcal_k\simeq \Gcal_{\ell}$.
Our main results are:

\begin{thm} \label{thm:lowerboundnodd}
Suppose that $n$ is odd. In this case, if $\Gcal_k\simeq \Gcal_{\ell}$ then
\[
\gcd(2n(n+1)(n+2), \, k) = \gcd(2n(n+1)(n+2), \, \ell).
\]
\end{thm}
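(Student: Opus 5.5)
Following the Hamanaka--Kono method set up above, I will compute the finite abelian groups in \eqref{eq:lesn} and the image of $(\partial_k)_*$. The plan has three steps.

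\textbf{Step 1: the target group.} First identify
\[
[\Sigma\CP^2, \Omega_0^{2n-1}\SU(n)] \cong [\Sigma^{2n}\CP^2, \SU(n)].
\]
Apply $[-,\SU(n)]$ to the cofiber sequence $\Smr^{2n+1}\to \Sigma^{2n}\CP^2 \to \Smr^{2n+3}$. I will use the classical unstable groups
\[
\pi_{2n+1}\SU(n) \cong \ZZ/n!, \qquad \pi_{2n+2}\SU(n) \cong \ZZ/(n+1)!\oplus \ZZ/2 \ (n \text{ odd}).
\]
I also need $\pi_{2n+3}\SU(n)$ and $\pi_{2n+2}\SU(n)$ in the relevant degree. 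The cleanest route is to map into the fibration $\SU(n)\to \SU(n+2)\to W_{n+2,2}$ (or $\SU(n)\to\SU(\infty)\to \SU(\infty)/\SU(n)$) and use the known cohomology of $\Sigma^{2n}\CP^2$ and of the Stiefel manifold. The expected answer is that $[\Sigma^{2n}\CP^2,\SU(n)]$ contains a summand that is cyclic of order $n!\,(n+1)(n+2)\cdot 2$ up to the $2$-primary correction, or an extension of $\ZZ/n!$ by a cyclic group of order about $(n+1)!(n+2)$. I will compute its order explicitly.

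\textbf{Step 2: the image of $\partial_k$.} By linearity of the Samelson product in the first variable, $\partial_k = k\,\partial_1$, so $\Im(\partial_k)_* = k\cdot \Im(\partial_1)_*$. It therefore suffices to determine $\Im(\partial_1)_*$ as a subgroup of the target, and in fact to find a single element of $[\Sigma\CP^2,\SU(n)]$ whose image generates a cyclic subgroup of order divisible by $2n(n+1)(n+2)$ after quotienting appropriately.

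Take the generator of $[\Sigma\CP^2,\SU(n)]\cong \ZZ$ (or $\ZZ\oplus$ torsion) given by the inclusion $\Sigma\CP^2\to \SU(3)\subset \SU(n)$, or its composite with $\Sigma\CP^2\to\Sigma\CP^{n-1}\to\SU(n)$. Its Samelson product with $\epsilon$ is
\[
\Sigma^{2n-1}\Sigma\CP^2 \to \SU(n).
\]
I will detect it by lifting to $\SU(\infty)$, where the product vanishes, so that it factors through $\Omega(\SU(\infty)/\SU(n))$. The lift is then computed via the Chern character on $\tilde K^0(\Sigma^{2n+1}\CP^2)$ or $\tilde K^{-1}$, exactly as in Hamanaka--Kono: one obtains a $K$-theory class whose Chern character has coefficients involving $\binom{n+i}{i}$ and $1/(n+i)!$ factors. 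The image in the cokernel of $\tilde K(\Sigma^{2n}\CP^2)\to \ldots$ is then an integer vector modulo a lattice, and its order is computed directly. I expect this order to be $2n(n+1)(n+2)$ up to factors that can be ignored.

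\textbf{Step 3: conclusion.} Once $\Im(\partial_1)_*$ is shown to contain (or equal) a cyclic summand of order $N = 2n(n+1)(n+2)$, the order of $\Im(\partial_k)_*$ equals $N/\gcd(N,k)$, times a factor independent of $k$. Then $|\Im(\partial_k)_*|=|\Im(\partial_\ell)_*|$ forces $\gcd(N,k)=\gcd(N,\ell)$. If the image is not cyclic, I instead use an $\ZZ$-basis to show that its order is $\prod_i N_i/\gcd(N_i,k)$ with $N_i \mid N$ and $\operatorname{lcm}(N_i)=N$. Here I must argue that equality of orders still forces equality of the gcds; this is easiest if one factor is a multiple of every other, and I expect the structure to be nested.

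\textbf{Main obstacle.} The hard step is the explicit Chern character computation in Step 2, in particular pinning down the extra factor $2$ for $n$ odd. This likely comes from the $\ZZ/2$ in $\pi_{2n+2}\SU(n)$ for $n$ odd, or from the fact that the relevant $\CP^2$ attaching map has Hopf invariant $1$, which yields the $\frac{1}{2}$-denominators in $ch$. I would first try the $p$-local analysis separately for $p$ odd, where $\Sigma\CP^2$ splits and only $\pi_{2n+1}$ and $\pi_{2n+3}$ matter, and then treat $p=2$ by the explicit Chern character matrix.
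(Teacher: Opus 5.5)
There is a genuine gap: the proposal never computes the order of the image of $(\partial_k)_*$, and that computation is the whole content of the theorem. Your overall architecture is right and is the paper's: test complex $\Sigma\CP^2$, $\partial_k=k\partial_1$, lifting the commutator into the loop space of a Stiefel manifold, Chern-character detection, and a gcd comparison at the end. But the decisive step is replaced by ``I expect this order to be $2n(n+1)(n+2)$ up to factors that can be ignored''. No factor can be ignored, since the homotopy invariant is the order of the cokernel $[\Sigma\CP^2,\B\Gcal_k]$, i.e.\ the exact order of the image. The paper obtains it in four steps.
\begin{enumerate}
\item It embeds $[\Sigma^{2n}\CP^2,\SU(n)]$ into $[\Sigma^{2n}\CP^2,\U(n+1)]$. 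This is injective because $[\Sigma^{2n+1}\CP^2,\Smr^{2n+1}]=0$, using $\eta^3=4\nu\neq0$.
\item It therefore works in $\Omega\V_{n+1}$, which is $(2n+1)$-connected. For $n$ odd its $(2n+4)$-skeleton is $\Smr^{2n+2}\vee\Smr^{2n+4}$, so via $(a_{2n+2},a_{2n+4})$ the group $[\Sigma^{2n}\CP^2,\Omega\V_{n+1}]$ is the lattice $\{(a,b): a \text{ even}\}$.
\item From $(\Omega\pi)^*a_{2m}=m!\,\ch_m$, the image of $(\Omega\pi)_*$ is spanned by $((n+1)!,(n+2)!/2)$ and $(0,(n+2)!)$. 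From $\tilde c^*(a_{2m})=\sum_{r+s=m-1}x_{2r+1}\otimes x_{2s+1}$, $M_1$ is spanned by $((n-1)!,(n-1)!)$ and $(0,2(n-1)!)$, the lifts of $\langle\epsilon,\bar x\rangle$ and $\langle\epsilon,\bar x'\rangle$.
\item The common basis $u'=(2,n+2)$, $v'=nu'-(0,1)$ diagonalizes both lattices, giving image order
\[
\frac{n(n+1)/2}{\gcd(n(n+1)/2,\,2k)}\cdot\frac{2n(n+1)(n+2)}{\gcd(2n(n+1)(n+2),\,k)}.
\]
\end{enumerate}
The first factor depends on $k$, contrary to your Step 3. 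The conclusion still follows, because for each prime $p$ its $p$-part depends only on $\min(\nu_p(k),\nu_p(2n(n+1)(n+2)))$, where $\nu_p$ is the $p$-adic valuation.

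Several inputs you list are also wrong and would derail the computation.
\begin{itemize}
\item $\Sigma^{2n}\CP^2$ has cells in dimensions $2n+2$ and $2n+4$, not $2n+1$ and $2n+3$.
\item It is $\pi_{2n}\SU(n)$ that is $\ZZ/n!$; $\pi_{2n+1}\SU(n)=0$ for $n$ odd.
\item The $\ZZ/2$ summand of $\pi_{2n+2}\SU(n)$ occurs for $n$ even. For $n$ odd, $\pi_{2n+2}\SU(n)\cong\ZZ/\tfrac{(n+1)!}{2}$, so the source you propose for the extra factor $2$ does not exist. In the paper that $2$ comes from $[\Sigma^{2n}\CP^2,\Omega\V_{n+1}]$ being the index-two sublattice $\{a\ \text{even}\}$; this is exactly where odd $n$ differs from $n\equiv 0 \bmod 4$.
\item $[\Sigma\CP^2,\SU(n)]\cong\KUt^0(\CP^2)\cong\ZZ^2$, generated by $x$ and $x^2$, not $\ZZ$. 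The image of $(\partial_k)_*$ is not cyclic (already for $n=3$, $k=1$ it is $\ZZ/3\oplus\ZZ/120$). So the cyclic subgroup generated by a single element such as $\langle k\epsilon,\bar x\rangle$ is not a homotopy invariant, and you need both generators.
\end{itemize}
Your Step 1, computing $[\Sigma^{2n}\CP^2,\SU(n)]$ itself through unstable homotopy groups and extension problems, is unnecessary: only the index of $M_k\cap\Im(\Omega\pi)_*$ in $M_k$ is needed. If you lift into $\Omega(\SU/\SU(n))$ rather than $\Omega\V_{n+1}$, you must also handle its bottom cell $\Smr^{2n}$ and a three-cell skeleton before cohomology detects maps from $\Sigma^{2n}\CP^2$. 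The passage to $\U(n+1)$ is what avoids this.
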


\begin{thm} \label{thm:lowerboundneven}
Suppose that $n$ is even. In this case, if $\Gcal_k\simeq \Gcal_{\ell}$ then
\[
\begin{cases}
\gcd(n(n+1)(n+2)/2, \, k) = \gcd(n(n+1)(n+2)/2, \, \ell), & \text{ if } n \equiv 0 \text{ mod } 4 \\
\gcd(n(n+1)(n+2), \, k) = \gcd(n(n+1)(n+2), \, \ell), & \text{ if } n \equiv 2 \text{ mod } 4.
\end{cases}
\]
\end{thm}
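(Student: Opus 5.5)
We plan to follow the Hamanaka--Kono strategy set up above: compute the group $[\Sigma\CP^2, \Omega_0^{2n-1}\SU(n)] \cong [\Sigma^{2n}\CP^2, \SU(n)]$ and the image of $(\partial_k)_*$ in it. Then we read off the order of $\Im(\partial_k)_*$ as a function of $k$.

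\textbf{The two groups.} Since $\Sigma^{2n}\CP^2$ has cells in dimensions $2n+2$ and $2n+4$, we would use the fibration $\SU(n)\to\SU(\infty)\to \SU(\infty)/\SU(n)$ together with the known low cells of $\SU(\infty)/\SU(n)$, namely $\Sigma^{2n}\CP^{\infty}$-type cells in dimensions $2n+1, 2n+3, 2n+5$. This gives
\[
[\Sigma^{2n}\CP^2,\SU(n)] \cong \mathrm{coker}\big(\widetilde{KU}^{-1}(\Sigma^{2n}\CP^2)\to [\Sigma^{2n}\CP^2, \SU(\infty)/\SU(n)]\big).
\]
We identify this cokernel explicitly as a cyclic group, or a cyclic group plus a $\ZZ/2$ depending on the parity of $n$, of order roughly $n!\,(n+1)!$-scale quotients. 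The Chern-character bookkeeping must be carried out carefully, as in Hamanaka--Kono. The source $[\Sigma\CP^2,\SU(n)]$ is computed similarly; in the stable range it is $\widetilde{KU}^{-1}(\Sigma\CP^2)\cong\ZZ^2$, generated by maps whose Chern characters we record.

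\textbf{The Samelson products.} By linearity $(\partial_k)_*=k(\partial_1)_*$. Hence it suffices to determine the image of $\langle\epsilon, \gamma\rangle$ for the generators $\gamma$ of $[\Sigma\CP^2,\SU(n)]$. We would compute these Samelson products $\Sigma^{2n-1}\Sigma\CP^2\to\SU(n)$ by lifting to $\SU(\infty)$, where the commutator is null. This expresses the product as an element coming from $\SU(\infty)/\SU(n)$, and the Hamanaka--Kono formula expresses its Chern character in terms of products of the Chern characters of $\epsilon$ and $\gamma$. The result is that $\Im(\partial_1)_*$ is generated by elements whose coordinates are explicit multiples, with coefficients like $n$, $\tfrac{n(n+1)}{2}$, and $\tfrac{(n+1)(n+2)}{2}$-type combinations. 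Consequently the order of $\Im(\partial_1)_*$ is $N:=2n(n+1)(n+2)$ for odd $n$ and $n(n+1)(n+2)$ or half of it for even $n$, and $\Im(\partial_k)_*$ has order $N/\gcd(N,k)$ at least modulo the cyclic part that controls it. Then $|\Im(\partial_k)_*|=|\Im(\partial_\ell)_*|$ gives $\gcd(N,k)=\gcd(N,\ell)$.

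\textbf{Main obstacle.} The delicate step is the precise $2$-primary analysis. Odd primes work uniformly. At $p=2$, however, the target group and the image depend on $n$ mod $4$, through parity of binomial coefficients and the extra $\ZZ/2$ summand. We must show that the image is genuinely cyclic of the stated $2$-order, so that the order of $k\cdot(\text{image})$ detects $\gcd$ with the full power of $2$. We would handle this by splitting into the cases $n$ odd, $n\equiv 0$, and $n\equiv 2$ mod $4$. In each case we would compute the $2$-adic valuation of the coefficients explicitly. For $n\equiv 2$ mod $4$ we would check that the $\ZZ/2$ summand does not absorb a factor of $2$, which yields the improvement over \cite{Moh22}.
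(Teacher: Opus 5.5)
Your framework (test complex $\Sigma\CP^2$, $(\partial_k)_*=k(\partial_1)_*$, lifting the commutator to the loop space of a Stiefel manifold, the Hamanaka--Kono formulas) is the paper's. However, the final counting step would fail. You plan to show that $\Im(\partial_1)_*$ is cyclic of order $N$, so that $|\Im(\partial_k)_*|=N/\gcd(N,k)$. It is not cyclic, and not only at $2$. The paper chooses a basis of the rank-two lattice $[\Sigma^{2n}\CP^2,\Omega\V_{n+1}]$ adapted simultaneously to $\Im(\Omega\pi)_*$ and to the lifted Samelson products $M_1$. This gives $\Im(\partial_1)_*\cong\ZZ/A\oplus\ZZ/B$, with $(A,B)=(n(n+1),\,n(n+1)(n+2)/2)$ if $4\mid n$ and $(A,B)=(n(n+1)/2,\,n(n+1)(n+2))$ if $n\equiv 2 \bmod 4$. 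For $n=4$ this is $\ZZ/20\oplus\ZZ/60$, which is non-cyclic at $2$ and at $5$. Hence $|\Im(\partial_k)_*|=\frac{A}{\gcd(A,k)}\cdot\frac{B}{\gcd(B,k)}$; in particular $|\Im(\partial_1)_*|=AB$, not $B$.

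The missing idea is that $A\mid B$. At each prime $p$, put $a=v_p(A)\le b=v_p(B)$ and $x=v_p(k)$. The exponent $a+b-\min(a,x)-\min(b,x)$ is strictly decreasing in $x$ until $x\ge b$ and is zero afterwards. So the order determines $\min(b,v_p(k))$ for every $p$, i.e.\ $\gcd(B,k)$, which is the theorem. The $n\bmod 4$ dichotomy is visible $2$-adically. For $4\mid n$ the $2$-part is $\ZZ/2^{v_2(n)}\oplus\ZZ/2^{v_2(n)}$, which only detects $\min(v_2(n),v_2(k))$; for $n\equiv 2\bmod 4$ it is cyclic. This comes from the parity of $(n+2)/2$ in the adapted basis, not from an extra $\ZZ/2$ summand.

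The setup of the target group also needs repair. As written, $\KUt^{-1}(\Sigma^{2n}\CP^2)=[\Sigma^{2n}\CP^2,\SU]=0$, so your cokernel formula is off by a loop. The relevant sequence is $\KUt^{0}(\Sigma^{2n}\CP^2)\to[\Sigma^{2n}\CP^2,\Omega(\SU/\SU(n))]\to[\Sigma^{2n}\CP^2,\SU(n)]\to 0$. Its result is a quotient of a rank-two lattice with two nontrivial elementary divisors (for $n=4$ the $5$-part is $\ZZ/5\oplus\ZZ/5$), not ``cyclic or cyclic plus $\ZZ/2$''.

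More substantively, $\Omega(\SU/\SU(n))$ has its bottom cell in dimension $2n$, below both cells of $\Sigma^{2n}\CP^2$. So it is not automatic that a lift is determined by $(a_{2n+2},a_{2n+4})$. Moreover, the sublattice of $\ZZ^2$ these invariants realize depends on the $\pi_3^s$-component of an attaching map. ``Chern-character bookkeeping'' alone does not settle this.

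The paper avoids the issue in two steps. First it proves that $i_*\colon[\Sigma^{2n}\CP^2,\SU(n)]\to[\Sigma^{2n}\CP^2,\U(n+1)]$ is injective, via $[\Sigma^{2n+1}\CP^2,\Smr^{2n+1}]=0$. Then it works with $\Omega\V_{n+1}$, $\V_{n+1}=\U/\U(n+1)$, whose cells through dimension $2n+4$ sit exactly in dimensions $2n+2$ and $2n+4$. There $\lambda=(a_{2n+2}\times a_{2n+4})_*$ is injective with image $\{(a,b):a\equiv b\bmod 2\}$ for even $n$. This makes all three lattices ($[\Sigma^{2n}\CP^2,\Omega\V_{n+1}]$, $\Im(\Omega\pi)_*$, $M_k$) explicit, and the Smith-normal-form style comparison above becomes a finite computation.
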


\smallskip

In what follows, we treat the abelian group
\[
[\Sigma\CP^2, \Omega_0^{2n-1}\SU(n)] \cong [\Sigma^{2n}\CP^2, \SU(n)]
\]
as a subgroup of $[\Sigma^{2n}\CP^2, \U(n+1)]$. Indeed, writing $i$ for the standard subgroup inclusion $\SU(n) \rightarrow \U(n+1)$, we have:

\begin{prop} \label[prop]{subpresentn}
The induced homomorphism
\[
i_*: [\Sigma^{2n}\CP^2, \SU(n)] \longrightarrow [\Sigma^{2n}\CP^2, \U(n+1)]
\]
is injective.
\end{prop}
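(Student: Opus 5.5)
Factor $i$ as $\SU(n)\xrightarrow{j}\U(n)\xrightarrow{i'}\U(n+1)$, where $\U(n)\cong \SU(n)\times S^1$ as spaces. Since $\Sigma^{2n}\CP^2$ is a suspension of dimension $2n+4\geq 6$ and $[\Sigma^{2n}\CP^2,S^1]=0$, the map $j_*$ is an isomorphism. It therefore suffices to prove that $i'_*:[\Sigma^{2n}\CP^2,\U(n)]\to[\Sigma^{2n}\CP^2,\U(n+1)]$ is injective. The fibration $\U(n)\to\U(n+1)\to \Smr^{2n+1}$ gives an exact sequence of groups (the source is a suspension, so all terms are groups):
\[
[\Sigma^{2n+1}\CP^2,\U(n+1)] \xrightarrow{q_*} [\Sigma^{2n+1}\CP^2,\Smr^{2n+1}] \xrightarrow{\delta} [\Sigma^{2n}\CP^2,\U(n)] \xrightarrow{i'_*} [\Sigma^{2n}\CP^2,\U(n+1)].
\]
So injectivity of $i'_*$ is equivalent to surjectivity of $q_*$, that is, to $\delta=0$.

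Next I compute $[\Sigma^{2n+1}\CP^2,\Smr^{2n+1}]$ using the cofibration $\Smr^{2n+3}\to\Smr^{2n+5}\to\Sigma^{2n+1}\CP^2\to \Smr^{2n+2}\xrightarrow{\Sigma^{2n}\eta}\Smr^{2n+4}$ obtained from the cell structure of $\CP^2$. Since $n\geq 3$ we are in the stable range, and we have $\pi_{2n+3}\Smr^{2n+1}=\ZZ/2\{\eta^2\}$ and $\pi_{2n+5}\Smr^{2n+1}=\ZZ/24\{\nu\}$. The exact sequence
\[
\pi_{2n+2}\Smr^{2n+1}\xrightarrow{\eta^*}\pi_{2n+3}\Smr^{2n+1}\to[\Sigma^{2n+1}\CP^2,\Smr^{2n+1}]\to\pi_{2n+5}\Smr^{2n+1}\xrightarrow{\eta^*}\pi_{2n+6}\Smr^{2n+1}
\]
has first map $\eta\mapsto\eta^2$, which is onto, and last map $\nu\mapsto\nu\eta=0$. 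Hence $[\Sigma^{2n+1}\CP^2,\Smr^{2n+1}]\cong \ZZ/24$, and it is detected by restriction to the top cell in the sense that an element $f$ corresponds to some $\tilde\nu$ with $\tilde\nu\circ(\text{pinch})$-type relation. More precisely, the generator is an extension $\tilde\nu$ of $\nu$ over the bottom cell.

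It then remains to show that $q_*$ hits a generator. The composite $\pi_{2n+5}\U(n+1)\to\pi_{2n+5}\Smr^{2n+1}$ is not obviously onto, so I would argue more directly instead.

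First, $[\Sigma^{2n+1}\CP^2,\U(n+1)]$ is computed by the analogous cell sequence, using the $\U(n+1)$ homotopy groups $\pi_{2n+3}$ and $\pi_{2n+5}$. Alternatively, I would use the $K$-theoretic description: $\U(n+1)\to \U$ is $(2n+2)$-connected, and the image of $q$ is governed by the $e$-invariant. A cleaner route is the following. $\delta$ factors as $\delta(f)=\partial\circ \Omega$-adjoint, where $\partial:\Omega \Smr^{2n+1}\to\U(n)$ restricted to the bottom cell is the generator of $\pi_{2n}\U(n)\cong\ZZ/n!$. Then $\delta(\tilde\nu)$ is the composite $\Sigma^{2n}\CP^2\to \Smr^{2n}\to\U(n)$ through $\partial$ precomposed with a suspension of $\tilde\nu$, i.e.\ it equals $\partial_*(\Sigma^{-1}\tilde\nu)$. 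Showing this vanishes reduces, after restriction along the bottom cell and via the top-cell exact sequence for $[\Sigma^{2n}\CP^2,\U(n)]$, to showing that $\partial\circ\nu=0$ in $\pi_{2n+3}\U(n)$ and that the remaining indeterminacy lies in the image of $\eta$-composites, which vanish.

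The main obstacle is this vanishing, namely that the connecting map kills $\nu$ and $\eta^2$. I would handle it $p$-locally. At primes $p\geq5$ the group is zero. At $p=3$ I would use the fact that $\alpha_1$-composites with the generator of $\pi_{2n}\U(n)$ are determined by the mod-3 Steenrod operation $\mathcal P^1$ on $H^*(\U(n))$, which lands outside the relevant range. At $p=2$ I would use the known tables of $\pi_{2n+3}\U(n)$ (Matsunaga/Kervaire) together with $Sq^2$, $Sq^4$ detection. If this case work proves unwieldy, the fallback is to replace $\U(n)$ by $\U$ in a range via the $(2n+1)$-connected map $\U(n+1)\to\U$, and to verify injectivity using that $[\Sigma^{2n}\CP^2,\U]\cong\widetilde{KU}^{-1}(\Sigma^{2n}\CP^2)=0$ does not obstruct the argument. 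I expect the $2$-primary step to be the delicate one.
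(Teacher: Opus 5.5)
Your reduction matches the paper's. Replacing $\SU(n)\to\SU(n+1)$ by $\U(n)\to\U(n+1)$, via $\U(n)\cong\SU(n)\times\Smr^1$ and $[\Sigma^{2n}\CP^2,\Smr^1]=0$, leaves the fiber $\Omega\Smr^{2n+1}$ unchanged, and injectivity then follows once $\delta=0$.

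The gap is that your computation of $[\Sigma^{2n+1}\CP^2,\Smr^{2n+1}]$ is wrong. The cells of $\Sigma^{2n+1}\CP^2$ lie in dimensions $2n+3$ and $2n+5$, and the cofibration is $\Smr^{2n+4}\stackrel{\eta}{\longrightarrow}\Smr^{2n+3}\to\Sigma^{2n+1}\CP^2\to\Smr^{2n+5}\stackrel{\eta}{\longrightarrow}\Smr^{2n+4}$. The resulting exact sequence is
\[
\pi_{2n+5}\Smr^{2n+1}\longrightarrow[\Sigma^{2n+1}\CP^2,\Smr^{2n+1}]\longrightarrow\pi_{2n+3}\Smr^{2n+1}\stackrel{\eta^*}{\longrightarrow}\pi_{2n+4}\Smr^{2n+1}.
\]
Here the top cell enters through the pinch map and the bottom cell through restriction, which is the reverse of your sequence. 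Moreover $\pi_{2n+5}\Smr^{2n+1}=\pi_4^s=0$; the group $\ZZ/24\{\nu\}$ you used is $\pi_{2n+4}\Smr^{2n+1}=\pi_3^s$. Finally, $\eta^*(\eta^2)=\eta^3\neq 0$ (it is $4\nu$ in the $2$-local group $\ZZ/8\{\nu\}$), so $\eta^*$ is injective and $[\Sigma^{2n+1}\CP^2,\Smr^{2n+1}]=0$. Then $\delta=0$ automatically and the proof is finished; this is exactly the paper's argument.

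As written, though, your proposal does not prove the statement. Having arrived at the false value $\ZZ/24$, you leave the vanishing of $\delta$ as the ``main obstacle'' and only describe the $3$- and $2$-primary analysis without carrying it out. Your fallback through $\U$ cannot work either. Neither $\U(n)\to\U$ ($2n$-connected) nor $\U(n+1)\to\U$ ($(2n+2)$-connected) is connected enough relative to $\dim\Sigma^{2n}\CP^2=2n+4$. More decisively, $[\Sigma^{2n}\CP^2,\U]=0$, so composing with a map to $\U$ kills every element of the nonzero group $[\Sigma^{2n}\CP^2,\U(n)]$ and can never witness injectivity.
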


\begin{proof}
Since $\Sigma^{2n}\CP^2$ is simply-connected, the induced map 
\[
[\Sigma^{2n}\CP^2, \SU(n+1)] \to [\Sigma^{2n}\CP^2, \U(n+1)]
\]
is injective. Hence it suffices to show that
\[
i_*: [\Sigma^{2n}\CP^2, \SU(n)] \longrightarrow [\Sigma^{2n}\CP^2, \SU(n+1)]
\]
is injective. Since there is the fiber sequence
\[
\Omega\Smr^{2n+1} \longrightarrow \SU(n) \longrightarrow \SU(n+1) \longrightarrow \Smr^{2n+1},
\]
it suffices to prove that
\[
[\Sigma^{2n}\CP^2, \Omega\Smr^{2n+1}] = [\Sigma^{2n+1}\CP^2, \Smr^{2n+1}] = 0.
\]
%

Consider the defining cofiber sequence of $\Sigma^{2n+1}\CP^2$:
\[
\Smr^{2n+4} \stackrel{\eta}{\longrightarrow} \Smr^{2n+3} \longrightarrow \Sigma^{2n+1}\CP^2 \longrightarrow \Smr^{2n+5} \stackrel{\eta}{\longrightarrow} \Smr^{2n+4}.
\]
Applying the functor $[-, \Smr^{2n+1}]$ gives rise to an exact sequence:
\[
0 \longrightarrow [\Sigma^{2n+1}\CP^2, \Smr^{2n+1}] \longrightarrow [\Smr^{2n+3}, \Smr^{2n+1}]  \stackrel{\eta^*}{\longrightarrow} [\Smr^{2n+4}, \Smr^{2n+1}].
\]
The first group in the exact sequence is zero, since $[\Smr^{2n+5}, \Smr^{2n+1}] = \pi_4^{s} = 0$ (for $n \geq 3$). 
Also since 
\begin{enumerate}
 \item[-] $[\Smr^{2n+3}, \Smr^{2n+1}] = \pi_2^{s} \cong \ZZ/2 \cdot \eta^2$, and
 \item[-] $[\Smr^{2n+4}, \Smr^{2n+1}] = \pi_3^{s} \cong \ZZ/24$, which is 2-locally $\ZZ/8\cdot \nu$ and 3-locally $\ZZ/3 \cdot \alpha_1$,
\end{enumerate}
the homomorphism $\eta^*$ in the exact sequence takes the generator $\eta^2$ to $\eta^3 = 4\nu$, and is therefore injective. 
This forces $[\Sigma^{2n+1}\CP^2, \Smr^{2n+1}]$ to be the zero group.
\end{proof}

The advantage of working inside of $\U(n+1)$ is that it allows one to detect $\Im(\partial_k)_*$ by \mbox{$\Hmr\ZZ$-cohomology}. Indeed, writing $\V_{n+1}$ for the infinite Stiefel manifold $\U/\U(n+1)$, and $\pi$ for the standard quotient map $\U \rightarrow \V_{n+1}$, one has the fiber sequence
\[
\Omega\U \stackrel{\Omega\pi}{\longrightarrow} \Omega\V_{n+1} \longrightarrow \U(n+1) \longrightarrow \U\stackrel{\pi}{\longrightarrow} \V_{n+1}.
\]
We recall the $\Hmr\ZZ$-cohomology of some of the spaces involved:
\begin{itemize}
 \item[-] $\Hmr^*(\U; \ZZ)$ is the exterior algebra $\Lambda(x_1, x_3, x_5, \cdots)$, where $x_{2i+1}$ is of degree $2i+1$  and transgresses to the $(i+1)$-st universal Chern class $c_{i+1} \in \Hmr^{2i+2}(\BU; \ZZ)$.
 \item[-] $\Hmr^*(\V_{n+1}; \ZZ)$ is the exterior algebra $\Lambda(\bar{x}_{2n+3}, \bar{x}_{2n+5}, \cdots)$, where $\bar{x}_{2i+1}$ is of degree $2i+1$ (for $i \geq n+1$) and is such that $\pi^*(\bar{x}_{2i+1})  = x_{2i+1} \in \Hmr^{2i}(\U; \ZZ)$.
 \item[-] $\Hmr^*(\Omega\V_{n+1}; \ZZ)$ is the symmetric algebra $\ZZ[a_{2n+2}, a_{2n+4}, \cdots]$, where $a_{2i}$ is of degree $2i$ (for $i\geq n+1$) and transgresses to $\bar{x}_{2i+1} \in \Hmr^{2i+1}(\V_{n+1}; \ZZ)$.
\end{itemize}
We need the following result, originally proved in \cite[Proposition 3.1]{HK03}.

\begin{prop} \label[prop]{omegapistar}
Let $\ch_n$ denote the degree $2n$ component of the universal Chern character. Then
\begin{equation} \label{eq:chch}
(\Omega\pi)^*(a_{2n}) = n! \cdot \ch_n
\end{equation}
in $\Hmr^{2n}(\BU;\ZZ)$.
\end{prop}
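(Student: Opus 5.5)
We will prove \eqref{eq:chch} by first pulling the class $a_{2n}$ back along $\Omega\pi\colon \Omega\U \to \Omega\V_{n+1}$. Here $\Omega\U \simeq \ZZ\times\BU$ by Bott periodicity, and we restrict to the base component $\BU$. There we will check the identity on primitives, working rationally first and then over $\ZZ$. In this statement $n$ is a general index, with $a_{2n}\in \Hmr^{2n}(\Omega\V_{m}; \ZZ)$ for the appropriate Stiefel manifold and $2n$ in the range where $a_{2n}$ exists.

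The first step is to identify $a_{2n}$ as (up to sign) the cohomology suspension $\sigma(\bar x_{2n+1})$ of $\bar x_{2n+1}\in \Hmr^{2n+1}(\V;\ZZ)$. This is how it transgresses in the path-loop fibration, and it makes $a_{2n}$ primitive in degree $2n$. By naturality of the suspension,
\[
(\Omega\pi)^*(a_{2n}) = \sigma(\pi^*\bar x_{2n+1}) = \sigma(x_{2n+1}),
\]
where now $\sigma\colon \Hmr^{2n+1}(\U;\ZZ)\to \Hmr^{2n}(\Omega\U;\ZZ)$. So the problem reduces to computing $\sigma(x_{2n+1})$ in $\Hmr^{2n}(\BU;\ZZ)$ under the Bott equivalence $\Omega_0\U\simeq\BU$. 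This reduction makes the statement independent of the Stiefel manifold.

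Next we compute $\sigma(x_{2n+1})$ using the fact that $x_{2n+1}$ is the suspension of $c_{n+1}$, that is, $x_{2n+1}=\sigma(c_{n+1})$ for $\sigma\colon \Hmr^{2n+2}(\BU)\to \Hmr^{2n+1}(\U)$. Suspension kills decomposables, so $x_{2n+1}=\sigma(s_{n+1})/\text{(coefficient)}$, where $s_{n+1}=(n+1)!\,\ch_{n+1}$ is the Newton power sum. By Newton's identity $s_{n+1}=\pm(n+1)c_{n+1}+\text{decomposables}$, so $\sigma(c_{n+1})=\pm\frac{1}{n+1}\sigma(s_{n+1})$. Applying the double suspension gives
\[
\sigma^2(c_{n+1}) = \pm\frac{1}{n+1}\,\sigma^2\big((n+1)!\,\ch_{n+1}\big)= \pm\, n!\,\sigma^2(\ch_{n+1}).
\]

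The remaining input, and the main obstacle, is the Bott periodicity fact that the double suspension $\Hmr^{2n+2}(\BU;\QQ)\to\Hmr^{2n}(\Omega^2\BU;\QQ)=\Hmr^{2n}(\BU;\QQ)$ sends $\ch_{n+1}$ to $\ch_n$ (up to a global sign convention). We plan to prove it via K-theory. The Bott map $\Sigma^2\BU\to \BU$ represents the product of the reduced universal class with the Bott element $\beta\in\KUt(\Smr^2)$. Since $\ch$ is multiplicative and $\ch(\beta)$ is the generator of $\Hmr^2(\Smr^2;\ZZ)$, the pullback of $\ch_{n+1}$ is $u\otimes\ch_n$. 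The adjoint statement is exactly $\sigma^2(\ch_{n+1})=\ch_n$. Combining this with the previous step gives $(\Omega\pi)^*(a_{2n})=n!\,\ch_n$ rationally. Because $\Hmr^*(\BU;\ZZ)$ is torsion-free, the identity then holds integrally. The last thing to check is that the signs from the transgression conventions can be normalized so that $a_{2n}$ is chosen with sign $+$.
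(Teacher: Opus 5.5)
Your argument is correct, and it necessarily takes a different route from the paper's, because the paper gives no proof of this statement: it quotes it from Hamanaka--Kono \cite[Proposition~3.1]{HK03}. What you give is a self-contained derivation, and each step holds up:
\begin{itemize}
\item The identity $a_{2n}=\sigma(\bar x_{2n+1})$ holds exactly for the path--loop fibration, since there $\sigma=\delta^{-1}p^*$ is the inverse of the transgression.
\item Naturality of $\sigma$ reduces everything to $\sigma^2(c_{n+1})$ on $\Omega_0\U\simeq\BU$, and shows the Stiefel manifold plays no role. Your reading of $n$ as a free index is right, since the bottom class of $\Omega\V_{n+1}$ is $a_{2n+2}$.
\item Newton's identity produces the factor $n!$.
\item Multiplicativity of $\ch$ against the Bott class gives $\sigma^2(\ch_{n+1})=\ch_n$.
\item Torsion-freeness of $\Hmr^*(\BU;\ZZ)$ finishes the argument.
\end{itemize}

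A standard shorter alternative runs as follows. The class $(\Omega\pi)^*(a_{2n})$ is primitive, and the integral primitives in $\Hmr^{2n}(\BU;\ZZ)$ are $\ZZ\cdot n!\,\ch_n$. The coefficient is then $\pm1$, because the image of $\pi_{2n+1}\U\to\pi_{2n+1}(\U/\U(n))\cong\ZZ$ has index $|\pi_{2n}\U(n)|=n!$. That argument needs Bott's computation of $\pi_{2n}\U(n)$ as input, while yours recovers it.

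In either approach the sign is a normalization, and you should state yours precisely rather than leave it as a final check. Comparing $s_2=c_1^2-2c_2$ with $s_3=c_1^3-3c_1c_2+3c_3$ shows that Newton's identity contributes $(-1)^n$. So relative to transgression and Bott conventions whose signs do not depend on $n$, you get $\pm(-1)^n\,n!\,\ch_n$. Getting the sign $+$ for all $n$ therefore means choosing the generators $x_{2i+1}$ (or $a_{2i}$) degree by degree. This is harmless for the paper: the lattices it later builds from this formula and from $\tilde c^*(a_{2n})$ are unchanged when either coordinate changes sign.
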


To compute $[\Sigma^{2n}\CP^2, \U(n+1)]$, we map $\Sigma^{2n}\CP^2$ to the diagram of spaces:
\[
\begin{tikzcd}
\Omega\U \ar[r, "\Omega\pi"] & \Omega\V_{n+1} \ar[r] \ar[d, "a_{2n+2}\times a_{2n+4}"] & \U(n+1) \ar[r] & \U \ar[r, "\pi"] & \V_{n+1} \\
 & \K(\ZZ, 2n+2) \times \K(\ZZ, 2n+4) & & & 
\end{tikzcd}
\]
to obtain the following diagram of abelian groups:
\begin{equation} \label{eq:testUnplus1}
\begin{tikzcd}[column sep = tiny]
 \KUt^{0}(\Sigma^{2n}\CP^2) \ar[r, "(\Omega\pi)_*"] & {[\Sigma^{2n}\CP^2, \Omega\V_{n+1}]} \ar[r] \ar[d, "\lambda:= (a_{2n+2}\times a_{2n+4})_*"] & {[\Sigma^{2n}\CP^2, \U(n+1)]} \ar[r] & 0 \\
 & \Hmr^{2n+2}(\Sigma^{2n}\CP^2; \ZZ) \oplus \Hmr^{2n+4}(\Sigma^{2n}\CP^2; \ZZ) & &  
\end{tikzcd}
\end{equation}
where the top row is exact. Note that the last group in the top row is zero, as 
$[\Sigma^{2n}\CP^2, \U] \cong \KUt^{1}(\Sigma^{2n}\CP^2) = 0$.
We need to find the image of the homomorphism $\lambda$. The result, and hence later results, would depend on the parity of $n$, and we therefore treat them separately.

\subsection{The case when \texorpdfstring{$n$}{n} is odd} \label{subsec:oddn}
We first deal with the case when $n$ is odd.  

\begin{prop} \label[prop]{lambdainjnodd}
The homomorphism $\lambda$ in the diagram \eqref{eq:testUnplus1} is injective, whose image is
\[
\{(a, b)\in \ZZ\oplus \ZZ: a \equiv 0 \text{ mod } 2\}.
\]
\end{prop}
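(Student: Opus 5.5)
We will compute $\lambda$ by first identifying $[\Sigma^{2n}\CP^2, \Omega\V_{n+1}]$ and then reading off the two cohomology classes. The space $\V_{n+1}$ is $(2n+2)$-connected, with cells in dimensions $2n+3, 2n+5, \dots$, so $\Omega\V_{n+1}$ is $(2n+1)$-connected. Its first nontrivial homotopy groups are $\pi_{2n+2}\cong\ZZ$ and $\pi_{2n+4}\cong\ZZ$, with $\pi_{2n+3}=0$ and $\pi_{2n+5}$ possibly finite. Since $\Sigma^{2n}\CP^2$ has cells only in dimensions $2n+2$ and $2n+4$, we work through the cofiber sequence
\[
\Smr^{2n+3}\stackrel{\eta}{\to}\Smr^{2n+2}\to\Sigma^{2n}\CP^2\to\Smr^{2n+4}\stackrel{\eta}{\to}\Smr^{2n+3}.
\]
Mapping into $\Omega\V_{n+1}$ gives
\[
\pi_{2n+3}\to\pi_{2n+4}\to[\Sigma^{2n}\CP^2,\Omega\V_{n+1}]\to\pi_{2n+2}\stackrel{\eta^*}{\to}\pi_{2n+3}=0 .
\]
Because $\pi_{2n+3}=0$, this is a short exact sequence $0\to\ZZ\to[\Sigma^{2n}\CP^2,\Omega\V_{n+1}]\to\ZZ\to0$, so the group is free abelian of rank $2$. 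The map $\lambda$ is compatible with these filtrations. On the bottom cell, $a_{2n+2}$ is the generator of $\Hmr^{2n+2}(\Omega\V_{n+1})$ and the Hurewicz map is an isomorphism there. On the top cell, $a_{2n+4}$ restricted along $\Smr^{2n+4}\to\Sigma^{2n}\CP^2$ measures the Hurewicz image of $\pi_{2n+4}\Omega\V_{n+1}$.

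The key input is the Hurewicz map in degree $2n+4$ of $\Omega\V_{n+1}$. We compute it via the fibration to $\Smr^{2n+3}$ (the bottom cell of $\V_{n+1}$), or directly from the $(2n+6)$-skeleton of $\V_{n+1}$. That skeleton is $\Smr^{2n+3}\cup e^{2n+5}$, with attaching map $\eta$ or $0$ according to the parity of $n$. We determine this parity from $\Sq^2\bar x_{2n+3}=(n+1)\bar x_{2n+5}$, which comes from $\Sq^2 c_{n+2}=(n+2)c_{n+2}\cdot$ lower terms plus $c_1c_{n+2}$. Using the Wu formula $\Sq^2 c_{k}=c_1c_k+(k-1)c_{k+1}$ and transgression, we get $\Sq^2\bar x_{2n+3}=(n+1)\bar x_{2n+5}$ mod $2$. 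For $n$ odd this vanishes, so the skeleton splits as $\Smr^{2n+3}\vee\Smr^{2n+5}$ and $\pi_{2n+4}\Omega\V_{n+1}\to\Hmr_{2n+4}$ is an isomorphism. Thus $\lambda$ restricted to the subgroup from the top cell is an isomorphism onto $0\oplus\ZZ$, and the quotient map to $\ZZ$ is detected by $a_{2n+2}$ up to the index question.

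The main obstacle is determining which values of $a_{2n+2}$ occur, that is, the image on the bottom cell. The naive answer is all of $\ZZ$, but the claim says only even values occur. The point is that $a_{2n+2}$ must be evaluated on elements of $[\Sigma^{2n}\CP^2,\Omega\V_{n+1}]$, not on $\pi_{2n+2}$ alone. The cup structure forces a constraint: $a_{2n+2}$ is primitive, but the map must extend over the $(2n+4)$-cell. In $\Hmr^*(\Sigma^{2n}\CP^2)$ this constraint appears through $\Sq^2$. For $x\in\Hmr^{2n+2}$ the class $\Sq^2 x$ is the generator of $\Hmr^{2n+4}$ mod $2$, since $\eta$ attaches the top cell. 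On the other side, $\Sq^2 a_{2n+2}$ in $\Omega\V_{n+1}$ is the looping of $\Sq^2\bar x_{2n+3}$, which is $(n+1)a_{2n+4}\equiv0$ for $n$ odd. Naturality then gives $\lambda(f)_1\cdot\Sq^2(\text{gen})=0$ mod $2$, so $a\equiv0$ mod $2$. Conversely, every pair $(a,b)$ with $a$ even is realized, by comparing indices: the image has rank $2$, contains $0\oplus\ZZ$, and the top-cell extension class can be chosen freely. It remains to exhibit an element with $a=2$. We would take the composite $\Sigma^{2n}\CP^2\to\Smr^{2n+2}\to\Omega\V_{n+1}$ precomposed appropriately, or use \Cref{omegapistar} with $\KUt$ classes, to hit index exactly $2$. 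Injectivity of $\lambda$ follows from freeness together with the rank count.
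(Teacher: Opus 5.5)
Your $\Sq^2$ argument for the necessity of $a\equiv 0 \bmod 2$ is correct. The rest of the proposal, however, uses homotopy groups of $\Omega\V_{n+1}$ that are the ones for $n$ \emph{even}, not $n$ odd.

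\textbf{The gap.} As you yourself show, for $n$ odd $\Sq^2 a_{2n+2}=(n+1)a_{2n+4}=0$, so the $(2n+5)$-skeleton of $\Omega\V_{n+1}$ is $\Smr^{2n+2}\vee\Smr^{2n+4}$. Hence $\pi_{2n+3}(\Omega\V_{n+1})\cong\ZZ/2$, generated by $\iota\circ\eta$ with $\iota$ the bottom cell. This is consistent with $\pi_{2n+3}\U(n+1)\cong\ZZ/2$ for $n+1$ even. Also $\pi_{2n+4}(\Omega\V_{n+1})\cong\ZZ/2\oplus\ZZ$, with torsion generated by $\iota\circ\eta^2$, so the Hurewicz map in degree $2n+4$ is not an isomorphism.

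With your claimed $\pi_{2n+3}=0$, the cofiber sequence would make restriction to the bottom cell surjective onto $\pi_{2n+2}\cong\ZZ$. Odd values of $a$ would then occur, contradicting both the statement and your own $\Sq^2$ argument. You noticed this tension (``the naive answer is all of $\ZZ$'') but did not resolve it, so the argument as written is inconsistent. In particular, the short exact sequence $0\to\ZZ\to[\Sigma^{2n}\CP^2,\Omega\V_{n+1}]\to\ZZ\to 0$ is derived from false input. So is the freeness of rank $2$ that your injectivity argument depends on.

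The existence of an element with $a=2$ is also only gestured at. The $\KUt$ route cannot produce it, since every element in the image of $(\Omega\pi)_*$ has first coordinate divisible by $(n+1)!$.

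\textbf{The repair.} Redo the cofiber sequence with the correct groups.
\begin{itemize}
\item The map $\eta^*\colon\pi_{2n+2}\to\pi_{2n+3}$, $\iota\mapsto\iota\eta$, is onto $\ZZ/2$ with kernel $2\ZZ$. This is the homotopy-theoretic source of the parity condition. It also shows that $2\iota$ extends over the top cell, realizing $a=2$.
\item The map $(\Sigma\eta)^*\colon\pi_{2n+3}\to\pi_{2n+4}$ sends $\iota\eta$ to $\iota\eta^2$. So its cokernel is $\ZZ$, detected isomorphically by $a_{2n+4}$.
\end{itemize}
This gives $0\to\ZZ\to[\Sigma^{2n}\CP^2,\Omega\V_{n+1}]\to 2\ZZ\to 0$, with the two ends detected by the two components of $\lambda$. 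That yields injectivity and the image at once.

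This is essentially the paper's proof. The paper uses the wedge splitting of the skeleton and the absence of $(2n+5)$-cells to write the group as $[\Sigma^{2n}\CP^2,\Smr^{2n+2}]\oplus[\Sigma^{2n}\CP^2,\Smr^{2n+4}]\cong 2\ZZ\oplus\ZZ$, and computes each summand from the same cofiber sequence.
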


\begin{proof}
Note that $\Sigma^{2n}\CP^2$ is $(2n+4)$-dimensional, and that the $(2n+4)$-skeleton of $\Omega\V_{n+1}$ is $\Smr^{2n+2}\vee \Smr^{2n+4}$. Indeed, the $(2n+4)$-cell is attached trivially, since it is not detected by $\Sq^2$. Since $\Omega\V_{n+1}$ has no $(2n+5)$-cell, it follows that 
\[
[\Sigma^{2n}\CP^2, \Omega\V_{n+1}] \cong [\Sigma^{2n}\CP^2, \Smr^{2n+2}\vee \Smr^{2n+4}] \cong [\Sigma^{2n}\CP^2, \Smr^{2n+2}] \oplus [\Sigma^{2n}\CP^2, \Smr^{2n+4}].
\]
Consider the defining cofiber sequence for $\Sigma^{2n}\CP^2$:
\[
\Smr^{2n+3} \stackrel{\eta}{\longrightarrow} \Smr^{2n+2} \longrightarrow \Sigma^{2n}\CP^2  \longrightarrow \Smr^{2n+4} \stackrel{\eta}{\longrightarrow} \Smr^{2n+3}.
\]
Applying the functor $[-, \Smr^{2n+4}]$ gives rise to the exact sequence:
\[
0 \longrightarrow [\Smr^{2n+4}, \Smr^{2n+4}] \longrightarrow [\Sigma^{2n}\CP^2, \Smr^{2n+4}]  \longrightarrow 0
\]
from which it follows that $[\Sigma^{2n}\CP^2, \Smr^{2n+4}] \cong \pi_{2n+4}(\Smr^{2n+4}) \cong \ZZ$. 
Applying the functor $[-, \Smr^{2n+2}]$, we obtain the following exact sequence:
\[
[\Smr^{2n+3}, \Smr^{2n+2}] \stackrel{\eta^*}{\longrightarrow} [\Smr^{2n+4}, \Smr^{2n+2}] \longrightarrow [\Sigma^{2n}\CP^2, \Smr^{2n+2}] \longrightarrow [\Smr^{2n+2}, \Smr^{2n+2}] \stackrel{\eta^*}{\longrightarrow} [\Smr^{2n+3}, \Smr^{2n+2}].
\]
The first $\eta^*$ is an isomorphism, since $\pi_{2n+3}(\Smr^{2n+2}) = \ZZ/2\cdot \eta$ and $\pi_{2n+4}(\Smr^{2n+2}) = \ZZ/2\cdot \eta^2$. The last $\eta^*$ is surjective with kernel $2\ZZ \subset \ZZ = \pi_{2n+2}(\Smr^{2n+2})$, since $\pi_{2n+2}(\Smr^{2n+2}) = \ZZ \cdot 1$ and $\pi_{2n+3}(\Smr^{2n+2}) = \ZZ/2\cdot \eta$.
It follows that $[\Sigma^{2n}\CP^2, \Smr^{2n+2}] \cong 2 \ZZ$. To sum up:
\[
[\Sigma^{2n}\CP^2, \Omega\V_{n+1}] \cong [\Sigma^{2n}\CP^2, \Smr^{2n+2}] \oplus [\Sigma^{2n}\CP^2, \Smr^{2n+4}] \cong 2\ZZ \oplus \ZZ,
\]
and the result is proved.
\end{proof}

Given \Cref{lambdainjnodd}, elements of $[\Sigma^{2n}\CP^2, \Omega\V_{n+1}]$ can be identified as a pair of integers $(a, b)$ where $a$ is even.

\begin{cor} \label[cor]{Unplus1basis}
The abelian group $[\Sigma^{2n}\CP^2, \Omega\V_{n+1}]$ is free of rank $2$, whose basis can be taken as
$\{(2, 0), (0, 1)\}$, or $\{(2, k), (0, 1)\}$  for any $k\in \ZZ$.
\end{cor}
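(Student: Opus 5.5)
By \Cref{lambdainjnodd}, the map $\lambda$ identifies $[\Sigma^{2n}\CP^2, \Omega\V_{n+1}]$ with the subgroup
\[
L = \{(a,b)\in \ZZ\oplus\ZZ : a \equiv 0 \text{ mod } 2\}
\]
of $\ZZ\oplus\ZZ$, so the statement reduces to elementary lattice bookkeeping. First, $L$ is a subgroup of the free abelian group $\ZZ\oplus\ZZ$. It contains $(2,0)$ and $(0,1)$, and it has finite index $2$. Hence $L$ is free of rank $2$.

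Next I check that $\{(2,0),(0,1)\}$ is a basis. Any $(a,b)\in L$ has $a=2a'$, so it can be written uniquely as
\[
(a,b) = a'(2,0) + b(0,1).
\]
Uniqueness holds because $(2,0)$ and $(0,1)$ are linearly independent over $\ZZ$.

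For the second basis, fix $k\in\ZZ$ and observe that $(2,k) = (2,0) + k(0,1)$. The change-of-basis matrix from $\{(2,0),(0,1)\}$ to $\{(2,k),(0,1)\}$ is therefore
\[
\begin{pmatrix} 1 & 0 \\ k & 1 \end{pmatrix},
\]
which is unimodular, with determinant $1$. So $\{(2,k),(0,1)\}$ is also a basis of $L$. Concretely, the inverse decomposition is
\[
(a,b) = \tfrac{a}{2}(2,k) + \bigl(b - \tfrac{ak}{2}\bigr)(0,1),
\]
and both coefficients are integers because $a$ is even.

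There is no real obstacle here. The only point needing care is that we use $\lambda$, an isomorphism onto its image, to transport the basis statement back to $[\Sigma^{2n}\CP^2, \Omega\V_{n+1}]$, and \Cref{lambdainjnodd} supplies exactly that.
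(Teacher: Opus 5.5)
Your proposal is correct and matches the paper, which states this corollary without separate proof as an immediate consequence of \Cref{lambdainjnodd}. You transport along the injective map $\lambda$ onto $\{(a,b) : a \equiv 0 \text{ mod } 2\}$, read off the basis $\{(2,0),(0,1)\}$, and pass to $\{(2,k),(0,1)\}$ by a unimodular change of basis; your explicit decomposition $(a,b) = \tfrac{a}{2}(2,k) + \bigl(b - \tfrac{ak}{2}\bigr)(0,1)$ is exactly the check the paper leaves implicit.
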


In particular, $u:=(2, n+2)$ and $v:=(0, 1)$ form a basis for $[\Sigma^{2n}\CP^2, \Omega\V_{n+1}]$.
We now prove:

\begin{prop} \label[prop]{imomegapistarnodd}
The image of the homomorphism
\[
(\Omega\pi)_*: \KUt^{0}(\Sigma^{2n}\CP^2) \longrightarrow [\Sigma^{2n}\CP^2, \Omega\V_{n+1}]
\] 
is the subgroup of $[\Sigma^{2n}\CP^2, \Omega\V_{n+1}]$ generated by $\frac{(n+1)!}{2} u$ and $(n+2)! v$.
\end{prop}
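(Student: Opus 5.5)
Since $\lambda$ is injective by \Cref{lambdainjnodd}, it suffices to compute the composite $\lambda\circ(\Omega\pi)_*$. By \Cref{omegapistar}, this composite sends $\xi\in\KUt^0(\Sigma^{2n}\CP^2)$ to
\[
\bigl((n+1)!\,\ch_{n+1}(\xi),\; (n+2)!\,\ch_{n+2}(\xi)\bigr)\in \Hmr^{2n+2}(\Sigma^{2n}\CP^2;\ZZ)\oplus\Hmr^{2n+4}(\Sigma^{2n}\CP^2;\ZZ)\cong \ZZ\oplus\ZZ.
\]
Here we identify $\Hmr^{2n+2}$ and $\Hmr^{2n+4}$ with $\ZZ$ via the suspended generators. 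So the task is to describe the Chern character on $\KUt^0(\Sigma^{2n}\CP^2)$ in these coordinates, and then rewrite the image in the basis $u,v$.

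First, $\KUt^0(\Sigma^{2n}\CP^2)\cong \widetilde{K}^0(\CP^2)\cong\ZZ^2$ by Bott periodicity, since $n$ is even-suspension-compatible. Take the generators $\beta^n\otimes(H-1)$ and $\beta^n\otimes(H-1)^2$, where $\beta$ is the Bott class. The Chern character commutes with Bott periodicity, because $\ch(\beta)$ is the suspension generator with no denominators. Hence $\ch$ of $\beta^n\otimes y$ is the $2n$-fold suspension of $\ch(y)$. Write $t$ for the generator of $\Hmr^2(\CP^2)$. Then
\[
\ch(H-1)=e^t-1=t+\tfrac{t^2}{2},\qquad \ch((H-1)^2)=(e^t-1)^2=t^2 .
\]
Under the identification, these give the coordinates $(1,\tfrac12)$ and $(0,1)$, where the first coordinate is $\ch_{n+1}$ and the second is $\ch_{n+2}$. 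Therefore the two generators map to
\[
\Bigl((n+1)!,\ \tfrac{(n+2)!}{2}\Bigr)\quad\text{and}\quad \bigl(0,\ (n+2)!\bigr).
\]

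Now express these in the basis $u=(2,n+2)$, $v=(0,1)$. The second image is exactly $(n+2)!\,v$. For the first, $\tfrac{(n+1)!}{2}u=\bigl((n+1)!,\ \tfrac{(n+1)!(n+2)}{2}\bigr)=\bigl((n+1)!,\tfrac{(n+2)!}{2}\bigr)$, which is exactly the first image. This is precisely why the basis $u=(2,n+2)$ was chosen in \Cref{Unplus1basis}. Since $n+1$ is even, $(n+1)!/2$ is an integer. The image is the span of the images of the two generators, which gives the claim.

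The main obstacle is the bookkeeping of sign and normalization conventions, so that the cohomology identifications match those used in \Cref{lambdainjnodd}. Specifically, one must check two things. First, the suspension isomorphism $\Hmr^{2n+2j}(\Sigma^{2n}\CP^2)\cong\Hmr^{2j}(\CP^2)$ should be compatible with how $a_{2n+2}$ and $a_{2n+4}$ evaluate on the bottom and top cells. Second, the Bott class should have $\ch(\beta)$ equal to the exact generator. A sign ambiguity would only replace $u$ by a unit multiple and would not change the subgroup, so the result is robust. I would also double-check that \Cref{omegapistar} applies in the degrees $n+1$ and $n+2$ with the classes $a_{2i}$ pulled back from $\V_{n+1}$, including that it is compatible with the indexing of $\Omega\pi$ versus $\BU\simeq\Omega_0\U$.
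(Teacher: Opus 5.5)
Your proposal is correct and is essentially the paper's proof. Both use injectivity of $\lambda$ and \Cref{omegapistar} to send $\beta^n x$ and $\beta^n x^2$ to $\left((n+1)!, (n+2)!/2\right)$ and $(0,(n+2)!)$, via $\ch(x)=t+t^2/2$ and $\ch(x^2)=t^2$, and then recognize these as $\frac{(n+1)!}{2}u$ and $(n+2)!v$. The phrase ``$n$ is even-suspension-compatible'' is meaningless but harmless: Bott periodicity applies for every $n$ because the suspension degree $2n$ is even.
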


\begin{proof}
The abelian group $\KUt^{0}(\Sigma^{2n}\CP^2)$ is free of rank $2$ generated by $\beta^nx$ and $\beta^nx^2$, where $\beta$ is the Bott element. We find that 
\[
\lambda \circ \Omega\pi \circ \beta^nx = (a_{2n+2} \circ \Omega\pi \circ \beta^n x, \, a_{2n+4} \circ \Omega\pi \circ \beta^n x), 
\]
which equals
\[
\left( (n+1)! \ch_{n+1}(\beta^n x), \, (n+2)! \ch_{n+2}(\beta^n x) \right)
\]
by the equation \eqref{eq:chch}. Denote by $\sigma$ the (cohomological) suspension isomorphism and by $t$ the generator of $\Hmr^2(\CP^2;\ZZ) \cong \ZZ$. Since $\ch_{n+1}(\beta^nx) = \sigma^{2n}t$ and $\ch_{n+2}(\beta^nx) = \sigma^{2n}t^2/2$, 
it follows that
\[
\lambda \circ \Omega\pi \circ \beta^n x = \left( (n+1)!, \, (n+2)!/2 \right).
\]
Similarly, using $\ch(x^2) = \ch(x)^2 = (t + \frac{t^2}{2})^2 = t^2 \in \Hmr^*(\CP^2;\QQ)$, 
one finds that
\[
\lambda \circ \Omega\pi \circ \beta^n x^2 = (0, \, (n+2)!).
\]
The result follows as $\left( (n+1)!, \, (n+2)!/2 \right) = \frac{(n+1)!}{2}u$ and $(0, (n+2)!) = (n+2)!v$.
\end{proof}

\begin{cor}
The abelian group $[\Sigma^{2n}\CP^2, \U(n+1)]$ is isomorphic to $\ZZ/\frac{(n+1)!}{2} \oplus \ZZ/(n+2)!$.
\end{cor}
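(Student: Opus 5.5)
The corollary follows from the exact top row of diagram \eqref{eq:testUnplus1}. Since $[\Sigma^{2n}\CP^2, \U] \cong \KUt^1(\Sigma^{2n}\CP^2) = 0$, the map $[\Sigma^{2n}\CP^2, \Omega\V_{n+1}] \to [\Sigma^{2n}\CP^2, \U(n+1)]$ is surjective. Its kernel is the image of $(\Omega\pi)_*$. Hence
\[
[\Sigma^{2n}\CP^2, \U(n+1)] \cong [\Sigma^{2n}\CP^2, \Omega\V_{n+1}] \big/ \Im(\Omega\pi)_*.
\]
One should check that the exact sequence of pointed sets is an exact sequence of groups here. It is, because the maps come from a fiber sequence of loop spaces, or at least of H-spaces in the relevant range. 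In particular, exactness at $[\Sigma^{2n}\CP^2, \Omega\V_{n+1}]$ identifies the fibers of the map with cosets of $\Im(\Omega\pi)_*$, so the quotient description is valid.

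By \Cref{Unplus1basis}, the group $[\Sigma^{2n}\CP^2, \Omega\V_{n+1}]$ is free abelian on $u=(2,n+2)$ and $v=(0,1)$. By \Cref{imomegapistarnodd}, $\Im(\Omega\pi)_*$ is the subgroup generated by $\frac{(n+1)!}{2}u$ and $(n+2)!\,v$. These are scalar multiples of the two basis vectors, so the quotient is
\[
\ZZ u/\tfrac{(n+1)!}{2}\ZZ u \;\oplus\; \ZZ v/(n+2)!\,\ZZ v \;\cong\; \ZZ/\tfrac{(n+1)!}{2} \oplus \ZZ/(n+2)!.
\]
This is a diagonal Smith normal form, so no further reduction is needed. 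Note also that $(n+1)!/2$ is an integer, so the first summand makes sense.

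The only point needing care is the first step: that the top row of \eqref{eq:testUnplus1} is exact as a sequence of abelian groups and that surjectivity onto $[\Sigma^{2n}\CP^2,\U(n+1)]$ holds. Both follow from the Puppe sequence of the fibration $\U(n+1)\to\U\to\V_{n+1}$ together with the vanishing of $\KUt^1(\Sigma^{2n}\CP^2)$. The rest is the direct computation above.
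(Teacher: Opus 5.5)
Your argument is correct and is essentially the paper's proof: the exact top row of \eqref{eq:testUnplus1}, together with $[\Sigma^{2n}\CP^2,\U]=0$, identifies the group with the cokernel of $(\Omega\pi)_*$. With respect to the basis $\{u,v\}$ of \Cref{Unplus1basis}, the image from \Cref{imomegapistarnodd} is already diagonal, so the quotient is $\ZZ/\frac{(n+1)!}{2}\oplus\ZZ/(n+2)!$. Your remarks on exactness as groups make explicit what the paper leaves implicit, and, as in the paper, the argument is for $n$ odd, which is where that basis applies.
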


\begin{proof}
Note that $[\Sigma^{2n}\CP^2, \U(n+1)]$ is the cokernel of $(\Omega\pi)_*$ by the diagram \eqref{eq:testUnplus1}, and that $\{u, v\}$ is a basis for $[\Sigma^{2n}\CP^2, \Omega\V_{n+1}]$ by \Cref{Unplus1basis}. Now \Cref{imomegapistarnodd} implies the result.
\end{proof}

We now return to the main goal of computing $\Im (\partial_k)_*$ from the exact sequence \eqref{eq:lesn}.
We first make a list of general observations. 

\begin{enumerate}[label=(\roman*)]
 \item The abelian group $[\Sigma\CP^2, \SU(n)]$ is in the stable range, and the exact sequence \eqref{eq:lesn} is therefore of the form
\[
\begin{tikzcd}
\KUt^0(\CP^2) \ar[r, "(\partial_k)_*"'] & {[\Sigma^{2n}\CP^2, \SU(n)]} \ar[r] & {[\Sigma\CP^2, \B\Gcal_k]} \ar[r] & 0
\end{tikzcd}
\]
where $(\partial_k)_*$ behaves as:
\[
\alpha\in \KUt^0(\CP^2) \longmapsto \langle k\epsilon, \alpha \rangle \in [\Sigma^{2n}\CP^2, \SU(n)].
\]
Since $\KUt^0(\CP^2)$ is additively generated by $x = \upgamma - 1$ and $x^2$, the image of $(\partial_k)_*$ is generated by $(\partial_k)_*(x) = \langle k\epsilon, \bar{x} \rangle = c\circ (k\epsilon \sma \bar{x})$ and $(\partial_k)_*(x^2) = \langle k\epsilon, \bar{x}' \rangle = c\circ (k\epsilon \sma \bar{x}')$, which are composites:
\[
\Sigma^{2n}\CP^2 = \Smr^{2n-1} \sma \Sigma\CP^2 \longrightarrow \SU(n)\sma \SU(n) \longrightarrow \SU(n).
\]
Here we write $\bar{x}: \Sigma\CP^2 \rightarrow \SU(n)$ for the adjoint of $x$, and $\bar{x}'$ for the adjoint of $x^2$.
Note that $\bar{x}$ is the standard inclusion and $\bar{x}'$ factors through the top cell.
\smallskip
\item Since $i_*$ is an injection by \Cref{subpresentn}, the size of 
\[
\Im (\partial_k)_* \subset [\Sigma^{2n}\CP^2, \SU(n)]
\]
is the same as the size of 
\[
\Im i_*\circ (\partial_k)_*  \subset [\Sigma^{2n}\CP^2, \U(n+1)].
\]
\item The linearity of the Samelson product implies that $\partial_k = k\partial_1$, so the computation can be reduced to the case $k=1$.
\end{enumerate}

\smallskip

Consider the following commutative diagram:
\begin{equation} \label{eq:commdiagn}
\begin{tikzcd}
 \SU(n) \sma \SU(n) \ar[r, "i\sma i"] \ar[d, "c"'] & \U(n+1) \sma \U(n+1) \ar[d, "c"] \ar[r, dashed, "\exists \tilde{c}"] & \Omega\V_{n+1}  \ar[dl, "j"] \\
 \SU(n) \ar[r, "i"'] & \U(n+1) & 
\end{tikzcd}
\end{equation}
where $i$ is the standard inclusion $\SU(n) \rightarrow \U(n+1)$, and $c$ denotes the map taking commutator. Note that the composite
\[
\U(n+1) \sma \U(n+1) \stackrel{c}{\longrightarrow} \U(n+1) \longrightarrow \U
\]
is null-homotopic since $\U$ is abelian. Therefore the commutator map $c: \U(n+1) \sma \U(n+1) \rightarrow \U(n+1)$ lifts to $\tilde{c}: \U(n+1) \sma \U(n+1) \rightarrow \Omega\V_{n+1}$ along the map $j: \Omega\V_{n+1} \rightarrow \U(n+1)$ in the fiber sequence 
\[
\Omega\U \stackrel{\Omega\pi}{\longrightarrow} \Omega\V_{n+1} \stackrel{j}{\longrightarrow} \U(n+1) \longrightarrow \U\stackrel{\pi}{\longrightarrow} \V_{n+1}.
\]

We map $\Sigma^{2n}\CP^2$ to the diagram \eqref{eq:commdiagn}.
Let $N_k$ be the subgroup of $[\Sigma^{2n}\CP^2, \U(n+1)]$ generated by
\[
i\circ \langle k\epsilon, \bar{x} \rangle = i \circ c \circ (k\epsilon \sma \bar{x}) \quad \text{and} \quad i\circ \langle k\epsilon, \bar{x}' \rangle = i \circ c \circ (k\epsilon \sma \bar{x}').
\]
The size of $\Im (\partial_k)_*$ is the same as that of $N_k$. 
Because the square in \eqref{eq:commdiagn} commutes, we have
\[
i\circ \langle k\epsilon, \bar{x} \rangle = c \circ ((i\circ k\epsilon) \sma (i\circ \bar{x})) \quad \text{and} \quad i\circ \langle k\epsilon, \bar{x}' \rangle = c \circ ((i\circ k\epsilon) \sma (i\circ \bar{x}')).
\]
Let $M_k$ be the subgroup of $[\Sigma^{2n}\CP^2, \Omega\V_{n+1}]$ generated by
\[
\tilde{c} \circ ((i\circ k\epsilon) \sma (i\circ \bar{x})) \quad \text{and} \quad \tilde{c} \circ ((i\circ k\epsilon) \sma (i\circ \bar{x}')).
\]
We have a diagram of abelian groups:
\begin{equation}
\begin{tikzcd}[column sep = small]
{[\Sigma^{2n}\CP^2, \Omega\U]} \ar[r, "(\Omega\pi)_*"'] & {[\Sigma^{2n}\CP^2, \Omega\V_{n+1}]} \ar[r, "j_*"'] & {[\Sigma^{2n}\CP^2, \U(n+1)]} \ar[r] & {[\Sigma^{2n}\CP^2, \U]} = 0 \\
 & M_k \ar[u, hook] & N_k \ar[u, hook] & 
\end{tikzcd}
\end{equation}
where the top row is exact and the vertical maps are subgroup inclusions. It follows that
\[
N_k \cong M_k/\left( M_k\cap \Ker j_* \right) \cong M_k/\left( M_k\cap \Im (\Omega\pi)_* \right).
\]
Therefore, to find the size of $\Im (\partial_k)_*$, which is the size of $N_k$, it suffices to find the index of $M_k\cap \Im (\Omega\pi)_*$ as a subgroup of $M_k$.

\smallskip

Let us first work out the case $k=1$. Recall that elements of $[\Sigma^{2n}\CP^2, \Omega\V_{n+1}]$ correspond exactly to pairs of integers $(a, b)$ with $a$ even, by \Cref{lambdainjnodd}.

\begin{prop} \label[prop]{M1basisnodd}
The subgroup $M_1 \subset [\Sigma^{2n}\CP^2, \Omega\V_{n+1}]$ is generated by $((n-1)!, (n-1)!)$ and $(0, 2\cdot (n-1)!)$.
\end{prop}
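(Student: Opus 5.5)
Since $\lambda$ is injective by \Cref{lambdainjnodd}, each generator of $M_1$ is determined by the pair of integers it gives after applying $\lambda$. So the plan is to compute
\[
\lambda\circ\tilde c\circ((i\circ\epsilon)\sma(i\circ\bar x)) \quad\text{and}\quad \lambda\circ\tilde c\circ((i\circ\epsilon)\sma(i\circ\bar x'))
\]
in cohomology. Concretely, I would pull back $a_{2n+2}$ and $a_{2n+4}$ along $\tilde c$, and then along $(i\circ\epsilon)\sma(i\circ\bar x)$ and $(i\circ\epsilon)\sma(i\circ\bar x')$.

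\textbf{Step 1: the commutator lift.} The key input is the Hamanaka--Kono formula for the lift of the commutator map through $\Omega\V_{n+1}$. Modulo decomposables and up to a sign, it reads
\[
\tilde c^*(a_{2m}) = \sum_{i+j=m+1} x_{2i-1}\otimes x_{2j-1}\in \Hmr^{2m}(\U(n+1)\sma\U(n+1);\ZZ).
\]
I would either quote this from \cite{HK03,HK07} or rederive it. The rederivation compares transgressions in the fibration $\Omega\V_{n+1}\to\U(n+1)\to\U$, using \Cref{omegapistar} and the primitivity of the $x_{2i-1}$. This is the step I expect to be the main obstacle, mainly in pinning down the indexing and the sign convention. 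The sign is harmless for the claimed generating set, since negating a generator does not change the subgroup.

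\textbf{Step 2: restrict to the factors.} For the sphere factor, $\Smr^{2n-1}$ has only one nontrivial positive-degree cohomology group. Hence only the terms of the formula involving $x_{2n-1}$ in the first tensor factor can survive. Moreover, the generator $\epsilon$ of $\pi_{2n-1}\SU(n)$ pulls back $x_{2n-1}$ to $(n-1)!$ times the generator of $\Hmr^{2n-1}(\Smr^{2n-1};\ZZ)$. This is the classical Bott divisibility statement; it can also be read off from \Cref{omegapistar}, which gives $x_{2m-1}\mapsto (m-1)!\,\sigma\ch_{m-1}$ on adjoints of $\KUt$-classes.

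For the $\Sigma\CP^2$ factor, I would apply the same rule $x_{2m-1}\mapsto (m-1)!\,\sigma\ch_{m-1}(\xi)$ for the adjoint of $\xi\in\KUt^0(\CP^2)$.
\begin{enumerate}
\item For $\xi = x$, with $\ch(x) = t + t^2/2$: $\bar x^*(x_3) = \sigma t$ and $\bar x^*(x_5) = \sigma t^2$.
\item For $\xi = x^2$, with $\ch(x^2) = t^2$: $\bar x'^*(x_3) = 0$ and $\bar x'^*(x_5) = 2\sigma t^2$.
\end{enumerate}

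\textbf{Step 3: assemble.} The only surviving terms are $x_{2n-1}\otimes x_3$ in degree $2n+2$ and $x_{2n-1}\otimes x_5$ in degree $2n+4$. Combining these with Step 2 gives
\[
\lambda\circ\tilde c\circ((i\circ\epsilon)\sma(i\circ\bar x)) = ((n-1)!,\,(n-1)!), \qquad \lambda\circ\tilde c\circ((i\circ\epsilon)\sma(i\circ\bar x')) = (0,\,2\cdot(n-1)!).
\]
As a consistency check, the first entry $(n-1)!$ is even for $n\ge3$, as \Cref{lambdainjnodd} requires. By the injectivity of $\lambda$, these two pairs generate $M_1$, which is the statement of \Cref{M1basisnodd}.
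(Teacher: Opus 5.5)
Your proposal is correct and follows essentially the same route as the paper. Both apply the injective map $\lambda$, use the Hamanaka--Kono formula for $\tilde{c}^*(a_{2m})$, and pull back along $(i\circ\epsilon)\wedge(i\circ\bar{x})$ and $(i\circ\epsilon)\wedge(i\circ\bar{x}')$ using $\epsilon^*x_{2n-1}=(n-1)!\,\iota$ and the stated values of $\bar{x}^*$, $\bar{x}'^*$ on $x_3,x_5$. The one difference is that you justify those pullback values via the Chern character rule $x_{2m-1}\mapsto (m-1)!\,\sigma(\mathrm{ch}_{m-1})$, whereas the paper simply asserts them.
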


\begin{proof}
The job is to calculate 
\[
\lambda \circ \tilde{c} \circ ((i\circ \epsilon) \sma (i\circ \bar{x})) \quad \text{and} \quad \lambda \circ \tilde{c} \circ ((i\circ \epsilon) \sma (i\circ \bar{x}')).
\]
We use the Hamanaka--Kono result \cite[Proposition 5.2]{HK03} that
\[
\tilde{c}^*(a_{2n}) = \sum_{j+k = n-1} x_{2j+1} \otimes x_{2k+1}.
\]
In our case, $\lambda = (a_{2n+2}\times a_{2n+4})_*$, so
\[
\lambda \circ \tilde{c} \circ ((i\circ \epsilon) \sma (i\circ \bar{x})) = (\epsilon^*x_{2n-1}\otimes \bar{x}^*x_3, \, \epsilon^*x_{2n-1}\otimes \bar{x}^*x_5).
\]
Since $\epsilon^*x_{2n-1} = (n-1)! \iota$ (where $\iota\in \Hmr^{2n-1}(\Smr^{2n-1}; \ZZ)$ is the generator), $\bar{x}^*x_3 = \sigma t$ and $\bar{x}^*x_5 = \sigma t^2$ (where $\sigma$ denotes the suspension isomorphism), we find that 
\[
\lambda \circ \tilde{c} \circ ((i\circ \epsilon) \sma (i\circ \bar{x})) = ((n-1)!, (n-1)!).
\]
Similarly, since $(\bar{x}')^*x_3 = 0$ and $(\bar{x}')^*x_5 = 2 \sigma t^2$, we find that
\[
\lambda \circ \tilde{c} \circ ((i\circ \epsilon) \sma (i\circ \bar{x})) = (0, 2\cdot (n-1)!).
\]
The proof is now complete.
\end{proof}

Write 
$\alpha = ((n-1)!, (n-1)!)$ and $\beta = (0, 2\cdot (n-1)!)$. 
\Cref{M1basisnodd} says that $\{\alpha, \beta\}$ is a basis for $M_1$.
Recall that $u=(2, n+2)$ and $v=(0, 1)$, which form a basis for $[\Sigma^{2n}\CP^2, \Omega\V_{n+1}]$. Note that
\[
4\alpha + n\beta = 2 \cdot (n-1)! \cdot u,
\]
and that
\[
n\alpha + \frac{n^2-1}{4}\beta = \frac{n!}{2}u - \frac{(n-1)!}{2}v = \frac{(n-1)!}{2} \cdot (nu - v).
\]
So if we let $u':=u$, and $v':=nu-v$, then $\{u', v'\}$ is also basis for the  $[\Sigma^{2n}\CP^2, \Omega\V_{n+1}]$, as the change-of-basis matrix has determinant $-1$. Also note that
\[
\det \begin{bmatrix}
4 &  n \\
n & \frac{n^2-1}{4}
\end{bmatrix} = -1.
\]
So $\{2\cdot (n-1)! \cdot u', \, \frac{(n-1)!}{2}\cdot v' \}$ is a basis for $M_1$. 
Finally, note that
\[
\left(\frac{(n+1)!}{2}u, (n+2)!v\right) \begin{bmatrix}
1 & 2n(n+2) \\
0 & -1
\end{bmatrix} = \left(\frac{(n+1)!}{2}u', (n+2)!v'\right)
\]
where the change-of-basis matrix above has determinant $-1$. So $\{\frac{(n+1)!}{2}u', \, (n+2)!v'\}$ forms a basis for $\Im (\Omega\pi)_*$ by \Cref{imomegapistarnodd}.
To sum up, we have proved:

\begin{prop} \label[prop]{basesnodd}
A $\ZZ$-basis for the rank-two free abelian group $[\Sigma^{2n}\CP^2, \Omega\V_{2n+1}]$ and its related subgroups, can be presented as follows:

\smallskip

\begin{center}
\begin{tabular}{c||c}
\hline
Group & Basis \\
\hline
$[\Sigma^{2n}\CP^2, \Omega\V_{n+1}]$ & $\{u', v'\}$ \\
\hline
$\Im (\Omega\pi)_*$ & $\{\frac{(n+1)!}{2}u', (n+2)!v'\}$ \\
\hline
$M_1$ & $\{2\cdot (n-1)! u', \, \frac{(n-1)!}{2} v'\}$ \\
\hline
$M_k$ & $\{2\cdot (n-1)! ku', \, \frac{(n-1)!}{2} kv'\}$ \\
\hline
\end{tabular}
\end{center}
\end{prop}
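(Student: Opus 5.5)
The plan is to assemble the table one row at a time from the results just established, working throughout in the coordinates $(a,b)$ provided by \Cref{lambdainjnodd}. Since $\lambda$ is injective with image $\{(a,b): a \equiv 0 \bmod 2\}$, every claim becomes a statement about sublattices of $\ZZ^2$. Each claim is then checked by expressing the proposed generators in terms of a known basis and verifying that the change-of-basis matrix is unimodular, or that it has the appropriate diagonal form. (The subscript $\V_{2n+1}$ in the statement should read $\V_{n+1}$.)

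The first row says $\{u',v'\}$ is a basis of the whole group. By \Cref{Unplus1basis}, $\{u,v\}$ with $u=(2,n+2)$, $v=(0,1)$ is a basis. We have $u'=u$ and $v'=nu-v$, so the transition matrix is $\begin{bmatrix}1&n\\0&-1\end{bmatrix}$, which has determinant $-1$. Hence $\{u',v'\}$ is again a basis.

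The second row concerns $\Im(\Omega\pi)_*$. \Cref{imomegapistarnodd} says it is generated by $\frac{(n+1)!}{2}u$ and $(n+2)!v$. We rewrite $(n+2)!\,v'=(n+2)!\,n\,u-(n+2)!\,v$. Since $(n+2)!\,n = 2n(n+2)\cdot\frac{(n+1)!}{2}$, this is an integral combination of the old generators. Conversely, $(n+2)!\,v$ is recovered from $(n+2)!\,v'$ and $\frac{(n+1)!}{2}u'$. The resulting matrix is the displayed one with determinant $-1$, so the two generating sets span the same subgroup.

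The third row, for $M_1$, is the main step. By \Cref{M1basisnodd}, $M_1$ has basis $\alpha=((n-1)!,(n-1)!)$ and $\beta=(0,2(n-1)!)$. I will verify directly in coordinates the two identities $4\alpha+n\beta=2(n-1)!\,u'$ and $n\alpha+\frac{n^2-1}{4}\beta=\frac{(n-1)!}{2}v'$. For the first, the left side is $(4(n-1)!,\,(4+2n)(n-1)!)$, which equals $2(n-1)!(2,n+2)$. For the second, $v'=(2n,\,n(n+2)-1)=(2n,(n+1)^2-2)$, and both sides agree after a short computation. Here $n$ odd is needed for $\frac{n^2-1}{4}$ to be an integer, and $\frac{(n-1)!}{2}$ is integral for $n\ge3$. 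The coefficient matrix $\begin{bmatrix}4&n\\n&\frac{n^2-1}{4}\end{bmatrix}$ has determinant $n^2-1-n^2=-1$. It is therefore invertible over $\ZZ$, so $\{2(n-1)!\,u',\frac{(n-1)!}{2}v'\}$ is a basis of $M_1$.

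The last row, for $M_k$, follows from linearity. The generators of $M_k$ are $\tilde c\circ((i\circ k\epsilon)\sma(i\circ \bar x))$ and the analogous composite with $\bar x'$. The map $(i\circ k\epsilon)\sma -$ is bilinear in the sphere variable on the suspension $\Sigma^{2n}\CP^2$, which has a co-H structure. Equivalently, the $\lambda$-values computed in \Cref{M1basisnodd} scale by $k$, because $(k\epsilon)^*x_{2n-1}=k(n-1)!\,\iota$. Hence $M_k=kM_1$, and the basis is multiplied by $k$. The only point requiring care is this linearity of the composite in $k\epsilon$. I would justify it via the cohomological computation, which is legitimate because $\lambda$ is injective.
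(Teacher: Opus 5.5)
Your proposal is correct and follows essentially the same route as the paper. You use the same change of basis $u'=u$, $v'=nu-v$, the same unimodular recombinations $4\alpha+n\beta$ and $n\alpha+\frac{n^2-1}{4}\beta$ for $M_1$, and the same determinant $-1$ matrix for the image of $(\Omega\pi)_*$. You also obtain $M_k=kM_1$ by linearity in $k$; your justification through the $k$-scaling of the $\lambda$-values together with injectivity of $\lambda$ spells out rigorously what the paper only states.
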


We are now ready to prove our first main result, \Cref{thm:lowerboundnodd}.

\begin{proof}[Proof of \Cref{thm:lowerboundnodd}]
By \Cref{basesnodd}, the index of $M_k\cap \Im (\Omega\pi)_*$ in $M_k$ equals
\begin{align*}
&  \frac{\lcm\left((n+1)!/2, \, 2\cdot (n-1)!k\right)}{2(n-1)!k} \cdot \frac{\lcm\left((n+2)!, ((n-1)!/2)k \right)}{((n-1)!/2)k} \\[2mm]
 = & \frac{(n+1)!/2}{\gcd\left( (n+1)!/2, \, 2 (n-1)!k \right)} \cdot \frac{(n+2)!}{ \gcd \left( (n+2)!, ((n-1)!/2)k \right)} \\[2mm]
 = & \frac{n(n+1)/2}{\gcd\left(n(n+1)/2, 2k\right)} \cdot \frac{2n(n+1)(n+2)}{\gcd\left(2n(n+1)(n+2), k \right)}.
\end{align*}
Note that
\[
\gcd\left(n(n+1)/2, 2k\right) = \begin{cases}
\gcd\left(n(n+1)/2, k\right) & \text{if } n\equiv 1 \text{ mod } 4 \\
2 \gcd\left(n(n+1)/4, k\right) & \text{if } n\equiv 3 \text{ mod } 4.
\end{cases}
\]
Since both $n(n+1)/2$ and $n(n+1)/4$ can divide $2n(n+1)(n+2)$, we conclude that if $\Gcal_k\simeq \Gcal_{\ell}$ then $\gcd(2n(n+1)(n+2), k) = \gcd(2n(n+1)(n+2), \ell)$.
\end{proof}

\smallskip

\subsection{The case when \texorpdfstring{$n$}{n} is even} \label{subsec:evenn}
We now deal with the case when $n$ is even.
In this case, the $(2n+4)$-skeleton of $\Omega\V_{n+1}$ is no longer a wedge of two spheres but instead (a shift of) the cone of $\eta$, since its bottom cohomology supports a non-trivial action of $\Sq^2$.
This results in a different description of the image of $\lambda$ in the diagram \eqref{eq:testUnplus1}, already established by Hamanaka--Kono \cite[Lemma~2.1]{HK06}:
\begin{prop} \label[prop]{lambdainjneven}
The homomorphism $\lambda$ in the diagram \eqref{eq:testUnplus1} is injective, whose image is
\[
\{(a, b)\in \ZZ\oplus \ZZ: a \equiv b \text{ mod } 2\}.
\]
\end{prop}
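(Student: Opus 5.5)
We follow the odd case (\Cref{lambdainjnodd}), but now the low skeleton of $\Omega\V_{n+1}$ is a suspended $\CP^2$ rather than a wedge. The plan is to compute $[\Sigma^{2n}\CP^2, \Omega\V_{n+1}]$ through its $(2n+4)$-skeleton, then read off $\lambda$ from the cohomology classes $a_{2n+2}, a_{2n+4}$.

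\emph{Identify the skeleton.} $\Omega\V_{n+1}$ is $(2n+1)$-connected, with cells in dimensions $2n+2$ and $2n+4$ and none in dimension $2n+3$ or $2n+5$. We determine the attaching map of the $(2n+4)$-cell via $\Sq^2$ on $a_{2n+2}$. Mod $2$, the class $a_{2n+2}$ transgresses to $\bar x_{2n+3}$, which pulls back to $x_{2n+3}$ in $\U$. Steenrod squares commute with transgression and suspension, and in $\Hmr^*(\BU;\ZZ/2)$ Wu's formula gives $\Sq^2 c_{n+2} = c_1c_{n+2} + (n+2)c_{n+3}$. Hence $\Sq^2 x_{2n+3} = (n+2)x_{2n+5} = n\,x_{2n+5}$, and so $\Sq^2 a_{2n+2} = n\,a_{2n+4}$ mod $2$. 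For $n$ even this is nonzero, and it should be double-checked against the index convention (the odd case asserts triviality, which is consistent). Therefore the $(2n+4)$-skeleton is $\Sigma^{2n}\CP^2$, and since there is no $(2n+5)$-cell, $[\Sigma^{2n}\CP^2,\Omega\V_{n+1}]\cong[\Sigma^{2n}\CP^2,\Sigma^{2n}\CP^2]$.

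\emph{Compute the group and $\lambda$.} Apply $[\Sigma^{2n}\CP^2,-]$ to the cofibration $\Smr^{2n+2}\to\Sigma^{2n}\CP^2\to\Smr^{2n+4}$; this is in the stable range. From the proof of \Cref{lambdainjnodd} we have $[\Sigma^{2n}\CP^2,\Smr^{2n+2}]\cong 2\ZZ$ (degree on the bottom cell) and $[\Sigma^{2n}\CP^2,\Smr^{2n+4}]\cong\ZZ$. Since $[\Sigma^{2n}\CP^2,\Smr^{2n+3}]$ and the relevant $[\Sigma^{2n+1}\CP^2,\Smr^{2n+4}]$ terms vanish or are handled by $\eta^*$ as before, we obtain an extension
\[
0\to 2\ZZ\to[\Sigma^{2n}\CP^2,\Sigma^{2n}\CP^2]\to\ZZ\to 0,
\]
so the group is free of rank $2$. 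The map $\lambda$ records the degrees $(a,b)$ on $\Hmr^{2n+2}$ and $\Hmr^{2n+4}$, which shows injectivity on this free group. The identity map has image $(1,1)$, and the map through the bottom cell has image $(2,0)$. These two elements generate the group: any $f$ with degree $b$ on the top has $f-b\cdot\mathrm{id}$ factoring through the bottom sphere with even degree. Hence the image is spanned by $(1,1)$ and $(2,0)$, which is exactly $\{(a,b): a\equiv b \bmod 2\}$.

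\emph{Main obstacle.} The delicate point is the parity computation. One must show that $\Sq^2$ on $\Hmr^{2n+2}(\Omega\V_{n+1};\ZZ/2)$ is nonzero exactly when $n$ is even. This means carefully transporting Wu's formula through transgression and loop suspension, with the correct indexing of $a_{2i}$ and $x_{2i+1}$.
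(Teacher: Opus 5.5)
The weak point is the one step that separates this proposition from \Cref{lambdainjnodd}: the computation of $\Sq^2 a_{2n+2}$ is wrong as written. Your Wu formula $\Sq^2 c_k = c_1c_k + k\,c_{k+1}$ has the wrong coefficient. It already fails for $k=1$: since $c_1$ has degree $2$, $\Sq^2 c_1 = c_1^2$, whereas your formula gives $c_1^2 + c_2$.

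As a result, your own conclusion $\Sq^2 a_{2n+2} = n\,a_{2n+4}$ says that $\Sq^2$ \emph{vanishes} for even $n$. The sentence ``for $n$ even this is nonzero'' contradicts it. For odd $n$ it would contradict \Cref{lambdainjnodd}, not confirm it. Followed literally, your argument makes the skeleton $\Smr^{2n+2}\vee\Smr^{2n+4}$ for even $n$ and yields the image $\{a\equiv 0 \bmod 2\}$, which is the opposite of the claim. This parity is the entire content of the proposition, so it cannot be left to be ``double-checked''. (The paper itself only cites Hamanaka--Kono [HK06, Lemma 2.1] and remarks that the skeleton is a shifted cone on $\eta$ because $\Sq^2$ acts nontrivially. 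Your route is the intended one.)

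The fix goes through the splitting principle. Write $c=\prod_i(1+t_i)$ with $\Sq^2 t_i=t_i^2$. Since $\Sq^1$ kills these integral classes, $\Sq^2$ acts as a derivation on squarefree monomials. Hence $\Sq^2 c_k=\sum_{|S|=k}\sum_{i\in S}t_i\prod_{j\in S}t_j$. The product $c_1c_k$ equals this same sum plus $(k+1)c_{k+1}$, since each $(k+1)$-subset arises $k+1$ times. Therefore $\Sq^2 c_k\equiv c_1c_k+(k+1)c_{k+1} \pmod 2$.

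Now take $k=n+2$. Cohomology suspension kills the decomposable $c_1c_{n+2}$, so $\Sq^2 x_{2n+3}=(n+3)\,x_{2n+5}=(n+1)\,x_{2n+5}$. Hence $\Sq^2 a_{2n+2}=(n+1)\,a_{2n+4}$, which is nonzero exactly when $n$ is even, and zero for odd $n$, consistent with \Cref{lambdainjnodd}.

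With this correction the rest of your argument goes through:
\begin{itemize}
\item There are no $(2n+5)$-cells, so $[\Sigma^{2n}\CP^2,\Omega\V_{n+1}]\cong[\Sigma^{2n}\CP^2,\Sigma^{2n}\CP^2]$.
\item Stably $[\Sigma^{2n}\CP^2,\Smr^{2n+3}]=0$, which gives the short exact sequence $0\to 2\ZZ\to[\Sigma^{2n}\CP^2,\Sigma^{2n}\CP^2]\to\ZZ\to 0$.
\item Injectivity of $\lambda$ and generation of the image by $(1,1)$ and $(2,0)$ are correct.
\end{itemize}
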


Following \Cref{lambdainjneven}, we shall write an element of $[\Sigma^{2n}\CP^2, \Omega\V_{n+1}]$ as a pair of integers $(a, b)$ where $a, b$ have the same parity. In particular, $u:=(1, 1)$ and $v:=(0, 2)$ form a $\ZZ$-basis for $\Im \lambda$.
Note that \Cref{lambdainjneven} does not affect the calculations in \Cref{imomegapistarnodd} and \Cref{M1basisnodd}, but only changes the linear algebra problem we need to solve.
So we still have that the image of the homomorphism
\[
(\Omega\pi)_*: \KUt^{0}(\Sigma^{2n}\CP^2) \longrightarrow [\Sigma^{2n}\CP^2, \Omega\V_{n+1}]
\] 
is generated by 
\[
\left( (n+1)!, \, (n+2)!/2 \right) \quad \text{and} \quad (0, \, (n+2)!)
\]
which follows from the proof of \Cref{imomegapistarnodd}, and that
the subgroup $M_1 \subset [\Sigma^{2n}\CP^2, \Omega\V_{n+1}]$ is generated by 
\[
\alpha:= ((n-1)!, (n-1)!) \quad \text{and} \quad \beta:= (0, 2\cdot (n-1)!)
\]
which is the content of \Cref{M1basisnodd}. We now solve the linear algebra problem we need, to prove our main result.

\smallskip

When $n$ equals $0$ mod $4$, the number $(n+2)/2$ is odd, and therefore 
\[
u':=(1, (n+2)/2) = u + \frac{n}{4}v \quad \text{and} \quad v':=(0, 2) = v
\]
form a basis for $[\Sigma^{2n}\CP^2, \Omega\V_{n+1}]$. Since
\[
\left( (n+1)!, \, (n+2)!/2 \right) = (n+1)! u' \quad \text{and} \quad (0, \, (n+2)!) = \frac{(n+2)!}{2} v',
\]
we see that $\{(n+1)! u', \, ((n+2)!/2) v'\}$ is a basis for $\Im (\Omega\pi)_*$.
Note that 
\[
\alpha + \frac{n}{4}\beta \quad \text{and} \quad \beta
\]
still generate $M_1$, and that
\[
\alpha + \frac{n}{4}\beta = (n-1)!u' \quad \text{and} \quad \beta = (n-1)!v'.
\]
To sum up, for $n$ equals $0$ mod $4$ we have obtained:
\begin{equation} \label{eq:summary0mod4}
\begin{tabular}{c||c}
\hline
Group & Basis \\
\hline
$[\Sigma^{2n}\CP^2, \Omega\V_{n+1}]$ & $\{u', v'\}$ \\
\hline
$\Im (\Omega\pi)_*$ & $\{(n+1)!u', ((n+2)!/2)v'\}$ \\
\hline
$M_1$ & $\{(n-1)! u', \, (n-1)! v'\}$ \\
\hline
$M_k$ & $\{(n-1)! ku', \, (n-1)! kv'\}$ \\
\hline
\end{tabular}
\end{equation}

\smallskip

When $n$ equals $2$ mod $4$, the number $(n+2)/2$ is even. This time we let
\[
u'':= (1, \frac{n}{2}) = u + \frac{n-2}{4}v \quad \text{and} \quad v'':= (2, n+2) = 2u+\frac{n}{2}v.
\]
Since the change-of-basis matrix from $\{u, v\}$ to $\{u'', v''\}$ has determinant $1$, the set $\{u'', v''\}$ forms a basis for the group $[\Sigma^{2n}\CP^2, \Omega\V_{n+1}]$.
Furthermore, note that $\Im (\Omega\pi)_*$ can alternatively be generated by
\[
\left( (n+1)!, \frac{(n+2)!}{2} \right) = \frac{(n+1)!}{2} v''
\]
and
\[
(n+2)\cdot \left( (n+1)!, \frac{(n+2)!}{2} \right) - (0, (n+2)!) = (n+2)! u''
\]
because the change-of-basis matrix
\[
\begin{bmatrix}
 1 & 0 \\
 (n+2) & -1
\end{bmatrix}
\]
has determinant $-1$. Thus $\{(n+2)! u'', \frac{(n+1)!}{2} v''\}$ is a basis for $\Im (\Omega\pi)_*$.
Finally, note that instead of using $\{\alpha, \beta\}$ as a basis for $M_1$, here one can use
\[
\alpha + \frac{n-2}{4} \beta = (n-1)! u''
\]
and
\[
2\alpha + \frac{n}{2}\beta = (n-1)! v'',
\]
again because the change-of-basis matrix has determinant $1$. To sum up, for $n$ equals $2$ mod $4$, we have:
\begin{equation} \label{eq:summary2mod4}
\begin{tabular}{c||c}
\hline
Group & Basis \\
\hline
$[\Sigma^{2n}\CP^2, \Omega\V_{n+1}]$ & $\{u'', v''\}$ \\
\hline
$\Im (\Omega\pi)_*$ & $\{(n+2)!u'', ((n+1)!/2)v''\}$ \\
\hline
$M_1$ & $\{(n-1)! u'', \, (n-1)! v''\}$ \\
\hline
$M_k$ & $\{(n-1)! ku'', \, (n-1)! kv''\}$ \\
\hline
\end{tabular}
\end{equation}

We are now ready to prove our second main result, \Cref{thm:lowerboundneven}.

\begin{proof}[Proof of \Cref{thm:lowerboundneven}]
Suppose that $n$ is $0$ mod $4$. By the table \eqref{eq:summary0mod4}, the index of $M_k \cap \Im (\Omega\pi)_*$ in $M_k$ equals
\begin{align*}
& \frac{\lcm\left((n+1)!, \, (n-1)!k\right)}{(n-1)!k} \cdot \frac{\lcm\left((n+2)!/2, (n-1)!k \right)}{(n-1)!k} \\[2mm]
= & \frac{(n+1)!}{\gcd\left((n+1)!, \, (n-1)!k\right)} \cdot \frac{(n+2)!/2}{\gcd\left((n+2)!/2, (n-1)!k \right)} \\[2mm]
= & \frac{n(n+1)}{\gcd\left(n(n+1), k\right)} \cdot \frac{n(n+1)(n+2)/2}{\gcd\left(n(n+1)(n+2)/2, k \right)}.
\end{align*}
Note that $n(n+1)$ divides $n(n+1)(n+2)/2$. So if $\Gcal_k \simeq \Gcal_{\ell}$ then $\gcd\left(n(n+1)(n+2)/2, k \right)= \gcd\left(n(n+1)(n+2)/2, \ell \right)$.

\smallskip

Suppose that $n$ is $2$ mod $4$. By the table \eqref{eq:summary2mod4}, the index of $M_k \cap \Im (\Omega\pi)_*$ in $M_k$ equals
\begin{align*}
 & \frac{\lcm\left((n+2)!, \, (n-1)!k\right)}{(n-1)!k} \cdot \frac{\lcm\left((n+1)!/2, (n-1)!k \right)}{(n-1)!k} \\[2mm]
 = & \frac{(n+2)!}{\gcd\left((n+2)!, \, (n-1)!k\right)} \cdot \frac{(n+1)!/2}{\gcd\left((n+1)!/2, (n-1)!k \right)} \\[2mm]
 = & \frac{n(n+1)(n+2)}{\gcd\left(n(n+1)(n+2), k\right)} \cdot \frac{n(n+1)/2}{\gcd\left(n(n+1)/2, k \right)}.
\end{align*}
Note that $n(n+1)/2$ divides $n(n+1)(n+2)$. So if $\Gcal_k \simeq \Gcal_{\ell}$ then $\gcd\left(n(n+1)(n+2), k \right)= \gcd\left(n(n+1)(n+2), \ell \right)$. 
\end{proof}


\section{Homotopy types of \texorpdfstring{$\SU(n)$}{SUn}-gauge groups over \texorpdfstring{$\Smr^{2m+1}$}{S2mplus1}, \texorpdfstring{$m\geq n$}{mgreatern}} \label{sec:unstable}


This section is dedicated to the unstable case. In \Cref{subsec:Samelson} we compute a Samelson product localized at an odd prime $p$, generalizing the result of Hamanaka \cite[Theorem 9]{Ha07}. In \Cref{subsec:application} we apply our result to detect the homotopy types of $\SU(n)$-gauge groups over $\Smr^{2m+1}$, in the unstable range $m\geq n$.

\subsection{A key Samelson product} \label{subsec:Samelson}

The following notations and conventions are to be applied throughout this subsection:

\begin{itemize}
\item[-] All calculations are localized at an odd prime $p$, unless otherwise stated.
\item[-] The generator of $\pi_{2i+1}\SU(n) \cong \ZZ$, $1\leq i \leq n-1$, is denoted by $\epsilon_i$.
\item[-] The generator of $\pi_{2n+2j}\SU(n) \cong \ZZ/p^{\nu_p((n+j)!)}$, $0\leq j\leq p-2$, is written as $\omega_{n+j}$.
\item[-] The Stiefel manifold $\SU/\SU(n)$ is denoted by $\V_n$.
\item[-] Write $a_{2n+2j}$ for the standard cohomology class generating $\Hmr^{2n+2j}(\Omega\V_n; \ZZ)$, which transgresses to the class $x_{2n+2j+1}\in \Hmr^{2n+2j+1}(\V_n;\ZZ)$.
\item[-] Write $\alpha_1$ for the generator of the stable stem $\pi_{2p-3}^s\cong \ZZ/p$ (and recall that $p$-locally $\pi_k^s = 0$ for $0<k<2p-3$, and for $2p-3<k<4p-5$).
\item[-] Write $N$ for the sum $i+j$. 
\end{itemize}

\begin{thm} \label{thm:Sam}
Let $N\in \{p-2, p-1, \cdots, 2p-4\}$. Suppose that $n \equiv p-2-N$ mod $p$, and that $p\leq n+1$. Then the Samelson product $\langle \epsilon_i, \omega_{n+j} \rangle \in \pi_{2n+2N+1} \SU(n)$ equals 
\[
i! \, (n-1-(p-2-j)) \, \omega_{n+N-(p-2)} \circ \alpha_1.
\]
\end{thm}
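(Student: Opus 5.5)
We follow the strategy of Hamanaka \cite[Theorem 9]{Ha07} and work $p$-locally inside the fibration $\Omega \V_n \stackrel{j}{\to} \SU(n) \to \SU \to \V_n$. The commutator map of $\SU(n)$ followed by the inclusion into $\SU$ is null, so it lifts to $\tilde c:\SU(n)\sma\SU(n)\to\Omega\V_n$. Hamanaka--Kono \cite[Proposition 5.2]{HK03} gives
\[
\tilde c^*(a_{2n+2N})=\sum x_{2s+1}\otimes x_{2t+1},
\]
with the sum over $s+t=n+N-1$. Thus $\langle\epsilon_i,\omega_{n+j}\rangle = j\circ\tilde c\circ(\epsilon_i\sma\omega_{n+j})$, and it suffices to identify the element $\tilde c\circ(\epsilon_i\sma\omega_{n+j})\in\pi_{2n+2N+1}(\Omega\V_n)$ modulo the image of $\pi_{2n+2N+1}(\Omega\SU)$, which is $0$ since that group is $\pi_{2n+2N+2}(\SU)=0$.

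\textbf{Step 1: cells of $\Omega\V_n$.} Since $N\le 2p-4$, $p$-locally the space $\Omega\V_n$ has cells only in even degrees $2n,\dots,2n+2N+2$. The only nontrivial attaching maps in this range are multiples of $\alpha_1$, joining cells $2p-2$ apart. The coefficient of $\alpha_1$ attaching the $(2n+2N+2)$-cell to the $(2n+2N-2p+4)$-cell is detected by the Steenrod power $\mathcal P^1$ on $\Hmr^*(\Omega\V_n;\ZZ/p)$. It is computed from $\mathcal P^1 x_{2s+1}=s\,x_{2s+2p-1}$ in $\SU$, transported through the transgression, and gives a factor $n+N-(p-1)$ up to a unit.

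\textbf{Step 2: locate $\tilde c\circ(\epsilon_i\sma\omega_{n+j})$.} The class $\omega_{n+j}$ factors $p$-locally as a map $\Smr^{2n+2j}\to\Omega\V_n\to\SU(n)$, with Hurewicz image $(n+j)!\,$ times a generator modulo $p$-adic units. The class $\epsilon_i$ has $\epsilon_i^*x_{2i+1}=i!\,\iota$. The degree $2n+2N+1$ of the source lies above $2n+2N$, so the composite is not detected by cohomology directly. Instead we compress it and compare with $\omega_{n+N-(p-2)}\circ\alpha_1$: the target element lives in the stable range on the $(2n+2N-2p+4)$-cell, and its coefficient is read off via a functional $\mathcal P^1$, or equivalently an $e$-invariant computation using $\ch$ as in \Cref{omegapistar}. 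The hypothesis $n\equiv p-2-N \pmod p$ makes $n+N-(p-2)\equiv 0\pmod p$, which is exactly when $\pi_{2n+2N-2p+4}\SU(n)$ has a $\ZZ/p$ summand that can absorb $\alpha_1$. The hypothesis $p\le n+1$ guarantees that $\omega_{n+N-(p-2)}$ is defined, and that divisibility of $(n+N-p+2)!$ by $p$ makes the composite nonzero.

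\textbf{Step 3: bookkeeping.} The factor $i!$ comes from $\epsilon_i^*x_{2i+1}$. The factor $n-1-(p-2-j)$ arises when pulling back the term $x_{2i+1}\otimes x_{2(n+j)-1}$ through the $\mathcal P^1$ relation between $x_{2(n+j)-2p+3}$ and $x_{2(n+j)+1}$, with coefficient $n+j-p+1$. The remaining factorial ratios $(n+j)!$ versus $(n+N-p+2)!$ cancel against the orders of the $\omega$'s, up to units.

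\textbf{Main obstacle.} The hardest step is Step 2: making the functional-operation computation rigorous. It requires showing that $\tilde c\circ(\epsilon_i\sma\omega_{n+j})$ factors through the relevant two-cell subquotient, and computing its $\mathcal P^1$-coefficient in a way that is compatible with the unit ambiguity of the generators. We would first try a cohomology computation on the mapping cone of the composite, using the explicit $\tilde c^*$ formula above.
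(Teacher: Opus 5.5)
Your reduction is sound: the commutator lifts to $\tilde c\colon\SU(n)\sma\SU(n)\to\Omega\V_n$, the choice of lift does not matter because $\pi_{2n+2N+1}(\Omega\SU)=0$, and the factor $i!$ does come from $\epsilon_i^*x_{2i+1}$. But there is a genuine gap exactly at the step you call the main obstacle. Nothing in the proposal computes the homotopy class $\tilde c\circ(\epsilon_i\sma\omega_{n+j})\in\pi_{2n+2N+1}(\Omega\V_n)$, and this class is invisible to cohomology. A functional $\mathcal{P}^1$ on the mapping cone of a map out of a sphere cannot be evaluated from $\tilde c^*$ and the Hurewicz image of $\omega_{n+j}$ alone. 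The factorization of $\omega_{n+j}$ through $\Omega\V_n$ that you propose also does not interact with $\tilde c$ at all.

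The missing idea is Hamanaka's compression (\Cref{lift}): $\omega_{n+j}$ lifts to $\nu\colon\Smr^{2n+2j}\to\Sigma\CP^{n-1-(p-2-j)}\subset\Sigma\CP^{n-1}\to\SU(n)$. This splits the Samelson product into two computable pieces.
First, $\tilde c\circ(\epsilon_i\sma l)$ is a map out of the finite complex $\Smr^{2i+1}\sma\Sigma\CP^{n-1-(p-2-j)}$. Because $\Omega\V_n$ is $(2n-1)$-connected, it factors through the stunted quotient, which is a wedge of spheres. On the top sphere it equals $i!\,\tilde\omega_{n+N-(p-2)}$, read off from the term $x_{2i+1}\otimes x_{2(n+j)-2p+3}$ of $\tilde c^*(a_{2n+2N-(2p-4)})$.
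Second, $\nu$ followed by the pinch onto the top cell is $(n+j-p+1)\alpha_1$. This is detected by $\mathcal{P}^1(\sigma t^{n+j-p+1})=(n+j-p+1)\,\sigma t^{n+j}$ in $\Sigma\CP^{n+j}$. The components on the lower spheres vanish, since $p$-locally the stems strictly between $2p-3$ and $4p-5$ are zero.

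Your Step 3 has the right numbers but not this mechanism. The relevant class is $a_{2n+2N-(2p-4)}$, not $a_{2n+2N}$, so the term is not $x_{2i+1}\otimes x_{2(n+j)-1}$. There is also no cancellation of factorial ratios, since $\tilde\omega_{n+N-(p-2)}$ is normalized to pull $a_{2n+2N-(2p-4)}$ back to a generator.

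Step 1 contains an error that points the wrong way. Since $a_{2m}$ transgresses to $x_{2m+1}$ and $\mathcal{P}^1x_{2m+1}=m\,x_{2m+2p-1}$, one has $\mathcal{P}^1a_{2n+2N-(2p-4)}=(n+N-(p-2))\,a_{2n+2N+2}$, not a factor $n+N-(p-1)$. Under the hypothesis $n\equiv p-2-N$ the correct coefficient is $0$ mod $p$. That vanishing is precisely what lets $\omega_{n+N-(p-2)}\circ\alpha_1$ survive as a generator of $\pi_{2n+2N+1}\SU(n)\cong\ZZ/p$ (compare the splitting argument in \Cref{factor1}). Your coefficient is $\equiv-1$, a unit. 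That would mean the $(2n+2N+2)$-cell is attached by an essential $\alpha_1$ to the $(2n+2N-2p+4)$-cell, which is the situation in which this composite is killed.

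Likewise, the group that matters is $\pi_{2n+2N+1}\SU(n)$, not $\pi_{2n+2N-2p+4}\SU(n)$. Showing it is $\ZZ/p$ generated by $\omega_{n+N-(p-2)}\circ\alpha_1$ needs its own argument; the paper uses the metastable identification with $\pi^s_{2n+2N+1}(\CP_n^{n+N+1})$. Your proposal does not supply one.
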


We first establish the lemmas needed to prove the above main result.

\begin{lemma} \label[lemma]{factor1}
When $n \equiv p-2-N$ mod $p$ and $p\leq n+1$, the homotopy group $\pi_{2n+2N+1}\SU(n)$ is isomorphic to $\ZZ/p$, with $\omega_{n+N-(p-2)} \circ \alpha_1$ as its generator.
\end{lemma}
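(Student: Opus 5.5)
The plan is to compute $\pi_{2n+2N+1}\SU(n)$ from the fibration $\SU(n)\to\SU\to\V_n$, localized at $p$. Write $M=n+N$ and $j'=N-(p-2)\in\{0,\dots,p-2\}$, and set $k=n+j'$, so that $2M+2=2k+2p-2$. The long exact sequence gives
\[
\pi_{2M+2}\SU \longrightarrow \pi_{2M+2}\V_n \stackrel{\partial}{\longrightarrow} \pi_{2M+1}\SU(n) \longrightarrow \pi_{2M+1}\SU .
\]
By Bott periodicity $\pi_{2M+2}\SU=0$. The last map is also zero: $2M+1>2n-1$, so $\pi_{2M+1}\SU(n)$ is finite, and $\pi_{2M+1}\SU\cong\ZZ$ is torsion-free. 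Hence $\partial$ is an isomorphism, and it remains to show that $\pi_{2M+2}\V_n\cong\ZZ/p$, generated by $\iota_{2k+1}\circ\alpha_1$. Here $\iota_{2k+1}\in\pi_{2k+1}\V_n$ is the class carried by the $(2k+1)$-cell, and the definition of $\omega_{n+j'}$ (via $\pi_{2n+2j'+1}\V_n\to\pi_{2n+2j'}\SU(n)$) gives $\partial(\iota_{2k+1})=\omega_{n+j'}$. Naturality then gives $\partial(\iota_{2k+1}\circ\alpha_1)=\omega_{n+N-(p-2)}\circ\alpha_1$, which is the generator named in the statement.

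To compute $\pi_{2M+2}\V_n$, I would use the $p$-local cell structure of $\V_n$ through dimension $2M+3$. Its cells are $e^{2l+1}$ for $n\le l\le M+1$, and there are no even cells. For $n\le l<k$, the candidate contributions to $\pi_{2M+2}$ come from $\pi^s_{2M+1-2l}$. The stem $2M+1-2l$ is odd and lies strictly between $2p-3$ and $4p-5$ (using $N\le 2p-4$), so $p$-locally these groups vanish. The cell $e^{2k+1}$ contributes $\pi_{2M+2}\Smr^{2k+1}=\pi^s_{2p-3}\cong\ZZ/p\cdot\alpha_1$. The cells $e^{2l+1}$ with $k<l\le M$ lie in stems $0<\ast<2p-3$, which vanish, and $\pi_{2M+2}\Smr^{2M+1}=0$. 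The only remaining issue is whether the cell $e^{2M+3}$, attached possibly by $\alpha_1$ onto $e^{2k+1}$, kills this $\ZZ/p$. I would first pass to a skeleton quotient so that the relevant piece is a two-cell complex $\Smr^{2k+1}\cup_{c\alpha_1}e^{2k+2p-1}$; the vanishing groups above should allow the other cells to be ignored.

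The main obstacle is showing that $c\equiv 0$ mod $p$. I would detect $c$ by $\mathcal{P}^1$. In $\Hmr^*(\SU;\ZZ/p)$ one has $\mathcal{P}^1x_{2k+1}=k\,x_{2k+2p-1}$ (from the Chern-class/Newton-polynomial formula), and this is inherited by $\V_n$ through the injection $\Hmr^*(\V_n)\to\Hmr^*(\SU)$. The hypothesis $n\equiv p-2-N$ mod $p$ says exactly that $k=n+N-(p-2)\equiv 0$ mod $p$, so $\mathcal{P}^1$ is trivial and the attaching map is inessential. Some care is needed to ensure the coefficient of $\alpha_1$ is truly detected by $\mathcal{P}^1$ on this subquotient, and that no other cell interferes. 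The condition $p\le n+1$ guarantees that $\nu_p((n+j')!)\ge 1$, so that $\omega_{n+j'}$ is a genuinely nontrivial element and the cells involved are in the claimed range. Finally, exactness gives that the image $\omega_{n+N-(p-2)}\circ\alpha_1$ is nonzero and generates $\ZZ/p$.
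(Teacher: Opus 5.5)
Your argument is correct in its essentials, and its core is the paper's. In both, everything reduces to the $p$-local stable cell structure of a stunted projective space, where only $\alpha_1$ can occur as an attaching map. The cell corresponding to $x^k$, with $k=n+N-(p-2)$, cannot be attached to a lower cell for dimensional reasons, and $\mathcal{P}^1(\sigma x^k)=k\,\sigma x^{k+p-1}=0$ because $k\equiv 0$ mod $p$. So this sphere splits off and gives the only contribution, $\pi^s_{2p-3}\cong\ZZ/p$.

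You differ from the paper in the reduction and in how the generator is named. The paper identifies $\pi_{2M+1}\SU(n)$ with $\Vect^0_n(\Smr^{2M+2})$, invokes \cite[Theorem~2.1]{Hu23} to pass to $\pi^s_{2M+1}(\CP^\infty_n)$, and then works with $\CP^{M+1}_n$. You obtain $\pi_{2M+1}\SU(n)\cong\pi_{2M+2}\V_n$ directly from the fibration $\SU(n)\to\SU\to\V_n$, using only Bott periodicity and the finiteness of $\pi_{2M+1}\SU(n)$. You then read off the cells of $\V_n$, which in this range are those of $\Sigma\CP^{M+1}_n$. This is more self-contained.

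Your route also gives something the paper leaves implicit. The paper only remarks that the generator `factors through $\alpha_1$'. Your identity $\partial(\iota_{2k+1}\circ\alpha_1)=\partial(\iota_{2k+1})\circ\alpha_1$, with $\partial(\iota_{2k+1})$ a generator of $\pi_{2k}\SU(n)$, shows the generator is exactly $\omega_{n+N-(p-2)}\circ\alpha_1$. You should justify that $\iota_{2k+1}$ exists $p$-locally: the skeleton $\Sigma\CP^k_n$ has cells spanning at most $2p-4$ dimensions, so it is a $p$-local wedge of spheres.

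One step should not be left implicit. You claim that $\V_n$ has only the cells $e^{2l+1}$ through dimension $2M+3$, and you replace $\pi_{2M+2}\V_n$ by stable stems. Both presuppose $2M+2\le 4n$:
\begin{itemize}
\item the first product cell $e^{2n+1}\times e^{2n+3}$ has dimension $4n+4$;
\item Freudenthal for the $2n$-connected $\V_n$ stops at $\pi_{4n}$.
\end{itemize}
The hypotheses do not force $2M+2\le 4n$. For example, $p=7$, $n=9$, $N=10$ satisfies $n\equiv p-2-N$ mod $p$ and $p\le n+1$, and it puts $e^{19}\times e^{21}$ in dimension $40=2M+2$. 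At an odd prime these effects should not change $\pi_{2M+2}$ in this range. For instance, an even product cell of dimension $2M+2$ is attached by a rationally essential Whitehead product, so $p$-locally it neither adds nor kills anything. Still, you must either argue this or restrict the range. The paper hands exactly this passage to stable homotopy off to its citation of \cite{Hu23}.
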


\begin{proof}
Note that $\pi_{2n+2N+1}\SU(n)$ is the set $\Vect_n^0(\Smr^{2n+2N+2})$ of rank $n$ stably trivial complex vector bundles over $\Smr^{2n+2N+2}$. Indeed, since 
\[
\pi_{2n+2N+1}\SU(n) \cong \pi_{2n+2N+2}\BSU(n) = [\Smr^{2n+2N+2}, \BSU(n)] \cong [\Smr^{2n+2N+2}, \BU(n)],
\]
this is the set $\Vect_n(\Smr^{2n+2N+2})$ of rank $n$ complex vector bundles over $\Smr^{2n+2N+2}$. Any such bundle is necessarily stably trivial, since the bundle must have vanishing Chern classes for degree reason, and since the Chern character is injective for even spheres.
By \cite[Theorem 2.1]{Hu23}, when $2N+2\leq 4n$ we have
\[
\Vect_n^0(\Smr^{2n+2N+2}) \cong \pi_{2n+2N+2}^s(\Sigma\CP_n^{\infty}) \cong \pi_{2n+2N+1}^s(\CP_n^{\infty}).
\]
Note that $2N+2\leq 4n$ is guaranteed by the condition $p\leq n+1$.
Also observe that
\[
\pi_{2n+2N+1}^s(\CP_n^{\infty}) \cong \pi_{2n+2N+1}^s(\CP_n^{n+N+1}).
\]
This follows from the cofiber sequence of spectra:
\[
\Sigma^{-1}\CP_{n+N+2}^{\infty} \longrightarrow \CP_n^{n+N+1} \longrightarrow \CP_n^{\infty} \longrightarrow \CP_{n+N+2}^{\infty}
\]
where $\Sigma^{-1}\CP_{n+N+2}^{\infty}$ is $(2n+2N+2)$-connected.

Furthermore, note that $\CP_n^{n+N+1}$ is a wedge of spheres and suspensions of $\Cmr(\alpha_1)$.
Indeed, the top cell in the stunted projective spectrum is attached via a map 
\[
\Smr^{2n+2N+1} \longrightarrow \Smr^{2n}.
\]
The dimension difference between the source and target spheres equals $2N+1$, which is no greater than $4p-7$ since the maximal value of $N$ is $2p-4$. So no attaching map in the structure of $\CP_n^{n+N+1}$ can involve $\alpha_2\in \pi_{4p-5}^s$ or elements in higher stems.
In particular, we claim that $\Smr^{2n+2N-(2p-4)}$ is a summand of $\CP_n^{n+N+1}$. Indeed, for dimension reasons this cell cannot be attached to any lower one via an $\alpha_1$-attaching map. Also, by our assumption $n+N-(p-2)$ is zero modulo $p$, so the corresponding $\Hmr\FF_p$-cohomology class does not support a nonzero \mbox{$\Sp^1$-action}. Therefore no higher-dimensional cell can be attached to it via an $\alpha_1$-attaching map.

Finally, note that $\Smr^{2n+2N-(2p-4)}$ is the only summand of $\CP_n^{n+N+1}$ to which $\Smr^{2n+2N+1}$ can map non-trivially, just for dimension reasons.
To sum up, we have proved that
\[
\pi_{2n+2N+1}\SU(n) \cong \pi_{2n+2N+1}^s(\CP_n^{n+N+1}) \cong \pi_{2n+2N+1}(\Smr^{2n+2N-(2p-4)}) \cong \ZZ/p \cdot \alpha_1.
\]
So the generator of $\pi_{2n+2N+1}\SU(n)$ factors through $\alpha_1$, and the desired result follows.
\end{proof}

\smallskip

We need the following lemma, originally proved by Hamanaka \cite{Ha07}.

\begin{lemma}[{\cite[Lemma 8]{Ha07}}] \label[lemma]{lift}
For $0\leq j\leq 2p-2$, $\omega_{2n+2j}: \Smr^{2n+2j} \rightarrow \SU(n)$ lifts against the standard inclusion
\[
\Sigma \CP^{n-1-(p-2-j)} \longrightarrow \Sigma \CP^{n-1} \longrightarrow \SU(n).
\]
\end{lemma}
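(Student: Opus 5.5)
We follow Hamanaka's argument. First we factor $\omega_{n+j}$ through $\Sigma\CP^{n-1}$ as a boundary map of a relative homotopy group. Then we push it down to the smaller skeleton $\Sigma\CP^{n-1-(p-2-j)}$ by showing the obstruction lives in stems where $p$-local stable homotopy vanishes. (The statement writes $\omega_{2n+2j}$, but in the paper's notation the generator of $\pi_{2n+2j}\SU(n)$ is $\omega_{n+j}$, and that is what we lift.)

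\emph{Step 1: factor through $\Sigma\CP^{n-1}$.} Choose $M\gg n+j$ and use the fibration $\SU(n)\to\SU(M)\to \SU(M)/\SU(n)$. Since $\pi_{2n+2j+1}\SU(M)=\ZZ$ and $\pi_{2n+2j}\SU(M)=0$, the group $\pi_{2n+2j}\SU(n)$ is the image of the boundary map
\[
\partial:\pi_{2n+2j+1}(\SU(M),\SU(n))\to\pi_{2n+2j}\SU(n).
\]
So $\omega_{n+j}=\partial g$ for some relative class $g$. The standard inclusions give a map of pairs $(\Sigma\CP^{M-1},\Sigma\CP^{n-1})\to(\SU(M),\SU(n))$. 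It induces $\Sigma\CP^{M-1}_{n}\to \SU(M)/\SU(n)$, which is an isomorphism on homology through degree about $4n$. Using relative Blakers--Massey (excision), the induced map on $\pi_{2n+2j+1}$ of the pairs is an isomorphism (at least $p$-locally), since $2n+2j+1$ lies below roughly $4n$. Here we use the standing hypothesis $p\le n+1$ together with $j\le 2p-2$; this dimension check is the bookkeeping part. Hence $g$ lifts to $\tilde g\in\pi_{2n+2j+1}(\Sigma\CP^{M-1},\Sigma\CP^{n-1})$, and $\partial\tilde g:\Smr^{2n+2j}\to\Sigma\CP^{n-1}$ is a lift of $\omega_{n+j}$.

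\emph{Step 2: compress to the skeleton.} Set $s=n-1-(p-2-j)$. If $s\ge n-1$ there is nothing to do, so assume $j<p-2$. Consider the cofibration
\[
\Sigma\CP^{s}\to\Sigma\CP^{n-1}\to\Sigma\CP^{n-1}_{s+1}.
\]
The quotient has cells in dimensions $2s+3,\dots,2n-1$, so it is $(2s+2)$-connected. The composite $\Smr^{2n+2j}\to\Sigma\CP^{n-1}_{s+1}$ lies in stems between $2n+2j-(2n-1)=2j+1$ and $2n+2j-(2s+3)=2p-5$. All of these are at most $2p-5<2p-3$, so $p$-locally each cell contributes zero, stably and also unstably, since odd spheres have no $p$-torsion below stem $2p-3$. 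A cell-by-cell induction up the skeleta of the quotient then shows $\pi_{2n+2j}(\Sigma\CP^{n-1}_{s+1})_{(p)}=0$.

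Finally, use the fact that in the metastable range the cofibre sequence behaves like a fibre sequence. By Blakers--Massey, the fibre $F$ of the quotient map receives a map from $\Sigma\CP^{s}$ that is highly connected, with connectivity roughly $2s+2+2$ plus the connectivity of $\Sigma\CP^s$. Since $s\ge n-p+1\ge 0$ and $n\ge p-1$, this exceeds $2n+2j$ once we are in the range relevant for the paper. Therefore the lift factors through $\Sigma\CP^{s}$.

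\emph{Main obstacle.} The delicate step is the connectivity bookkeeping: making sure both Blakers--Massey comparisons, the relative one in Step 1 and the cofibre-versus-fibre one in Step 2, are valid $p$-locally across the full stated range $0\le j\le 2p-2$. If the direct estimates fall short, we would instead pass to the stable setting, $\pi^s_*(\Sigma\CP^{n-1}_{\,\cdot\,})$, through the identification of \cite[Theorem 2.1]{Hu23} already used in \Cref{factor1}. There the compression becomes a purely stable statement: the map $\Smr^{2n+2j}\to\Sigma\CP^{n-1}_{s+1}$ vanishes because $\pi^s_k=0$ $p$-locally for $0<k<2p-3$.
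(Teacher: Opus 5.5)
Your overall plan is sound: lift to $\Sigma\CP^{n-1}$, then push the map past the cells of $\Sigma\CP^{n-1}_{s+1}$ because the stems $2j+1,\dots,2p-5$ lie below $2p-3$. Your vanishing of the composite into the quotient is also correct. The gap is that both places where you convert cofibre information into fibre information use Blakers--Massey far outside its range.

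\textbf{Step~1.} The pair $(\Sigma\CP^{M-1},\Sigma\CP^{n-1})$ is $2n$-connected, but $\Sigma\CP^{n-1}$ is only $2$-connected. Homotopy excision therefore controls $\pi_k(\Sigma\CP^{M-1},\Sigma\CP^{n-1})\to\pi_k(\Sigma\CP^{M-1}_n)$ only for $k\le 2n+2$, with surjectivity at $k=2n+3$. The bound ``about $4n$'' is the connectivity of $\Sigma\CP^{M-1}_n\to\SU(M)/\SU(n)$, which is a different comparison. So Step~1 is only justified for $j\le 1$.

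\textbf{Step~2.} The same count shows $\Sigma\CP^{s}\to F$ is only $(2s+4)$-connected. Since $2s+4=2n+2j-2p+6<2n+2j$ for every $p\ge 5$, your claim that the connectivity ``exceeds $2n+2j$'' is false. Take $p=5$, $n=5$, $j=0$, a case the paper actually uses. You must compress a map $\Smr^{10}\to\Sigma\CP^{4}$ into $\Smr^{3}$. The fibre $F$ already differs from $\Smr^3$ by a $7$-cell, coming from $\Sigma\CP^s\wedge\Omega(\Sigma\CP^{n-1}_{s+1})$. The group $\pi_{10}(\Smr^7)\cong\ZZ/24$ vanishes $5$-locally but not integrally. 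So the compression genuinely depends on $p$-local control of these unstable cross-terms, which your argument never supplies. The fallback via \cite{Hu23} does not close the gap. It only re-proves the stable vanishing in the quotient, whereas factoring through the $2$-connected space $\Sigma\CP^{s}$ is an unstable statement far outside its stable range.

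\textbf{A repair.} Drop Step~1 and use two facts.
\begin{enumerate}
\item A generator of $\pi_{2n+2j}\SU(n+j)$ is the attaching map $\phi\colon\Smr^{2n+2j}\to\Sigma\CP^{n+j-1}$ of the top cell of $\Sigma\CP^{n+j}$, composed with $\Sigma\CP^{n+j-1}\to\SU(n+j)$. This follows from the characteristic map and $\SU(n+j+1)\to\Smr^{2n+2j+1}$, with no excision needed.
\item For $j\le p-2$, the map $\pi_{2n+2j}\SU(n)\to\pi_{2n+2j}\SU(n+j)$ is a $p$-local isomorphism. In this range $\SU(n+j)/\SU(n)$ has cells $e^{2n+1},\dots,e^{2n+2j-1}$, and the relevant stems are at most $2j<2p-3$.
\end{enumerate}
It then remains to compress $\phi$ into $\Sigma\CP^{n+j-p+1}=\Sigma\CP^{s}\subset\Sigma\CP^{n-1}$. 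This needs a genuinely $p$-local unstable input. One option is the Mimura--Nishida--Toda splitting of $\Sigma\CP^{m}_{(p)}$ into $p-1$ wedge summands, each with cell dimensions in a single residue class mod $2(p-1)$. The top cell $e^{2n+2j+1}$ is then attached within its own summand, whose next cell down is $e^{2s+1}$, and cellular approximation finishes. Alternatively, check directly that every correction term in the fibre is built from odd-dimensional cells in stems $<2p-3$.

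This version also gives what the paper actually uses in \Cref{lambda}: that $\nu_{n+j}$ is the compressed attaching map of the top cell of $\Sigma\CP^{n+j}$. An arbitrary lift, as produced by your Step~1, would not give this.
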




Denote by $\nu_{2n+2j}: \Smr^{2n+2j} \rightarrow \Sigma \CP^{n-1-(p-2-j)}$ the lift of $\omega_{2n+2j}$. This makes the bottom left triangle in the following diagram commute.
\begin{equation} \label{eq:diagram}
\begin{tikzcd}
& \phantom{} & \\
 & \Smr^{2i+1} \sma \Smr^{2n-1+2j-(2p-4)}  \ar[rdd, "f", bend left = 35]  & \\
 & \Smr^{2i+1} \sma \Sigma\CP_{n-1-(N-j)}^{n-1-(p-2-j)} \ar[u, "\id \sma r"] \ar[rd, "g", bend left = 10] & \\  
 & \Smr^{2i+1} \sma \Sigma\CP^{n-1-(p-2-j)} \ar[d, "\epsilon_i \sma l"'] \ar[r, "h"] \ar[u, "\id \sma q"] & \Omega\V_n \ar[d]  \arrow[uuul, phantom, "\not\circlearrowright"'{color=red}]  \\  
   \Smr^{2n+2N+1} = \Smr^{2i+1}  \sma \Smr^{2n+2j} \ar[r, "\epsilon_i \sma \omega_{2n+2j}"'] \ar[ru, "\id\sma \nu_{2n+2j}"', bend left = 10] \ar[ruu, "\mu", bend left = 25] \ar[ruuu, "\lambda", bend left = 40] & \SU(n)\sma \SU(n) \ar[ru, "\tilde{c}"'] \ar[r, "c"'] & \SU(n) \\  
\end{tikzcd}
\end{equation}
Some other notations involved in the above diagram are explained below:
\begin{itemize}
 \item[-] $\mu$ is the composite $(\id\sma q) \circ (\id\sma \nu_{2n+2j})$, and $\lambda$ is the composite $(\id\circ r) \circ \mu$.
 \item[-] $l$ is the inclusion, and $h$ is the composite $\tilde{c} \circ (\epsilon_i \sma l)$. 
 \item[-] $q$ is the quotient of $\Sigma\CP^{n-1-(p-2-j)}$ by the skeleton $\Sigma\CP^{n-2-(N-j)}$, and $r$ is the quotient of $\Sigma\CP_{n-1-(N-j)}^{n-1-(p-2-j)}$  onto its top cell.
\end{itemize}

\begin{lemma} \label[lemma]{factor2}
There exists a map 
\[
g: \Smr^{2i+1} \sma \Sigma\CP_{n-1-(N-j)}^{n-1-(p-2-j)} \longrightarrow \Omega\V_n
\] 
so that $h = g \circ (\id\sma q)$.
\end{lemma}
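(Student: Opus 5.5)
The plan is to obtain $g$ from the cofiber sequence defining $q$, using a connectivity argument for $\Omega\V_n$.

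\textbf{Step 1: the cofiber sequence.} Write $s := n-2-(N-j)$. By definition $q$ is the quotient of $\Sigma\CP^{n-1-(p-2-j)}$ by its subcomplex $\Sigma\CP^{s}$, so there is a cofiber sequence
\[
\Sigma\CP^{s} \xrightarrow{\ l'\ } \Sigma\CP^{n-1-(p-2-j)} \xrightarrow{\ q\ } \Sigma\CP_{n-1-(N-j)}^{n-1-(p-2-j)},
\]
where $l'$ is the skeletal inclusion. Smashing with $\Smr^{2i+1}$ preserves cofiber sequences, so
\[
\Smr^{2i+1}\sma\Sigma\CP^{s} \xrightarrow{\id\sma l'} \Smr^{2i+1}\sma\Sigma\CP^{n-1-(p-2-j)} \xrightarrow{\id\sma q} \Smr^{2i+1}\sma\Sigma\CP_{n-1-(N-j)}^{n-1-(p-2-j)}
\]
is again a cofiber sequence. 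By exactness of $[-,\Omega\V_n]$ along this sequence (Puppe), $h$ factors as $g\circ(\id\sma q)$ as soon as the composite $h\circ(\id\sma l')$ is null-homotopic.

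\textbf{Step 2: a dimension count.} Since $N-j=i$, we have $s = n-2-i$. Hence $\Smr^{2i+1}\sma\Sigma\CP^{s}$ is a CW complex of dimension
\[
(2i+1)+1+2(n-2-i) = 2n-2.
\]
If $s<0$, this complex is a point and there is nothing to prove.

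\textbf{Step 3: connectivity of $\Omega\V_n$.} The Stiefel manifold $\V_n=\SU/\SU(n)$ has its bottom cell in dimension $2n+1$, so it is $2n$-connected. Consequently $\Omega\V_n$ is $(2n-1)$-connected. Note that its first cohomology class is $a_{2n}$, in degree $2n$.

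\textbf{Step 4: conclusion.} By cellular approximation, every map from a CW complex of dimension $2n-2<2n$ into a $(2n-1)$-connected space is null-homotopic. In particular $h\circ(\id\sma l')\simeq *$, and Step 1 then produces the required $g$ with $h=g\circ(\id\sma q)$.

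I do not expect a real obstacle here. The only points needing care are two pieces of bookkeeping: the index identity $N-j=i$, which makes the dimension come out exactly $2n-2$, and the fact that the lower limit $n-1-(N-j)$ of the stunted space is precisely one more than the top cell of the collapsed skeleton. The latter is what ensures that the cofiber of $l'$ is the stunted projective space in the statement.
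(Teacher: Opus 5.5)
Your proposal is correct and follows the paper's proof: both smash the cofiber sequence $\Sigma\CP^{n-2-(N-j)}\to\Sigma\CP^{n-1-(p-2-j)}\to\Sigma\CP^{n-1-(p-2-j)}_{n-1-(N-j)}$ with $\Smr^{2i+1}$, use $N-j=i$ to see that the first term has dimension $2n-2$, and conclude from the $(2n-1)$-connectivity of $\Omega\V_n$ that $h$ restricts trivially, so it factors through $\id\sma q$. Your extra bookkeeping (the degenerate case $s<0$ and checking that the cofiber is exactly the stated stunted space) is harmless and slightly more careful than the paper's.
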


\begin{proof}
The map $\id\sma q$ is part of the cofiber sequence:
\[
\Smr^{2i+1}\sma \Sigma\CP^{n-2-(N-j)} \longrightarrow \Smr^{2i+1} \sma \Sigma\CP^{n-1-(p-2-j)} \stackrel{\id\sma q}{\longrightarrow} \Smr^{2i+1} \sma \Sigma\CP_{n-1-(N-j)}^{n-1-(p-2-j)}.
\]
Note that:
\begin{itemize}
\item[-] $\Omega\V_n$ is $(2n-1)$-connected, and that
\item[-] $\Smr^{2i+1}\sma \Sigma\CP^{n-2-(N-j)}$ has dimension $2i+1+2n-4-2(N-j)+1 = 2n-2$.
\end{itemize}
Since $2n-2 < 2n-1$, the first map in the cofiber sequence becomes null after composing with $h$. So $h$ factors through $\id \sma q$.
\end{proof}

Furthermore, we note that $\Sigma\CP_{n-1-(N-j)}^{n-1-(p-2-j)}$ is a wedge of spheres. Indeed, its top cell has dimension $2(n-1-(p-2-j))+1$ and its bottom cell has dimension $2(n-1-(N-j))+1$. The dimension difference is $2(N-(p-2))$, which is at most $2((2p-4)-(p-2)) = 2p-4$, and this is strictly smaller than $|\alpha_1| = 2p-3$. So the quotient map $r$ in diagram \eqref{eq:diagram} admits a splitting
\[
s: \Smr^{2n+2N-(2p-4)} = \Smr^{2i+1} \sma \Smr^{2n-1+2j-(2p-4)} \longrightarrow \Smr^{2i+1} \sma \Sigma\CP_{n-1-(N-j)}^{n-1-(p-2-j)},
\]
namely the inclusion of the wedge factor.
The map $f$ in diagram \eqref{eq:diagram} is defined as the composite $g\circ s$.
Note that $f$ does not make the top-right triangle in diagram \eqref{eq:diagram} commute, but rather satisfies $f\circ \lambda = g\circ \mu$. 

\smallskip

To prove \Cref{thm:Sam}, we need to further identify the map  $\lambda \in \pi_{2n+2N+1}(\Smr^{2n+2N-(2p-4)}) =\pi_{2p-3}^s$, and the map $f: \pi_{2n+2N-(2p-4)}(\Omega\V_n)$ in diagram \eqref{eq:diagram}. The next two propositions complete this task.

\begin{prop} \label[prop]{lambda}
The map $\lambda$ equals $(n-1-(p-2-j))\alpha_1$.
\end{prop}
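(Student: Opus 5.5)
The plan is to reduce the claim to a single unstable-to-stable computation on a stunted projective space and then read off the coefficient of $\alpha_1$ using the Steenrod power $\Sp^1$. Set $m := n-1-(p-2-j)$. Smashing with $\Smr^{2i+1}$ is just a $(2i+1)$-fold suspension, and $\pi_{2p-3}$ of a sphere is stable in this range. So $\lambda$ is the suspension of the composite
\[
\Smr^{2n+2j} \stackrel{\nu_{2n+2j}}{\longrightarrow} \Sigma\CP^{m} \stackrel{q}{\longrightarrow} \Sigma\CP^{m}_{n-1-(N-j)} \stackrel{r}{\longrightarrow} \Smr^{2m+1}.
\]
Since $2n+2j-(2m+1) = 2p-3$, it suffices to show that this composite is $m\,\alpha_1$ in $\pi^s_{2p-3}\cong\ZZ/p$.

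First I will pin down $\nu_{2n+2j}$ concretely. Following the proof of \Cref{lift} in \cite{Ha07}, the generator $\omega_{n+j}$ of $\pi_{2n+2j}\SU(n)$ is the image, under the connecting map of $\SU(n)\to\SU(n+j+1)\to \SU(n+j+1)/\SU(n)$, of the bottom cell of the Stiefel manifold. Equivalently, it is the attaching map $\phi$ of the top cell $e^{2n+2j+1}$ of $\Sigma\CP^{n+j}\subset \SU(n+j+1)$, regarded modulo the cells of $\Sigma\CP^{n+j-1}$ that are not in $\SU(n)$. So I take $\nu_{2n+2j}$ to be a compression of $\phi$ (up to a $p$-local unit) into $\Sigma\CP^{m}$. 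The point of \Cref{lift} is precisely that $\phi$ can be compressed past the cells $\Sigma\CP^{n+j-1}_{m+1}$: these are at distance less than $2p-3$ from the top cell, so $p$-locally they split off and carry no attaching data.

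Next I identify the mapping cone. Because $\nu_{2n+2j}$ is the compressed attaching map, the mapping cone of $r\circ q\circ\nu_{2n+2j}$ is, up to $p$-local equivalence, a subquotient of $\Sigma\CP^{n+j}$ with exactly two cells, in dimensions $2m+1$ and $2n+2j+1 = 2m+2p-1$. This is the step that needs the most care: one has to check that collapsing the lower cells via $q$ and projecting via $r$ does not change the $\Sp^1$-action between $\sigma t^{m}$ and $\sigma t^{n+j}$. I would handle this by naturality of $\Sp^1$ under the maps of cofibers induced by $q$ and $r$, together with the splitting $s$, which guarantees that the cells being discarded are wedge summands.

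Finally, I compute $\Sp^1$. In $\Hmr^*(\CP^\infty;\FF_p)$ we have $\Sp^1(t^m) = m\,t^{m+p-1}$, and $m+p-1 = n+j$. Therefore $\Sp^1(\sigma t^m) = m\,\sigma t^{n+j}$ in the two-cell complex. Since $\alpha_1$ is characterized as the element whose two-cell cone has $\Sp^1$ acting by $1$ on the bottom class, the composite equals $m\,\alpha_1 = (n-1-(p-2-j))\,\alpha_1$. The main obstacle is the normalization in the first and third steps. One must make sure that the chosen lift $\nu_{2n+2j}$ corresponds to the generator $\omega_{n+j}$ on the nose, not merely up to a unit of $\ZZ/p$, and that $\alpha_1$ is normalized by $\Sp^1$ with coefficient $1$. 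I would fix these conventions using Hamanaka's computation in \cite{Ha07}, where the same $\Sp^1$ argument appears in the case treated there.
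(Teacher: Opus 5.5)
Your proposal is correct and follows essentially the same route as the paper. The paper likewise invokes \cite[Lemma~8]{Ha07} to see that the attaching map of the top cell of $\Sigma\CP^{n+j}$ factors through $\nu_{2n+2j}$. It then reads off the coefficient of $\alpha_1$ from $\Sp^1(\sigma x^{m}) = m\,\sigma x^{n+j}$ with $m = n-1-(p-2-j)$; your two-cell subquotient argument and your normalization remarks only make explicit what the paper leaves implicit.

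One small slip: $\omega_{n+j}$ comes via the connecting map from the $(2n+2j+1)$-cell, i.e.\ the fundamental class of $\Smr^{2n+2j+1}=\SU(n+j+1)/\SU(n+j)$, not from the bottom cell of $\SU(n+j+1)/\SU(n)$. Your ``equivalently'' reformulation via the attaching map $\phi$ is the correct one, and it is what the rest of your argument uses.
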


\begin{proof}
Since $\lambda \in \pi_{2p-3}^s \cong \ZZ/p \cdot \alpha_1$, we know $\lambda = m\alpha_1$ for some $m\in \ZZ/p$. By \cite[Lemma~8]{Ha07}, the attaching map of the top cell of $\Sigma \CP^{n+j}$ factors through $\nu_{n+j}$, so the coefficient $m$ can be detected by the action of the Steenrod power $\Sp^1$ on the cohomology class $\sigma x^{n-1-(p-2-j)}\in \Hmr^{2n-1-2(p-2-j)}(\Sigma \CP^{n+j})$. 
Since $\Sp^1(\sigma x^{n-1-(p-2-j)}) = (n-1-(p-2-j)) \sigma x^{n+j}$, we conclude that $m = (n-1-(p-2-j))$.
\end{proof}

\begin{prop} \label[prop]{f}
The map $f$ equals $i! \, \tilde{\omega}_{n+N-(p-2)}$, where 
\[
\tilde{\omega}_{n+N-(p-2)} \in \pi_{2n+2N-(2p-4)}(\Omega\V_n)
\]
 is the lift of $\omega_{n+N-(p-2)}\in \pi_{2n+2N-(2p-4)}(\SU(n))$ so that it pulls back the cohomology class 
 \[
 a_{2n+2N-(2p-4)}\in \Hmr^{2n+2N-(2p-4)}(\Omega\V_n)
 \]
  to the standard class 
  \[
  \iota_{2n+2N-(2p-4)}\in \Hmr^{2n+2N-(2p-4)}(\Smr^{2n+2N-(2p-4)}).
  \]
\end{prop}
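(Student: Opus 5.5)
We prove \Cref{f} by computing $f$ in cohomology. Recall $f = g\circ s$ lands in $\pi_{2n+2N-(2p-4)}(\Omega\V_n)$, with $2n+2N-(2p-4) = 2(n+N-(p-2))$ lying between $2n$ and $2n+2(p-2)$. In this range, $p$-locally, $\Omega\V_n$ has cells only in even degrees $2n, 2n+2, \dots$. The first $\alpha_1$-attaching map occurs in dimension $\geq 2n+2p-2$. Hence the Hurewicz map followed by evaluation on $a_{2n+2N-(2p-4)}$ detects $\pi_{2n+2N-(2p-4)}(\Omega\V_n)_{(p)}$, at least modulo the image of lower cells. More precisely, we first show $\pi_{2n+2N-(2p-4)}(\Omega\V_n)\cong\ZZ_{(p)}$, generated by $\tilde{\omega}_{n+N-(p-2)}$, with $a_{2n+2N-(2p-4)}$ pulling back to $\iota$. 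This uses the fact that the relevant skeleton of $\Omega\V_n$ is $p$-locally a wedge of spheres, which follows from the same dimension count as in \Cref{factor1}. It then suffices to show
\[
f^*(a_{2n+2N-(2p-4)}) = i!\,\iota_{2n+2N-(2p-4)}.
\]

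Next we compute $f^* = s^*g^*$. Since $h = g\circ(\id\sma q)$ and $q^*$ is injective on the cohomology of $\Sigma\CP^{n-1-(p-2-j)}$ in degrees at least the bottom cell of the stunted piece, it suffices to compute $h^*(a_{2n+2N-(2p-4)})$. Here $h = \tilde c\circ(\epsilon_i\sma l)$. We use the Hamanaka--Kono formula \cite[Proposition 5.2]{HK03}, adapted to $\SU(n)$ and $\V_n=\SU/\SU(n)$:
\[
\tilde c^*(a_{2m}) = \sum_{a+b=m-1} x_{2a+1}\otimes x_{2b+1}.
\]
Pulling back along $\epsilon_i\sma l$, only the term $x_{2i+1}\otimes x_{2b+1}$ survives on $\Smr^{2i+1}$, because $\epsilon_i^*x_{2i+1} = i!\,\iota_{2i+1}$. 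This is the standard fact that the generator of $\pi_{2i+1}\SU(n)$ has degree $i!$ on the primitive generator, as used for $\epsilon$ in \Cref{M1basisnodd}. The other factor $l^*x_{2b+1} = \sigma t^{b}$ is the suspended generator in $\Sigma\CP^{n-1-(p-2-j)}$. With $m = n+N-(p-2)$, we have $b = n+N-(p-2)-1-i = n-1+j-(p-2)$, which is precisely the top cell of $\Sigma\CP^{n-1-(p-2-j)}$. So
\[
h^*(a_{2m}) = i!\,\iota_{2i+1}\otimes\sigma t^{\,n-1-(p-2-j)}.
\]
This is the class pulled back from the top cell via $\id\sma q$. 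Therefore $g^*(a_{2m}) = i!\,\iota\otimes(\text{top class})$, and $s^*$ sends this to $i!\,\iota_{2n+2N-(2p-4)}$, since $s$ is the inclusion of the top wedge summand, split from $r$.

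The main obstacle is the identification step: we must check that a homotopy class in $\pi_{2m}(\Omega\V_n)$ is determined ($p$-locally) by its value on $a_{2m}$. Concretely, we need that the Hurewicz image of $\tilde\omega_{n+N-(p-2)}$ is exactly the dual of $a_{2m}$ with coefficient $1$, and that no torsion appears. We plan to argue this with the $p$-local cell structure of $\Omega\V_n$ in degrees below $2n+2p-3$. That structure is a wedge of even spheres $\Smr^{2n}\vee\cdots\vee\Smr^{2n+2p-4}$, because the first possible $\Sp^1$-attachment spans $2p-2$ dimensions. Given this, $\pi_{2m}(\Omega\V_n)_{(p)}\cong\ZZ_{(p)}$ for $m$ in our range, and $f = i!\,\tilde\omega_{n+N-(p-2)}$ follows.
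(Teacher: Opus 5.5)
Your proposal is correct and follows the paper's proof: reduce $f^*a_{2n+2N-(2p-4)}$ to $h^*a_{2n+2N-(2p-4)}$ using that $\id\sma q$ and $\id\sma s$ induce isomorphisms in that degree, then evaluate $\tilde{c}^*$ with the Hamanaka(--Kono) formula, where only the term $x_{2i+1}\otimes x_{2(n-1-(p-2-j))+1}$ survives, giving $i!\,\iota$. The paper leaves implicit the detection step you flag, namely that a class in $\pi_{2n+2N-(2p-4)}(\Omega\V_n)_{(p)}$ is determined by its pullback of $a_{2n+2N-(2p-4)}$; your cell-structure argument supplies it correctly, because $p\leq n+1$ keeps the relevant skeleton below dimension $4n-1$, so no Whitehead products or unstable classes of even spheres interfere.
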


\begin{proof}
We compute $f^*a_{2n+2N-(2p-4)}$. Note that $\id\sma r$ in diagram \eqref{eq:diagram} induces a cohomology isomorphism in degree $2n+2N-(2p-4)$, sending the standard class $\iota_{2n+2N-(2p-4)}$ to the class $\iota_{2i+1}\otimes \sigma x^{2n-2+2j-(2p-4)}$. The inverse of $(\id\sma r)^*$ is precisely $(\id \sma s)^*$. Since $f$ is defined as $g\circ (\id\sma s)$, $f^* = (\id\sma s)^*\circ g^*$. Because $(\id\sma s)^*$ is an isomorphism, it suffices to compute $g^*a_{2n+2N-(2p-4)}$. Similarly, because $\id\sma q$ induces a cohomology isomorphism in degree $2n+2N-(2p-4)$, it suffices to compute $h^*a_{2n+2N-(2p-4)}$. We find that
\begin{align*}
h^*(a_{2n+2N-(2p-4)}) &= ((\epsilon_i \sma l)^* \circ \tilde{c}^*)(a_{2n+2N-(2p-4)}) \\
 &= (\epsilon_i \sma l)^*\left(\sum_{u+v = n+N-(p-1)}x_{2u+1}\otimes x_{2v+1}\right) \\
 &= \epsilon_i^*(x_{2i+1}) \otimes \sigma x_{2n-2+2j-(2p-4)} \\
 &= i! \, \iota_{2i+1} \otimes \sigma x_{2n-2+2j-(2p-4)},
\end{align*}
where the second equality follows from \cite[Proposition~4]{Ha07}.
It follows that 
\[
f^*a_{2n+2N-(2p-4)} = i! \, \iota_{2n+2N-(2p-4)},
\]
and hence that $f = i! \, \tilde{\omega}_{n+N-(p-2)}$.
\end{proof}

We are now ready to prove the main result of this section.

\begin{proof}[Proof of \Cref{thm:Sam}]
By \Cref{factor1}, \Cref{lift} and \Cref{factor2}, all triangles in diagram \eqref{eq:diagram} commute except the one at the top right. However, recall that we do have $f\circ \lambda = g\circ \mu$, so the commutativity of the bottom part of diagram \eqref{eq:diagram} implies that $\left\langle \epsilon_i, \omega_{2n+2j} \right\rangle = c\circ (\epsilon_i \sma \omega_{2n+2j})$ is exactly the composition of $f\circ \lambda$ with the standard map $\Omega\V_n\rightarrow \SU(n)$. It then follows from \Cref{lambda} and \Cref{f} that $\left\langle \epsilon_i, \omega_{2n+2j} \right\rangle = i! \, (n-1-(p-2-j)) \, \omega_{n+N-(p-2)} \circ \alpha_1$.
\end{proof}



\subsection{Homotopy types of unstable \texorpdfstring{$\SU(n)$}{SUn}-gauge groups over odd spheres} \label{subsec:application}

We now apply \Cref{thm:Sam} to obtain lower bounds for the number of homotopy types of $\SU(n)$-gauge groups over odd spheres $\Smr^{2m+1}$, in the unstable range $m\geq n$. 

\smallskip

Write $m=n+j$, $j\geq 0$. The principal $\SU(n)$-bundles over $\Smr^{2n+2j+1}$ are classified as
\[
\Prin_{\SU(n)}(\Smr^{2n+2j+1}) \cong [\Smr^{2n+2j+1}, \BSU(n)] \cong \pi_{2n+2j}\SU(n).
\]
For $\sfP \in \Prin_{\SU(n)}(\Smr^{2n+2j+1})$, write $f_{\sfP} \in \pi_{2n+2j}\SU(n)$ for the corresponding classifying map. Denote by $\Gcal_{\sfP}$ for the gauge group of $\sfP$.
Recall that we have the fiber sequence:
\begin{equation} \label{eq:funfib}
\begin{tikzcd}
\Gcal_{\sfP} \ar[r] & \SU(n) \ar[r, "\partial_{\sfP}"'] & \Omega_0^{2n+2j}\SU(n) \ar[r] & \B\Gcal_{\sfP} \ar[r] & \BSU(n)
\end{tikzcd}
\end{equation}
where the connecting map $\partial_{\sfP}$ is the adjoint of the Samelson product
\[
\begin{tikzcd}
\left\langle f_{\sfP}, 1_{\SU(n)}  \right\rangle: \Smr^{2n+2j} \sma \SU(n) \ar[rr, "f_{\sfP}\sma 1_{\SU(n)}"'] & &  \SU(n)\sma\SU(n) \ar[r, "c"'] & \SU(n).
\end{tikzcd}
\]

\begin{thm} \label{thm:appl}
Let $p$ be an odd prime so that $j \leq p-2$. Suppose that $p\leq n+1$, and that the value of $n$ mod $p$ belongs to the set $\{0, p-1, \cdots, p-j\}$. There are at least two different $p$-local homotopy types of $\Gcal_{\sfP}$, as $\sfP$ ranges over principal $\SU(n)$-bundles over $\Smr^{2n+2j+1}$.
\end{thm}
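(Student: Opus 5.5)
We compare the trivial bundle $\sfP_0$, whose classifying map is $0$, with the bundle $\sfP_1$ whose classifying map is the generator $f_{\sfP_1}=\omega_{n+j}$ of $\pi_{2n+2j}\SU(n)_{(p)}\cong \ZZ/p^{\nu_p((n+j)!)}$. This group is nonzero because $p\le n+1\le n+j+1$; if $p = n+j+1$ we still need $p\mid (n+j)!$, so we handle this via the hypothesis $p \le n+1$ together with the congruence condition, and in any case we use only that $\omega_{n+j}$ exists. The gauge group of the trivial bundle is $\Gcal_{\sfP_0}\simeq \SU(n)\times\Omega^{2n+2j+1}\SU(n)$, since $\partial_{\sfP_0}$ is null. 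We detect a difference by mapping a sphere into the fiber sequence \eqref{eq:funfib}. The invariant is the order of $\pi_{2i+1}$ of $\B\Gcal_{\sfP}$, or equivalently of $\Gcal_\sfP$, in a suitable degree, computed from the exact sequence
\[
\pi_{2i+1}\SU(n)\xrightarrow{(\partial_\sfP)_*}\pi_{2i+1}\Omega^{2n+2j}_0\SU(n)=\pi_{2n+2N+1}\SU(n)\longrightarrow \pi_{2i+1}\B\Gcal_\sfP\longrightarrow \pi_{2i+1}\BSU(n)\xrightarrow{(\partial_\sfP)_*}\pi_{2i}\Omega^{2n+2j}_0\SU(n),
\]
where $N=i+j$. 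By Lang's identification, $(\partial_\sfP)_*(\epsilon_i)=\pm\langle \omega_{n+j},\epsilon_i\rangle=\pm\langle\epsilon_i,\omega_{n+j}\rangle$.

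Next, choose $i$ so that \Cref{thm:Sam} applies. The congruence hypothesis gives $n\equiv -r \pmod p$ for some $0\le r\le j$, where $r=0$ corresponds to $n\equiv 0$. We want $N=i+j$ with $n\equiv p-2-N\pmod p$, that is, $N\equiv r-2 \pmod p$. So take $N=p-2+r$, which lies in $\{p-2,\dots,2p-4\}$ because $r\le j\le p-2$. This requires $i=p-2+r-j$. We have $i\ge 1$ exactly when $j-r\le p-3$, and $i\le n-1$ holds since $p\le n+1$ forces $p-2\le n-1$. (If $j=p-2$ and $r=0$ we get $i=0$, which needs separate handling or a different choice of $r$. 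I expect the allowed range to be arranged so that this is fine; otherwise this boundary case is treated separately.) \Cref{thm:Sam} then gives
\[
\langle\epsilon_i,\omega_{n+j}\rangle=i!\,(n-1-(p-2-j))\,\omega_{n+r}\circ\alpha_1 .
\]
Here $i\le p-2$, so $i!$ is a $p$-local unit. Also $n-1-(p-2-j)\equiv -r+1+j+2-p\equiv j-r+1 \pmod p$, and $1\le j-r+1\le p-1$, so this coefficient is a unit too. By \Cref{factor1} the target $\pi_{2n+2N+1}\SU(n)\cong\ZZ/p$ is generated by $\omega_{n+r}\circ\alpha_1$. Hence $(\partial_{\sfP_1})_*$ is surjective onto $\ZZ/p$ in this degree, while $(\partial_{\sfP_0})_*=0$.

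Finally, compare orders. The cokernel term is $\ZZ/p$ for $\sfP_0$ and $0$ for $\sfP_1$. The next term, $\pi_{2i+1}\BSU(n)=\pi_{2i}\SU(n)$, is $0$ $p$-locally for $i\le n-1$, because $\pi_{\mathrm{even}}\SU(n)$ vanishes below degree $2n$. Therefore $\pi_{2i+1}\B\Gcal_{\sfP_0}\cong\ZZ/p$ while $\pi_{2i+1}\B\Gcal_{\sfP_1}=0$, and the two gauge groups are not $p$-locally homotopy equivalent. The step I expect to be most delicate is this index bookkeeping: making sure that $i$ lands in $[1,n-1]$ for every allowed residue, and checking that the neighbouring homotopy groups really vanish so the comparison is clean. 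The Samelson computation itself is supplied by \Cref{thm:Sam}.
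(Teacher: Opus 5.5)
Your proposal is the paper's proof, step for step. You compare the trivial bundle with the bundle classified by $\omega_{n+j}$, use the test sphere $\Smr^{2i+1}$ with $N=p-2+r$ and $i=N-j$, invoke \Cref{thm:Sam} and \Cref{factor1}, and make the same unit check, $n-1-(p-2-j)\equiv j-r+1\equiv -(i+1) \pmod p$. Whenever $i\ge 1$, your conclusion ($\pi_{2i+1}\B\Gcal_{\sfP_0}\cong\ZZ/p$ versus $\pi_{2i+1}\B\Gcal_{\sfP_1}=0$, $p$-locally) is correct. Your worry about $\omega_{n+j}$ being nontrivial is unnecessary: the hypotheses give $p\le n+j$, from $p\le n+1$ if $j\ge 1$ and from $p\mid n$ if $j=0$.

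However, the boundary case you left open is a genuine gap, and the hypotheses do not rule it out. Take $j=p-2$ and $n\equiv 0\pmod p$, which is allowed: for example $p=3$, $j=1$, $3\mid n$ (as in \Cref{corank2}, or the entry $3\in C(15,1)$), or $p=5$, $j=3$, $n=15$ (the entry $C(15,3)=\{5\}$).

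\begin{itemize}
\item Here $N$ is forced to be $p-2$, since it is unique in $\{p-2,\dots,2p-4\}$. So $i=0$, and no other choice of $r$ or $N$ is available.
\item The test sphere is then $\Smr^1$. Since $\pi_1\SU(n)=0$, the exact sequence gives $\pi_1\B\Gcal_{\sfP}\cong\pi_{2n+2N+1}\SU(n)\cong\ZZ/p$ for \emph{every} $\sfP$, so nothing is detected.
\item \Cref{thm:Sam} does not apply, because $\epsilon_i$ exists only for $i\ge 1$. Its proof uses $\epsilon_i^*x_{2i+1}=i!\,\iota_{2i+1}$, and $\Hmr^*(\SU(n))$ has no class $x_1$.
\end{itemize}

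The paper's own proof has the same hole. It writes $0\le i\le p-2$ and then applies \Cref{thm:Sam} to $\epsilon_0$, which is the zero element of $\pi_1\SU(n)$. So this argument, yours or the paper's, proves the theorem only under the extra hypothesis $i\ge 1$, that is, excluding $j=p-2$ with $p\mid n$. That case would need a different detecting test complex, or a Samelson product outside the range of \Cref{thm:Sam}.
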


\begin{proof}
Since the value of $n$ mod $p$ belongs to the set $\{0, p-1, \cdots, p-j\}$, there is a unique $N\in \{p-2, p-1, \cdots, p-2+j\}$ so that $n \equiv p-2-N$ mod $p$. Let $i:= N-j$. Note that $i$ satisfies $0\leq i \leq p-2$. Since $p\leq n+1$ is assumed, we also have $i\leq p-2\leq n-1$. We use $\Smr^{2i+1}$ as the test complex. Mapping $\Smr^{2i+1}$ to the fiber sequence \eqref{eq:funfib}, we obtain an exact sequence
\[
\begin{tikzcd}
\pi_{2i+1}\SU(n) \ar[r, "(\partial_{\sfP})_*"'] & \pi_{2i+1}\Omega^{2n+2j}\SU(n) = \pi_{2n+2N+1}\SU(n) \ar[r] & \pi_{2i+1}\B\Gcal_{\sf{P}} \ar[r] & 0
\end{tikzcd}
\]
where the connecting homomorphism $(\partial_{\sfP})_*$ is such that
\[
\left( \alpha\in \pi_{2i+1}\SU(n) \right) \longmapsto \left( \left\langle \alpha, f_{\sfP} \right\rangle \in \pi_{2n+2N+1}\SU(n)  \right).
\]
Note that the last group in the exact sequence is zero, because $\pi_{2i+1}\BSU(n) = 0$ for $i\leq n-1$.
Write $\Gcal_0$ for the gauge group of the trivial bundle, and $\Gcal_1$ for the gauge group of the bundle classified by (any integral lift of) $\omega_{n+j}$. Write $\partial_0$ and $\partial_1$ for the connecting maps, respectively. To prove our results it suffices to show that $\Gcal_0 \not\simeq \Gcal_1$.
If $\Gcal_0 \simeq \Gcal_1$ then $\pi_{2i+1}\B\Gcal_{0} \cong \pi_{2i+1}\B\Gcal_{1}$, and $\Im (\partial_0)_*$ and $\Im (\partial_1)_*$ must therefore be of the same size. 
Since $(\partial_0)_*$ is the zero homomorphism, it suffices to show that there is some $\alpha\in \pi_{2i+1}\SU(n)$ so that $(\partial_1)_*(\alpha) \neq 0$. We claim that the generator $\epsilon_i \in \pi_{2i+1}\SU(n)$ can serve as such an $\alpha$. Indeed, $p$-locally we have
\[
(\partial_1)_*(\epsilon_i) = \left\langle \epsilon_i, \omega_{n+j} \right\rangle,
\]
which by \Cref{thm:Sam} equals
\[
i! \, (n-1-(p-2-j)) \, \omega_{n+N-(p-2)} \circ \alpha_1.
\]
Recall from \Cref{factor1} that $\pi_{2n+2N+1}\SU(n)$ is $p$-locally generated by $\omega_{n+N-(p-2)} \circ \alpha_1$, and note that
\[
i! \, (n-1-(p-2-j)) \equiv i! \, ((p-2-N)-1-(p-2-j)) \equiv i!\, (j-N-1) \equiv i! \, (-i-1) \text{ mod } p.
\]
Recall that $i \leq p-2$, so the value $i! \, (-i-1)$ is not divisible by $p$. It follows that $(\partial_1)_*(\epsilon_i) = \left\langle \epsilon_i, \omega_{n+j} \right\rangle \neq 0$.  
\end{proof}

\begin{defn} \label[defn]{count}
Let $(n, j)$ be a pair of integers with $n\geq 3$ and $j\geq 0$.
Define $C(n, j)$ to be the set of odd primes satisfying that $p\geq j+2$, that $p\leq n+1$, and that the value of $n$ mod $p$ belongs to the set $\{0, p-1, \cdots, p-j\}$.
Write $\theta(n, j)$ for the size of the finite set $C(n, j)$.
\end{defn}

\begin{thm} \label{thm:counting}
There are at least $2^{\theta(n, j)}$ distinct integral homotopy types of gauge groups $\Gcal_{\sfP}$, as $\sfP$ ranges over the principal $\SU(n)$-bundles over $\Smr^{2n+2j+1}$.
\end{thm}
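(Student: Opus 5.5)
The plan is to combine the $p$-local distinctions from \Cref{thm:appl} over all primes $p\in C(n,j)$ by choosing bundles whose classifying maps are trivial or nontrivial independently at each such prime. Write $G:=\pi_{2n+2j}\SU(n)\cong \Prin_{\SU(n)}(\Smr^{2n+2j+1})$, a finite abelian group. It decomposes as $G\cong\bigoplus_q G_{(q)}$ over primes $q$, and for $p\in C(n,j)$ the $p$-primary part $G_{(p)}$ is cyclic of order $p^{\nu_p((n+j)!)}$, generated by $\omega_{n+j}$. Since $p\le n+1\le n+j$, this order is nontrivial.

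For each subset $S\subseteq C(n,j)$, choose $f_S\in G$ whose $p$-primary component is $\omega_{n+j}$ for $p\in S$ and $0$ for $p\in C(n,j)\setminus S$. The Chinese remainder theorem, or simply the direct sum decomposition, provides such an element. Let $\sfP_S$ be the corresponding bundle. The claim is that $\Gcal_{\sfP_S}\simeq\Gcal_{\sfP_T}$ forces $S=T$, which gives at least $2^{\theta(n,j)}$ homotopy types.

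Suppose $S\neq T$, and pick $p$ in their symmetric difference, say $p\in S\setminus T$. An integral homotopy equivalence $\Gcal_{\sfP_S}\simeq\Gcal_{\sfP_T}$ induces $\B\Gcal_{\sfP_S}\simeq\B\Gcal_{\sfP_T}$, so $\pi_{2i+1}\B\Gcal$ agree, and in particular agree after $p$-localization. Here $N$ and $i=N-j$ are chosen as in the proof of \Cref{thm:appl}. Localizing the exact sequence from that proof at $p$ is exact, so the $p$-local images of $(\partial_{\sfP_S})_*$ and $(\partial_{\sfP_T})_*$ in $\pi_{2n+2N+1}\SU(n)_{(p)}\cong\ZZ/p$ (\Cref{factor1}) must have the same size.

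The key point is that the $p$-localization of the Samelson product $\langle\epsilon_i, f\rangle$ depends only on the image of $f$ in $G\otimes\ZZ_{(p)}=G_{(p)}$. This holds because the Samelson product is bilinear and the components of $f$ at primes $q\neq p$ have order prime to $p$, so they map to zero in a $p$-local group. Hence $\langle\epsilon_i,f_T\rangle_{(p)}=\langle\epsilon_i,0\rangle=0$. On the other side, $\langle\epsilon_i,f_S\rangle_{(p)}=\langle\epsilon_i,\omega_{n+j}\rangle$, which is nonzero by \Cref{thm:Sam} and the computation in \Cref{thm:appl} showing $i!\,(-i-1)\not\equiv 0$ mod $p$.

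One point still needs checking. Within the image of $(\partial_{\sfP})_*$ we must use all of $\pi_{2i+1}\SU(n)\cong\ZZ$, not only $\epsilon_i$. This is harmless: the image is generated by $\langle\epsilon_i,f\rangle$, and its size is determined by that generator. So one image is $0$ and the other is $\ZZ/p$, a contradiction.

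The main obstacle I expect is justifying that the $p$-local Samelson product depends only on the $p$-primary component. This matters because \Cref{thm:Sam} is stated for the $p$-local generator $\omega_{n+j}$, while the bundle is integral. I would handle it by linearity of $\langle\epsilon_i,-\rangle:G\to\pi_{2n+2N+1}\SU(n)$ together with localization. Some care is needed that the unit multiple of $\omega_{n+j}$ appearing as a $p$-local image of an integral lift still gives a nonzero product. It does, because multiplying by a $p$-local unit preserves nonvanishing in $\ZZ/p$.
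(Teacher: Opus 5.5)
Your proof is correct and is essentially the paper's. You toggle the classifying class independently at each $p\in C(n,j)$ through the primary decomposition of $\pi_{2n+2j}\SU(n)$, where the paper uses a Chinese-remainder choice of $k$ in $k\omega_{n+j}$. You then separate the gauge groups $p$-locally by the order of $\pi_{2i+1}\B\Gcal_{\sfP}$, using the nonvanishing of $\langle\epsilon_i,\omega_{n+j}\rangle$ from \Cref{thm:appl}; your handling of integral versus $p$-local generators is, if anything, more careful than the paper's ``integral lift of $k\omega_{n+j}$''.

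One caveat you inherit from the paper: when $p=j+2$ and $p\mid n$ (e.g.\ $p=3$, $j=1$, $3\mid n$), one gets $i=N-j=0$. Since $\pi_1\SU(n)=0$ there is no $\epsilon_0$, so $\pi_1\B\Gcal_{\sfP}\cong\pi_{2n+2N+1}\SU(n)$ for every $\sfP$, and the argument of \Cref{thm:appl}, hence both proofs, gives no information at such primes.
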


\begin{proof}
Let $p_1, p_2, \cdots, p_{\theta(n, j)}$ be a complete list of elements of $C(n, j)$ in increasing order.
For any $\theta(n, j)$-tuple $\vec{a} = (a_1, a_2, \cdots, a_{\theta(n, j)})$ where each $a_i$ equals $0$ or $1$, by the Chinese remainder theorem there is a unique integer $k$, $0\leq k < \prod_{1\leq i \leq \theta(n, j)} p_i$, so that $k \equiv a_i$ mod $p_i$ for $i=1, \cdots, \theta(n, j)$. Write $\Gcal_k$ for the gauge group of the principal $\SU(n)$-bundle over $\Smr^{2n+2j+1}$ classified by (an integral lift of) $k\omega_{n+j}$. The proof of \Cref{thm:appl} implies that, working \mbox{$p_i$-locally},
\[
(\partial_k)_*(\epsilon_i) = \left\langle \epsilon_i, k\omega_{n+j}  \right\rangle = k \left\langle \epsilon_i, \omega_{n+j}  \right\rangle = i! \, (n-1-(p_i-2-j)) \, k \, \omega_{n+N-(p-2)} \circ \alpha_1,
\]
which is nonzero mod $p_i$ precisely when $a_i = 1$. So if vectors $\vec{a}$ and $\vec{a}'$ differ in the $i$-th digit, then the corresponding gauge groups $\Gcal_k$ and $\Gcal_{k'}$ have different $p_i$-local homotopy types. Since there are $2^{\theta(n, j)}$ choices of the vector $\vec{a}$, we obtain $2^{\theta(n, j)}$ homotopy types of $\Gcal_k$.
\end{proof}

\smallskip

\begin{ex}
Consider the principal $\SU(5)$-bundles over $\Smr^{11}$, which are classified as
\[
\Prin_{\SU(5)}(\Smr^{11}) \cong \pi_{10}\SU(5) \cong \ZZ/5!.
\]
So up to isomorphism there are $5! = 120$ such bundles. We now apply \Cref{thm:counting} with 
$n=5$ and $j=0$. We find that
\[
C(5, 0) = \{5\}, \quad \text{and} \quad \theta(5, 0) = 1.
\]
So by \Cref{thm:counting} these $120$ bundles give rise to at least $2^1 = 2$ distinct homotopy types of gauge groups. In the next unstable case, the principal $\SU(4)$-bundles over $\Smr^{11}$ are classified as
\[
\Prin_{\SU(4)}(\Smr^{11}) \cong \pi_{10}\SU(4) \cong \ZZ/5! \oplus \ZZ/2.
\]
So up to isomorphism there are $240$ such bundles. With $n=4$ and $j=1$, we have
\[
C(4, 1) = \{5\}, \quad \text{and} \quad \theta(4, 1) = 1.
\]
By \Cref{thm:counting} these $240$ bundles give rise to at least $2^1=2$ distinct homotopy types of gauge groups. Note that \Cref{thm:counting} does not detect $\SU(3)$-gauge groups over $\Smr^{11}$, as $C(3, 2) = \emptyset$.
\end{ex}

\smallskip

In the following examples, we denote by $L_{n, j}$ the lower bound for the number of homotopy types  $\SU(n)$-gauge groups over $\Smr^{2n+2j+1}$, obtained by applying \Cref{thm:counting}.

\smallskip

\begin{ex}
Consider the $\SU(n)$-bundles over $\Smr^{31}$, in the unstable range $n\leq 15$. In these cases $n+j = 15$. Applying \Cref{thm:counting} gives the following results.

\smallskip

\begin{center}
\begin{tabular}{c||c|c|c|c|c|c|c}
\hline
$n$ &  $15$ & $14$ & $13$ & $12$ & $11$ & $10$ & $\leq 9$ \\
\hline
$C(n, j)$ &  $\{3, 5\}$ & $\{3, 5, 7\}$ & $\{5, 7, 13\}$ & $\{5, 7, 13\}$ & $\{7, 11\}$ & $\{7, 11\}$ & $\emptyset$ \\
\hline
$L_{n, j}$ &  $4$ & $8$ & $8$ & $8$ & $4$ & $4$ & $1$ \\
\hline
\end{tabular}
\end{center}
\end{ex}

\smallskip

\begin{ex}
Fix $n=15$ and consider the $\SU(15)$-bundles over $\Smr^{31+2j}$, in the unstable range $j\geq 0$.
Applying \Cref{thm:counting}, we obtain the following results.

\smallskip

\begin{center}
\begin{tabular}{c||c|c|c|c|c|c|c}
\hline
$j$ &  0, 1 & 2, 3 & 4-6 & 7-9 & 10 & 11 & $\geq 12$ \\
\hline
$C(15, j)$ &  $\{3, 5\}$ & $\{5\}$ & $\emptyset$ & $\{11\}$ & $\emptyset$ & $\{13\}$ & $\emptyset$ \\
\hline
$L_{15, j}$ &  $4$ & $2$ & $1$ & $2$ & $1$ & $2$ & $1$  \\
\hline
\end{tabular}
\end{center}
\end{ex}

\smallskip

\begin{ex} \label[ex]{corank1}
Consider the $\SU(n)$-bundles over $\Smr^{2n+1}$ for a general $n\geq 3$, classified as
\[
\Prin_{\SU(n)}(\Smr^{2n+1}) \cong \pi_{2n}\SU(n) \cong \ZZ/n!.
\]
So up to isomorphism there are $n!$ such bundles. Applying \Cref{thm:counting} with this general $n$ and $j=0$, we find that
\[
C(n, 0) = \{p: p \text{ is an odd prime dividing } n \}.
\]
So if $\omega(n)$ denotes the number of distinct prime divisors of $n$, then
\[
\theta(n, 0) = \begin{cases}
 \omega(n) & \text{if } n \text{ is odd} \\
 \omega(n) - 1 & \text{if } n \text{ is even}.
\end{cases}
\]
Therefore the number of homotopy types of $\SU(n)$-gauge groups over $\Smr^{2n+1}$ is at least
\[
 L(n, 0) = \begin{cases}
 2^{\omega(n)} & \text{if } n \text{ is odd} \\
 2^{\omega(n) - 1} & \text{if } n \text{ is even}.
\end{cases}
\]
\end{ex}

\smallskip

\begin{ex} \label[ex]{corank2}
Consider the $\SU(n)$-bundles over $\Smr^{2n+3}$ for a general $n\geq 3$, classified as
\[
\Prin_{\SU(n)}(\Smr^{2n+3}) \cong \pi_{2n+2}\SU(n) \cong
\begin{cases}
 \ZZ/(n+1)! \oplus \ZZ/2 & \text{if } n \text{ is even} \\
 \ZZ/\frac{(n+1)!}{2} & \text{if } n \text{ is odd}. 
\end{cases} 
\]
Applying \Cref{thm:counting} with this general $n$ and $j=1$, we find that
\[
C(n, 1) = \{p: p \text{ is an odd prime dividing } n  \text{ or } n+1\}.
\]
Note that $n$ and $n+1$ are necessarily coprime, so
\begin{align*}
\theta(n, 1) & = \#\{\text{odd primes dividing } n\} + \#\{\text{odd primes dividing } n+1\} \\
 & = \omega(n) + \omega(n+1) - 1.
\end{align*}
The last equality follows because exactly one of $n$ and $n+1$ is even.
Therefore the number of homotopy types of $\SU(n)$-gauge groups over $\Smr^{2n+3}$ is at least
\[
 L(n, 1) = 2^{\omega(n) + \omega(n+1) - 1}.
\]
\end{ex}

\smallskip


\bibliographystyle{amsalpha}

\bibliography{GG1}

@article {AB83,
    AUTHOR = {M.F. Atiyah and R. Bott},
     TITLE = {The {Y}ang-{M}ills equations over {R}iemann surfaces.},
   JOURNAL = {Philos. Trans. Roy. Soc. London Ser. A},
   NUMBER = {1505},
    VOLUME = {308},
      YEAR = {1983},
     PAGES = {523--615}
}

@article {CS00,
    AUTHOR = {M.C. Crabb and W.A. Sutherland},
     TITLE = {Counting homotopy types of gauge groups},
   JOURNAL = {Proc. London Math. Soc. (3)},
   NUMBER = {3},
    VOLUME = {81},
      YEAR = {2000},
     PAGES = {747--768}
}

@article {Go72,
    AUTHOR = {D.H. Gottlieb},
     TITLE = {Applications of bundle map theory},
   JOURNAL = {Trans. Amer. Math. Soc},
    VOLUME = {171},
      YEAR = {1972},
     PAGES = {23--50}
}

@article {Ha07,
    AUTHOR = {H. Hamanaka},
     TITLE = {On {S}amelson products in $p$-localized unitary groups},
   JOURNAL = {Topology Appl.},
   NUMBER = {3},
    VOLUME = {154},
      YEAR = {2007},
     PAGES = {573--583}
}

@article {HK03,
    AUTHOR = {H. Hamanaka and A. Kono},
     TITLE = {On {$[X, U(n)]$ when $\dim X$ is $2n$}},
   JOURNAL = {J. Math. Kyoto Univ.},
   NUMBER = {2},
    VOLUME = {43},
      YEAR = {2003},
     PAGES = {333--348}
}

@article {HK06,
    AUTHOR = {H. Hamanaka and A. Kono},
     TITLE = {Unstable {$K^1$}-group and homotopy type of certain gauge groups},
   JOURNAL = {Proc. Roy. Soc. Edinburgh Sect. A},
   NUMBER = {1},
    VOLUME = {136},
      YEAR = {2006},
     PAGES = {149--155}
}

@article {HK07,
    AUTHOR = {H. Hamanaka and A. Kono},
     TITLE = {Homotopy type of gauge groups of {$SU(3)$}-bundles over {$S^6$}},
   JOURNAL = {Topology Appl.},
   NUMBER = {7},
    VOLUME = {154},
      YEAR = {2007},
     PAGES = {1377--1380}
}

@article {Hu23,
    AUTHOR = {Y. Hu},
     TITLE = {Metastable complex vector bundles over complex projective spaces},
   JOURNAL = {Trans. Amer. Math. Soc.},
   NUMBER = {11},
    VOLUME = {276},
      YEAR = {2023},
     PAGES = {7783--7814}
}

@article {La73,
    AUTHOR = {G.E. Lang},
     TITLE = {The evaluation map and {EHP} sequences},
   JOURNAL = {Pacific J. Math.},
    VOLUME = {44},
      YEAR = {1973},
     PAGES = {201--210}
}

@article {Moh22,
    AUTHOR = {S. Mohammadi},
     TITLE = {The homotopy types of {$SU(n)$}-gauge groups over {$S^{2m}$}},
   JOURNAL = {Homol. Homot. Appl.},
   NUMBER = {1},
    VOLUME = {24},
      YEAR = {2022},
     PAGES = {55-70}
}

@article {MAG19,
    AUTHOR = {S. Mohammadi and M. Asadi-Golmankhaneh},
     TITLE = {The homotopy types of {$SU(4)$}-gauge groups over {$S^8$}},
   JOURNAL = {Topology Appl.},
    VOLUME = {266},
      YEAR = {2019},
     PAGES = {106845}
}

@article {So19,
    AUTHOR = {T. So},
     TITLE = {Homotopy types of gauge groups over non-simply connected closed 4-manifolds},
   JOURNAL = {Glasgow Math. J.},
    VOLUME = {61},
      YEAR = {2019},
     PAGES = {349--371}
}

\end{document}